\newcommand*\mathinhead[2]{\texorpdfstring{$#1$}{#2}}
\newcommand*{\dd}{\mathop{}\!\mathrm{d}}
\newcommand*{\QQ}{\pazocal{Q}}
\newcommand*{\Ps}{\pazocal{P}^s}
\newcommand*{\ox}{\overline{x}}
\newcommand*{\oy}{\overline{y}}
\newcommand*{\oz}{\overline{z}}
\numberwithin{equation}{section}
\newtheoremstyle{note}
{8pt}
{6pt}
{\itshape}
{11.75pt}
{\bfseries}
{.}
{.5em}
{}
\theoremstyle{note}
\newtheorem*{thm*}{Theorem}
\newtheorem{thm}{Theorem}[section]
\newtheorem{lem}[thm]{Lemma}
\newtheorem*{prop*}{Proposition}
\newtheoremstyle{note2}
{8pt}
{1pt}
{}
{11.75pt}
{\bfseries}
{.}
{.5em}
{}
\theoremstyle{note2}
\newtheorem{defn}{Definition}[section]
\newtheoremstyle{note3}
{8pt}
{1pt}
{}
{11.75pt}
{\itshape}
{.}
{.5em}
{}
\theoremstyle{note3}
\newtheorem{rem}{Remark}[section]
\DeclareMathAlphabet{\pazocal}{OMS}{zplm}{m}{n}
\begin{document}
	\title[The fractional Lipschitz caloric capacity of Cantor sets]{The fractional Lipschitz caloric capacity of Cantor sets}
	\author{Joan Hernández}\thanks{ORCID: 0000-0002-2207-5981.\\The author has been supported by PID2020-114167GB-I00 (Mineco, Spain).}
	\begin{abstract}
		We characterize the s-parabolic Lipschitz caloric capacity of corner-like $s$-parabolic Cantor sets in $\mathbb{R}^{n+1}$ for $1/2<s\leq 1$. Despite the spatial gradient of the s-heat kernel lacking temporal anti-symmetry, we obtain analogous results to those known for analytic and Riesz capacities.
		\bigskip
		
		\noindent\textbf{AMS 2020 Mathematics Subject Classification:}  42B20 (primary); 28A12 (secondary).
		
		\medskip
		
		\noindent \textbf{Keywords:} Fractional heat equation, singular integrals, Cantor set.
	\end{abstract}
	
	\maketitle
    \vspace{-1cm}
    \section{Introduction}

    Recent progress in the theory of parabolic equations on time-varying domains has led to major developments, notably through the work of Hofmann, Lewis, Nyström, and Strömqvist \cite{Ho1, HoL, NSt}, among others. As expected, there has also been growing interest in understanding the properties of so-called caloric capacities and the removable singularities of bounded solutions to certain parabolic equations. For example, Mourgoglou and Puliatti \cite{MoPu} studied a specific caloric capacity related to a capacity density condition at a particular scale, enabling them to establish several PDE estimates near the boundary, which are essential for their blow-up-type arguments. Another example is the recent work by Mateu, Prat, and Tolsa \cite{MPr, MPrT}, who investigated removable singularities for Lipschitz caloric functions in terms of capacities, as well as their fractional generalizations, such as those discussed in \cite{H}.

Building upon this existing framework, the main goal of the present article is to estimate the fractional Lipschitz caloric capacities, related to fractional heat equations, of corner-like Cantor sets in $\mathbb{R}^{n+1}$. More precisely, we focus on the fractional variants of the heat equation associated with the following pseudo-differential operator: \begin{equation*} \Theta^s:=(-\Delta_x)^s+\partial_t, \qquad s\in(0,1], \end{equation*} where for $s=1$, we recover the usual heat equation, and $(-\Delta_x)$ denotes the Laplacian with respect to the spatial variables. When $s<1$, $(-\Delta_x)^s$, known as the $s$-fractional Laplacian with respect to the spatial variables, is defined via its Fourier transform: for each $t$ fixed,
\begin{equation*} 
    \pazocal{F}[(-\Delta_x)^{s}f](\xi,t)=|\xi|^{2s}\pazocal{F}[f](\xi,t),
\end{equation*} 
or via the integral representation: 
\begin{align*} 
    (-\Delta_x)^{s} f(x,t)& =c_{n,s}\, \text{p.v.}\int_{\mathbb{R}^n}\frac{f(x,t)-f(y,t)}{|x-y|^{n+2s}}\dd y.
\end{align*} 
For further properties of $(-\Delta_x)^s$, the reader may consult \cite[\textsection{3}]{DPV} and \cite{Ste}, as well as the works of Ros-Oton and Serra \cite{RoSe1, RoSe2} regarding regularity theory for these fractional operators.

In what follows, we denote by $P_s$ the fundamental solution of $\Theta^s$, that is $\Theta^sP_s=\delta_0$ in the distributional sense, where $\delta_0$ denotes the Dirac delta at $0\in\mathbb{R}^{n+1}$. The function $P_s$ can be computed by taking the inverse spatial Fourier transform of $e^{-4\pi^2 t|\xi|^{2s}}$ for $t>0$, and it is null when $t\leq 0$. For $s=1$, $P_s$ coincides with the classical heat kernel, denoted $P_1=W$. When $0<s<1$, an explicit formula for $P_s$ is not available, but Blumenthal and Getoor \cite[Theorem 2.1]{BG} proved that 
\begin{equation*} 
P_s(x,t) \approx_{n,s} \frac{t}{(|x|^2+t^{1/s})^{(n+2s)/2}}\chi_{t>0},
\end{equation*} 
where $\approx_{n,s}$ indicates that $P_s$ is bounded above and below by this quotient up to constants depending only on $n$ and $s$. It is important to notice that for $s=1/2$ the relation above becomes an exact equality (with implicit dimensional constants), since the inverse Fourier transform can be computed explicitly.

To simplify notation, we represent points in $\mathbb{R}^{n+1}$ as $\ox:=(x,t)\in\mathbb{R}^n\times\mathbb{R}$ and define the $s$-parabolic distance for $0<s\leq 1$ as 
\begin{equation*} 
    \text{dist}_{p_s}\big((x,t),(y,\tau)\big):=\max\left\{|x-y|^2,|t-\tau|^{1/s}\right\} \approx_{n,s} \left(|x-y|^2+|t-\tau|^{1/s}\right)^{1/2}.
\end{equation*} 
From this, one naturally defines $s$-parabolic cubes and balls. We also define the $s$-parabolic norm as $|\ox|_{p_s}:= \text{dist}_{p_s}(\ox,0)$, so that
\begin{equation*} 
    P_s(\ox)\approx_{n,s}\frac{t}{|\ox|_{p_s}^{n+2s}}\chi_{t>0}. 
\end{equation*} 
Moreover, as shown in \cite{HMPr}, the kernels $P_s$ and $\nabla_x P_s$ satisfy Calderón-Zygmund (CZ) type estimates of order $n$ and $n+1$, respectively, with respect to the $s$-parabolic norm. Specifically, for $\ox=(x,t)\neq 0$ and $s\in(0,1)$: \begin{align*} 
    |\nabla_x P_s(\ox)|\lesssim \frac{|x t|}{|\ox|_{p_s}^{n+2s+2}}, \qquad |\Delta_x P_s(\ox)|\lesssim \frac{|t|}{|\ox|_{p_s}^{n+2s+2}}, \qquad |\partial_t\nabla_x P_s(\ox)| \lesssim \frac{|x|}{|\ox|_{p_s}^{n+2s+2}}. 
\end{align*} 
The last bound holds only when $t\neq 0$. Furthermore, if $\ox'$ satisfies $|\ox-\ox'|_{p_s}\leq |\ox|_{p_s}/2$, then 
\begin{equation*} 
    |\nabla_x P_s(\ox)-\nabla_xP_s(\ox')|\lesssim \frac{|\ox-\ox'|_{p_s}^{2\zeta}}{|\ox|_{p_s}^{n+1+2\zeta}}, \qquad \text{ where $2\zeta:=\min\{1,2s\}$.} 
\end{equation*}

Motivated by these results, and following the approach of \cite{MPrT}, the author studied, in \cite{H}, the characterization of removable singularities for $s$-parabolic Lipschitz solutions of the fractional heat equation in the regime $1/2<s\leq 1$. In this context, imposing a fractional parabolic Lipschitz condition on solutions is equivalent to requiring that
\begin{equation} 
\label{eq1.1} 
    \|\nabla_x f\|_{L^\infty(\mathbb{R}^{n+1})}<\infty, \qquad \|\partial_t^{\frac{1}{2s}}f\|_{\ast,p_s}<\infty. 
\end{equation} 
That is, the function must be Lipschitz in space and satisfy an $s$-parabolic BMO estimate for the fractional time derivative. A function $f\in L^1_{\text{loc}}$ belongs to the $s$-parabolic BMO space, $\text{BMO}_{p_s}$, if
\begin{equation*} 
    \|f\|_{\ast,p_s}:= \sup_Q \frac{1}{|Q|}\int_Q |f(\ox)-f_Q|\dd\ox<\infty, 
\end{equation*} 
where the supremum is taken over all $s$-parabolic cubes, and $f_Q$ denotes the average of $f$ over $Q$. The fractional time derivative for $s\in(1/2,1]$ is defined by 
\begin{equation*} 
    \partial_t^{\frac{1}{2s}}f(x,t):=\int_\mathbb{R}\frac{f(x,\tau)-f(x,t)}{|\tau-t|^{1+\frac{1}{2s}}}\dd\tau. 
\end{equation*}

In \cite{H}, it is proven that the bounds in \eqref{eq1.1} imply $\|f\|_{\text{Lip}{\frac{1}{2s},t}}<\infty$, meaning that such functions are $(1,\frac{1}{2s})$-Lipschitz: Lipschitz in space and $\frac{1}{2s}$-Lipschitz in time. When $s=1$, this corresponds to the functions studied by Nyström and Strömqvist \cite{NSt}. The motivation for imposing this Lipschitz property stems from the work of Hofmann, Lewis, Murray, and Silver \cite{LS, LMu, HoL, Ho2}, where the relationship between parabolic singular integral operators and caloric layer potentials on graphs is explored. Their analysis suggests that the appropriate graphs to consider are indeed $(1,1/2)$-Lipschitz graphs.

Accordingly, for each $1/2<s\leq 1$, we define the $(1,\frac{1}{2s})$-Lipschitz caloric capacity $\Gamma_{\Theta^s}$ of a compact set $E\subset\mathbb{R}^{n+1}$ as the supremum of $|\langle T,1\rangle|$ over distributions $T$ supported on $E$ such that \begin{equation*} \|\nabla_x P_s\ast T\|_{L^\infty(\mathbb{R}^{n+1})}\leq 1, \qquad \|\partial_t^{\frac{1}{2s}}P_s\ast T\|_{\ast,p_s}\leq 1. \end{equation*} Variants of the capacity arise by restricting $T$ to positive measures or by imposing, additionally, the same normalization conditions on the conjugate kernel $P_s^\ast(\ox):=P_s(-\ox)$, resulting in $\Gamma_{\Theta^s,+}$ and $\widetilde{\Gamma}_{\Theta^s,+}$, respectively.

In \cite{H}, it is shown that $\widetilde{\Gamma}_{\Theta^s,+}$ can be characterized in terms of $L^2$-boundedness of a certain convolution operator and that the nullity of $\Gamma_{\Theta^s}$ characterizes removability for $(1,\frac{1}{2s})$-Lipschitz solutions of the fractional heat equation. It is also established that the critical $s$-parabolic Hausdorff dimension of $\Gamma_{\Theta^s}$ in $\mathbb{R}^{n+1}$ is $n+1$, and a removable fractal set $E_{p_s}$ with positive and finite $\pazocal{H}^{n+1}_{p_s}$ measure is constructed.

Regarding the critical dimension, it is worth noting that the fractional parameter $s$ appears only in defining the $s$-parabolic distance, which determines the corresponding $s$-parabolic Hausdorff dimension. This suggests that as $s\to 1/2$, and the metric approaches the Euclidean one, the critical dimension equals that of the ambient space. Inspired by Uy’s work \cite{U} on analytic capacity, it is conjectured that the capacity $\Gamma_{\Theta^{1/2}}$\footnote{The Lipschitz capacity for the $\Theta^{1/2}$ equation, i.e. that obtained by imposing $\|\nabla P\ast T\|_{L^\infty(\mathbb{R}^{n+1})}$, where now $\nabla=(\nabla_x,\partial_t)$ is a full gradient.} of a compact set $E$ should be comparable to its Lebesgue measure $\pazocal{L}^{n+1}(E)$, although this remains an open question.

In this article, we generalize the construction of $E_{p_s}$ by means of a sequence of contraction ratios $(\lambda_j)_j$ satisfying certain conditions depending on $s$ and an absolute parameter $\tau_0$. To be precise, we fix $1/2<s\leq 1$ and pick the smallest positive integer $d\geq 2$ such that
\begin{equation*}
	d+1 < d^{2s},
\end{equation*}
and we consider $0<\lambda_j\leq \tau_0<1/d$, for every $j$. Using the spatial antisymmetry of the kernel $\nabla_xP_s$ and techniques from Mateu and Tolsa’s work on Riesz kernels \cite{MT, T2}, our main result reads as follows:

\begin{thm*} Let $E_{p_s}$ be the $s$-parabolic Cantor set associated to $(\lambda_j)_j$, whose construction will be precised later on. If $\ell_j:=\lambda_1\cdots\lambda_j$, we have
\begin{equation*} C^{-1}\left(\sum_{j=0}^{\infty}\theta_{j,p_s}^2\right)^{-1/2}\leq \widetilde{\Gamma}_{\Theta^s,+}(E_{p_s})\leq \Gamma_{\Theta^s,+}(E_{p_s})\leq C\left(\sum_{j=0}^{\infty}\theta_{j,p_s}^2\right)^{-1/2}, 
\end{equation*} 
where $C$ depends only on $n,s$ and $\tau_0$ and
\begin{equation*}
    \theta_{j,p_s}:=\frac{\ell_j^{-(n+1)}}{(d+1)^jd^{nj}}.
\end{equation*}
\end{thm*}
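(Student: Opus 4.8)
The plan is to establish the upper and lower bounds separately, following the template of Mateu--Tolsa for Riesz kernels \cite{MT, T2}, with the crucial structural input being the spatial antisymmetry of $\nabla_x P_s$ (which plays the role that antisymmetry of the Riesz kernel plays in the elliptic setting). First I would set up the Cantor construction precisely: at generation $j$ one has roughly $(d+1)^j d^{nj}$ many $s$-parabolic cubes, the $j$-th generation cubes having spatial side $\ell_j$ and temporal side $\ell_j^{2s}$ (so that they are genuine $s$-parabolic cubes), and I would record that the natural probability measure $\mu$ on $E_{p_s}$ satisfies $\mu(Q_j) = ((d+1)^j d^{nj})^{-1}$ on each generation-$j$ cube. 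The quantity $\theta_{j,p_s} = \ell_j^{-(n+1)}/((d+1)^j d^{nj})$ is precisely the $(n+1)$-dimensional $s$-parabolic density of $\mu$ at scale $\ell_j$, i.e.\ $\mu(Q_j)/\ell_j^{n+1}$, so the theorem asserts that $\widetilde\Gamma_{\Theta^s,+}(E_{p_s}) \approx (\sum_j \theta_{j,p_s}^2)^{-1/2}$, in perfect analogy with the Riesz case.

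For the \textbf{upper bound} $\Gamma_{\Theta^s,+}(E_{p_s}) \lesssim (\sum_j \theta_{j,p_s}^2)^{-1/2}$, I would argue by duality/testing. Given any positive measure $T = \nu$ supported on $E_{p_s}$ with $\|\nabla_x P_s \ast \nu\|_\infty \le 1$ and $\|\partial_t^{1/2s} P_s \ast \nu\|_{\ast,p_s} \le 1$, the goal is to bound $\nu(E_{p_s})$. The idea is to exploit the spatial antisymmetry: one tests the potential $\nabla_x P_s \ast \nu$ against $\mu$ (or uses a clever weight) and sums the contributions scale by scale. On each generation-$j$ cube $Q$, the ``far'' interactions from other generation-$j$ cubes, when paired against the antisymmetric kernel $\nabla_x P_s$, produce a lower bound of size $\gtrsim \theta_{j,p_s} \cdot (\text{something})$; squaring and summing in $j$ (using almost-orthogonality of the different scales, which is where the CZ smoothness estimate with exponent $2\zeta = \min\{1,2s\}$ enters) yields $\nu(E_{p_s})^2 \big(\sum_j \theta_{j,p_s}^2\big) \lesssim \|\nabla_x P_s \ast \nu\|_\infty^2 \lesssim 1$. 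The temporal BMO normalization is needed to control the antisymmetry defect coming from the fact that $\nabla_x P_s$ is \emph{not} temporally antisymmetric: the ``bad'' temporal piece of the kernel is handled by a BMO--$H^1$ duality or Carleson-type estimate against the fractional time derivative.

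For the \textbf{lower bound} $\widetilde\Gamma_{\Theta^s,+}(E_{p_s}) \gtrsim (\sum_j \theta_{j,p_s}^2)^{-1/2}$, I would exhibit a good measure. Take $\nu = c\, \mu$ with $c = (\sum_j \theta_{j,p_s}^2)^{-1/2} \cdot (\text{const})$ and verify that both potentials are uniformly bounded, resp.\ have bounded $s$-parabolic BMO norm, for $\nu$ as well as for the conjugate kernel $P_s^\ast$. The pointwise estimate $|\nabla_x P_s \ast \mu(\ox)| \lesssim \sum_j \theta_{j,p_s}$ would be the naive bound, which is too weak; instead, using the spatial antisymmetry of $\nabla_x P_s$ together with the symmetry of the Cantor construction (the children of each cube are placed symmetrically, so the leading-order terms cancel), one gains a factor and obtains $|\nabla_x P_s \ast \mu(\ox)| \lesssim (\sum_j \theta_{j,p_s}^2)^{1/2}$, whence $\|\nabla_x P_s \ast \nu\|_\infty \lesssim 1$. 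The BMO bound for $\partial_t^{1/2s} P_s \ast \nu$ is obtained similarly, estimating oscillations over $s$-parabolic cubes and again summing the squared densities. Because the construction and its mirror image coincide, the same bounds hold for $P_s^\ast = P_s(-\cdot)$, giving the estimate for $\widetilde\Gamma_{\Theta^s,+}$ and hence, a fortiori, for $\Gamma_{\Theta^s,+}$ and the chain of inequalities.

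The main obstacle I expect is the lack of temporal antisymmetry of $\nabla_x P_s$: in the Riesz setting the kernel is fully odd, which makes the cancellation in the Cantor sum clean, whereas here the spatial gradient of the $s$-heat kernel is odd only in $x$ and carries a $\chi_{t>0}$ factor. Controlling the resulting ``temporal defect'' is exactly what forces the two-part normalization in \eqref{eq1.1} and what the $\partial_t^{1/2s}$-BMO condition is designed to absorb; making the Carleson/BMO estimate quantitative enough that it does not destroy the gain from the spatial antisymmetry — in both the upper and lower bound arguments — is the delicate technical heart of the proof. A secondary difficulty is keeping track of the $s$-dependent anisotropy of the cubes (spatial side $\ell_j$ versus temporal side $\ell_j^{2s}$) throughout all the scale-by-scale estimates, and checking that the condition $d+1 < d^{2s}$ is exactly what guarantees the geometric series and the separation-of-scales estimates converge in the right direction.
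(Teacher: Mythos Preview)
Your proposal has the right high-level picture (Mateu--Tolsa template, role of spatial antisymmetry, density squares), but both halves miss the operative mechanism and contain genuine gaps.

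For the \textbf{lower bound}, you propose to exhibit $\nu = c\mu$ and verify \emph{pointwise} that $|\nabla_x P_s \ast \mu(\ox)| \lesssim (\sum_j \theta_{j,p_s}^2)^{1/2}$. This $L^\infty$ claim is not what spatial antisymmetry delivers: the cancellation, combined with Haar-type orthogonality, gives only an $L^2(\mu_k)$ bound $\|\pazocal{P}^s\mu_k\|^2 \lesssim \sum_j \theta_{j,p_s}^2$ (via $\|S_k\pazocal{P}^s\mu_k\|^2 = \sum_Q \|D_Q\pazocal{P}^s\mu_k\|^2$ and the estimate $\|D_Q\pazocal{P}^s\mu_k\|^2\lesssim p(Q)^2\mu_k(Q)$). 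The paper localizes this to every $s$-parabolic cube, feeds it into a non-homogeneous $T1$ theorem on geometrically doubling spaces (Hyt\"onen--Martikainen), obtains $\|\pazocal{P}^s_{\mu_k}\|_{L^2(\mu_k)\to L^2(\mu_k)}\lesssim (\sum_j\theta_{j,p_s}^2)^{1/2}$, and only then extracts an admissible measure via the $L^2$-characterization of $\widetilde{\Gamma}_{\Theta^s,+}$ from \cite{H}. The BMO bound on $\partial_t^{1/2s}P_s\ast\nu$ is not verified by a separate oscillation estimate as you suggest; it comes for free from growth plus $\|\nabla_xP_s\ast\nu\|_\infty$ via \cite[Lemma 4.2]{HMPr}.

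For the \textbf{upper bound}, your duality sketch --- test $\nabla_xP_s\ast\nu$ against $\mu$ and extract $\nu(E)^2\sum_j\theta_{j,p_s}^2\lesssim 1$ --- does not go through for an \emph{arbitrary} admissible $\nu$: nothing forces $\nu$ to respect the self-similar scale structure of $\mu$, so the ``scale-by-scale lower bound of size $\gtrsim\theta_{j,p_s}$'' you invoke is simply unavailable. The paper's route is entirely different and substantially more involved. One first proves the \emph{lower} $L^2$ estimate $\|\pazocal{P}^s\mu_k\|^2 \gtrsim \sum_j\theta_{j,p_s}^2$ (via Tolsa's stopping-time argument with good/bad scales and long/short intervals), which controls an auxiliary capacity $\Gamma_k(E_{k,p_s})$ defined through the $L^2$-operator norm. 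One then runs an induction on $k$, reducing to a configuration where a local $Tb$ theorem (Auscher--Routin) applies to the measure $\Gamma_{\Theta^s,+}(E_{k,p_s})\,\mu_M$ at a well-chosen intermediate scale $M$. The lack of temporal antisymmetry enters precisely here, and not as a BMO defect to be ``absorbed'' by the $\partial_t^{1/2s}$ normalization: one must construct \emph{two} systems of accretive functions $b_i^j$ and $b_i^{j,\ast}$, the second obtained from the first by composition with a temporal reflection $\mathsf{R}_i^j$, because the $Tb$ hypotheses must be verified for $\pazocal{P}^s_\mu$ and its adjoint $\pazocal{P}^{s,\ast}_\mu$ separately, and $(\nabla_xP_s)^\ast(\ox)=\nabla_xP_s(-\ox)$ differs from $\nabla_xP_s$ exactly by time reversal.
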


This paper is organized into five sections that develop the proof of the main result. In Section \ref{sec2}, we fix important notation and present the explicit construction of the $s$-parabolic Cantor set. We also establish some basic properties related to the convolution operator associated with the kernel $\nabla_xP_s$ and a positive Borel regular measure $\mu$, denoted by $\pazocal{P}^s_\mu$.

Section \ref{sec3} is devoted to proving upper $L^2$ estimates for expressions of the form $\pazocal{P}^s_{\mu_k}\chi_Q$, where $Q$ is any $s$-parabolic cube and $\mu_k$ is the uniform probability measure associated with the $k$-th generation of the Cantor set. This estimate is essential for proving the lower bound of the main theorem in Section \ref{sec4}, where we apply a $T1$-theorem valid in geometrically doubling spaces.

In Section \ref{sec5}, we derive lower $L^2$ estimates for $\pazocal{P}^s_{\mu_k}1$ using arguments analogous to those of Tolsa in \cite[\textsection 5]{T2}, originally developed for Riesz kernels. These estimates are then used in Section \ref{sec6} to establish the remaining upper bound of the main theorem, relying on a local $Tb$ theorem by Auscher and Routin \cite[Theorem 3.5]{AR}.

It is in these final arguments that the distinct nature of the kernel $\nabla_xP_s$, in contrast to Riesz kernels, becomes evident. Due to the lack of anti-symmetry, we must construct a system of accretive functions that highlights how $\nabla_xP_s$ and its conjugate differ by a temporal reflection.

\textit{About the notation used in the sequel}: Constants appearing in the sequel may depend on the dimension of the ambient space and the  parameter $s$, and their value may change at different occurrences. They will frequently be denoted by the letters $c$ or $C$. The notation $A\lesssim B$ means that there exists $C$, such that $A\leq CB$. Moreover, $A\approx B$ is equivalent to $A\lesssim B \lesssim A$, while $A \simeq B$ will mean $A= CB$. If the reader finds expressions of the form $\lesssim_{\beta}$ or $\approx_\beta$, for example, this indicates that the implicit constants depend on $n,s$ and $\beta$.
    
	Since Laplacian operators (fractional or not) will frequently appear in our discussion and will always be taken with respect to spatial variables, we will adopt the notation:
	\begin{equation*}
		(-\Delta)^s:=(-\Delta_x)^s, \qquad s\in(0,1].
	\end{equation*}
    From this point on, we shall fix $s\in(1/2,1]$ throughout the whole text.
    
	\section{Basic definitions and properties}
    \label{sec2}
    Let us briefly recall the construction presented in \cite[\textsection 4.1]{H} of the particular Cantor set associated with the capacity $\Gamma_{\Theta^s}$. Choose a positive integer $d\geq 2$ such that
\begin{equation*}
	d+1 < d^{2s}.
\end{equation*}
By convention we fix the minimum value of $d\geq 2$ that satisfies the above condition so that $d=d(s)$. In what follows, we shall also fix an absolute constant $\tau_0<1/d$.

Let $Q^0:=[0,1]^{n+1}$ be the unit cube of $\mathbb{R}^{n+1}$ and pick $(d+1)d^n$ disjoint $s$-parabolic cubes $Q_i^1$, contained in $Q^0$, with sides parallel to the coordinate axes, side length $0<\lambda_1\leq \tau_0$, and with the following locations: for each of the first $n$ intervals $[0,1]$ of the cartesian product defining $Q^0$, we set
\begin{equation*}
	l_d:=\frac{1-d\lambda_1}{d-1}, \qquad J_j:=\big[(j-1)(\lambda_1+l_d), j\lambda_1+(j-1)l_d\big], \quad j=1,\ldots, d,
\end{equation*}
and take $T_d:=\bigcup_{j=1}^d J_j$. The remaining temporal interval $[0,1]$ is split in $d+1$ intervals of length $\lambda_1^{2s}$ in an analogous way. That is, we set
\begin{equation*}
	\widetilde{l}_{d}:=\frac{1-(d+1)\lambda_1^{2s}}{d}, \qquad \widetilde{J}_j:=\big[(j-1)(\lambda_1^{2s}+\widetilde{l}_{d}), j\lambda_1^{2s}+(j-1)\widetilde{l}_d\big], \quad j=1,\ldots, d+1,
\end{equation*}
and keep the subset $\widetilde{T}_d:=\bigcup_{j=1}^{d+1} \widetilde{J}_j$. This way, the first generation of the Cantor set is
\begin{equation*}
	E_{1,p_s}:=(T_d)^n\times \widetilde{T}_d.
\end{equation*}
This procedure continues inductively, so that the $k$-th generation $E_{k,p_s}$ will be formed by $(d+1)^kd^{nk}$ disjoint $s$-parabolic cubes with side length $\ell_k := \lambda_1\cdots \lambda_k$, $0<\lambda_k\leq \tau_0<1/d$, and with locations determined by the above iterative process. We name such cubes $Q^k_j$, with $j=1,\ldots, (d+1)^kd^{nk}$. The resulting $s$-\textit{parabolic} Cantor set is
\begin{equation}
\label{eq4.1.1}
	E_{p_s} = E_{p_s}(\lambda) :=  \bigcap_{k=1}^{\infty} E_{k,p_s}.
\end{equation}
For each generation $k$, consider the probability measure
   \begin{equation*}
           \mu_k := \frac{1}{|E_{k,p_s}|}\pazocal{L}^{n+1}|_{E_{k,p_s}}.
   \end{equation*}
Moreover, given $0\leq j \leq k$ we define
\begin{equation*}
    \theta_{j,p_s} := \frac{\mu_k(Q^j_i)}{\ell_j^{n+1}} = \frac{1}{(d+1)^jd^{nj} \ell_j^{n+1}}.
\end{equation*}
In \cite[\textsection 4.1]{H} it is argued that if one chooses $\lambda_k := ((d+1)d^n)^{-1/(n+1)}$ for every generation $k$, then $0<\pazocal{H}_{p_s}^{n+1}(E_{p_s})<\infty$. Observe that, as a consequence, this would imply $\theta_{k,p_s}=1$, for all $k$.

    There are more general assumptions that imply the lower bound $\pazocal{H}_{p_s}^{n+1}(E_{p_s})>0$. For instance, assume that there exists $\kappa>0$ so that $\theta_{k,p_s} \leq \kappa$ for every $k$. Now consider the probability measure $\mu$ defined on $E_{p_s}$ such that for each generation $k$, $\mu(Q_j^k):=(d+1)^{-k}d^{-nk},\, 1 \leq j \leq (d+1)^kd^{nk}$. We claim that that the previous measure presents upper $s$-parabolic growth of degree $n+1$ with constant depending only on $n,s$ and $\kappa$. If this held, by Frostman's lemma \cite[Theorem 8.8]{Mat} we would get $\pazocal{H}_{p_s}^{n+1}(E_{p_s})>0$ and thus, in particular, $\text{dim}_{\pazocal{H}_{p_s}}(E_{p_s})\geq n+1$.
    
    To prove the desired growth of $\mu$, let $Q$ be any $s$-parabolic cube, that we may assume to be contained in $Q^0$, and pick $k$ with the property $\ell_{k+1}\leq \ell(Q)\leq \ell_k$, so that $Q$ can meet, at most, $(d+1)d^n$ cubes $Q_j^k$. Thus $\mu(Q)\leq (d+1)^{-(k-1)}d^{-n(k-1)}$ and we directly deduce
\begin{align*}
	\mu(Q)\lesssim \frac{1}{(d+1)^{k+1}d^{n(k+1)}}\leq \kappa \ell_{k+1}^{n+1}\leq \ell(Q)^{n+1}.
\end{align*}
    
    For a fixed generation $k$, the assumption $\theta_{j,p_s} \leq \kappa$ for $0\leq j \leq k$ also implies the same growth property for $\mu_k$. Indeed, fix $Q\subset \mathbb{R}^{n+1}$ any $s$-parabolic cube contained in $Q^0$ and distinguish two cases: whether if $\ell(Q)\leq \ell_k$ or not. If $\ell(Q)\leq \ell_k$, notice that $|E_{k,p_s}|=(d+1)^kd^{nk}\ell_k^{n+2s}$, so we have
	   \begin{align*}
	   		\mu_k(Q)\leq \frac{1}{(d+1)^kd^{nk}\ell_k^{n+2s}}\,\ell(Q)^{n+2s}=\theta_{k,p_s}\frac{\ell(Q)^{2s-1}}{\ell_k^{2s-1}}\ell(Q)^{n+1}\leq \kappa \ell(Q)^{n+1}.
	   \end{align*}
	If $\ell(Q)> \ell_k$, there exists $0\leq m \leq k-1$ such that $\ell_{m+1}< \ell(Q)\leq\ell_{m}$ and, in this case, the number of cubes of the $m$-th generation that $Q$ can intersect is bounded by $(d+1)d^n$ (the latter is not the best bound, but it suffices for our computations). Therefore
	  \begin{align*}
	   	\mu_k(Q)&\leq (d+1)d^n\mu_k(Q^m_0) \simeq ((d+1)d^n)^{-m}\simeq  \theta_{m+1,p_s}\,\ell_{m+1}^{n+1} \leq \kappa \ell(Q)^{n+1},
	\end{align*}
	and we deduce the desired growth of $\mu_k$.
	
	Let us fix some more notation and establish some useful properties. In the sequel, $k$ will be a fixed integer so that $E_{k,p_s}$ will be a fixed generation of the Cantor set constructed above and we will write
    \begin{equation*}
        \|\cdot\|:=\|\cdot\|_{L^2(\mu_k)} \quad \text{as well as} \quad \langle f,g \rangle := \int fg \dd\mu_k 
    \end{equation*}
    For each generation $0\leq j\leq k$ we write
    \begin{equation*}
          \pazocal{Q}^j:=\{Q_i^j\,:\, 1\leq i\leq (d+1)^jd^{jn}\} \qquad \text{and} \qquad \pazocal{Q}:=\bigcup_{j=0}^k \pazocal{Q}^j.
    \end{equation*}
    If we write $Q^j$ we mean an arbitrary cube of $\pazocal{Q}^j$.

    Given $\mu$ a real compactly supported Borel measure with upper $s$-parabolic growth of degree $n+1$, we define for each $f\in L^1_{\text{loc}}(\mu)$ the convolution operator associated to $\nabla_xP_s$,
\begin{equation*}
	\pazocal{P}_{\mu}^{s}f(\ox):=\int_{\mathbb{R}^{n+1}}\nabla_x P_s(\ox-\oy)f(\oy)\text{d}\mu(\oy), \hspace{0.5cm} \ox\notin \text{supp}(\mu).
\end{equation*}
In the particular case in which $f$ is the constant function $1$ we write $\pazocal{P}^{s}\mu(\ox):=\pazocal{P}^s_{\mu}1 (\ox)$. We also introduce the truncated version of $\pazocal{P}_{\mu}^{s}$,
\begin{equation*}
	\pazocal{P}_{\mu,\varepsilon}^sf(\ox):=\int_{|\ox-\oy|>\varepsilon}\nabla_xP_s(\ox-\oy)f(\oy)\text{d}\mu(\oy),\hspace{0.5cm} \ox\in \mathbb{R}^{n+1}, \; \varepsilon>0.
\end{equation*}
For a given $1\leq p \leq \infty$, we will say that $\pazocal{P}_{\mu}^{s} f$ belongs to $L^p(\mu)$ if the $L^p(\mu)$-norm of the truncations $\|\pazocal{P}^s_{\mu,\varepsilon}f\|_{L^p(\mu)}$ is  uniformly bounded on $\varepsilon$, and we write
	\begin{equation*}
		\|\pazocal{P}_{\mu}^{s} f\|_{L^p(\mu)}:=\sup_{\varepsilon>0}\|\pazocal{P}^s_{\mu,\varepsilon}f\|_{L^p(\mu)}
	\end{equation*}
	We will say that the operator $\pazocal{P}^s_\mu$ is bounded on $L^p(\mu)$ if the operators $\pazocal{P}^s_{\mu,\varepsilon}$ are bounded on $L^p(\mu)$ uniformly on $\varepsilon$, and we equally set
	\begin{equation*}
		\|\pazocal{P}^s_\mu\|_{L^p(\mu)\to L^p(\mu)}:=\sup_{\varepsilon>0}\|\pazocal{P}^s_{\mu,\varepsilon}\|_{L^p(\mu)\to L^p(\mu)}.
	\end{equation*}
    Let us begin by establishing some basic properties of $\pazocal{P}_\mu^s$.
    \begin{lem}
    \label{lem1.1}
        For any $R\subset\mathbb{R}^{n+1}$\, $s$-parabolic cube, we have $\pazocal{P}^s_{\mu_k} \chi_R \in L^1_{\text{\normalfont{loc}}}(\mu_k)$. This implies, in particular, $\pazocal{P}^s \mu_k \in L^1_{\text{\normalfont{loc}}}(\mu_k).$
    \end{lem}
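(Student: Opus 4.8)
The plan is to establish the stronger fact that $\pazocal{P}^s_{\mu_k}\chi_R$ is \emph{bounded}, with the truncations $\pazocal{P}^s_{\mu_k,\varepsilon}\chi_R$ bounded uniformly in $\varepsilon$; since $\mu_k$ is a probability measure this immediately yields $\pazocal{P}^s_{\mu_k}\chi_R\in L^\infty(\mu_k)\subset L^1(\mu_k)=L^1_{\text{loc}}(\mu_k)$. Two features drive the argument: the kernel $\nabla_xP_s$ has order $n+1$ with respect to the $s$-parabolic norm, which is \emph{strictly} below the homogeneous dimension $n+2s$ precisely because $s>1/2$; and $\mu_k=|E_{k,p_s}|^{-1}\pazocal{L}^{n+1}|_{E_{k,p_s}}$ is absolutely continuous with bounded density $|E_{k,p_s}|^{-1}$.

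First I would record the crude pointwise bound $|\nabla_xP_s(\ox)|\lesssim|\ox|_{p_s}^{-(n+1)}$, obtained by inserting $|x|\lesssim|\ox|_{p_s}$ and $|t|\lesssim|\ox|_{p_s}^{2s}$ into the Calderón-Zygmund estimate $|\nabla_xP_s(\ox)|\lesssim|xt|\,|\ox|_{p_s}^{-(n+2s+2)}$ recalled in the introduction. From absolute continuity one also has, for every $s$-parabolic ball, $\mu_k(B_{p_s}(\ox,r))\leq|E_{k,p_s}|^{-1}\pazocal{L}^{n+1}(B_{p_s}(\ox,r))\lesssim|E_{k,p_s}|^{-1}r^{n+2s}$, i.e.\ the fixed generation enjoys a better growth exponent at small scales than the limiting measure on $E_{p_s}$.

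Now set $D:=\text{diam}_{p_s}(Q^0)$, so that $\text{supp}(\mu_k)=E_{k,p_s}\subset Q^0\subset B_{p_s}(\ox,D)$ for every $\ox\in Q^0$, and split $B_{p_s}(\ox,D)$ into the $s$-parabolic annuli $A_m:=B_{p_s}(\ox,2^{-m}D)\setminus B_{p_s}(\ox,2^{-m-1}D)$, $m\geq 0$. Then, for every $\ox\in Q^0$ and every $\varepsilon>0$,
\begin{align*}
	|\pazocal{P}^s_{\mu_k,\varepsilon}\chi_R(\ox)|&\leq\int_{E_{k,p_s}}|\nabla_xP_s(\ox-\oy)|\dd\mu_k(\oy)\lesssim\sum_{m\geq0}(2^{-m}D)^{-(n+1)}\,\mu_k(A_m)\\
	&\lesssim\frac{1}{|E_{k,p_s}|}\sum_{m\geq0}(2^{-m}D)^{2s-1}\lesssim\frac{D^{2s-1}}{|E_{k,p_s}|},
\end{align*}
where the last step uses $2s-1>0$ to sum the geometric series. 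This bound is independent of $\ox\in Q^0$ and of $\varepsilon$, and since $\mu_k$ is carried by $E_{k,p_s}\subset Q^0$ it yields $\|\pazocal{P}^s_{\mu_k}\chi_R\|_{L^\infty(\mu_k)}<\infty$, hence $\pazocal{P}^s_{\mu_k}\chi_R\in L^1_{\text{loc}}(\mu_k)$; it also shows that the integral defining $\pazocal{P}^s_{\mu_k}\chi_R(\ox)$ converges absolutely at each such $\ox$. For the last assertion, since $\text{supp}(\mu_k)\subset Q^0$ we have $\pazocal{P}^s\mu_k=\pazocal{P}^s_{\mu_k}1=\pazocal{P}^s_{\mu_k}\chi_{Q^0}$, so it suffices to take $R=Q^0$ above.

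There is no serious obstacle, but it is worth stressing that \emph{both} ingredients are needed: with only the upper $s$-parabolic growth of degree $n+1$, each annular term would be $\approx(2^{-m}D)^{-(n+1)}(2^{-m}D)^{n+1}=1$ and the series would diverge, so the sharper small-scale growth coming from the absolute continuity of the fixed generation $\mu_k$ is essential; and the resulting exponent $2s-1$ is positive only under the standing hypothesis $s>1/2$, which is exactly the point at which summability — equivalently, local integrability of $\nabla_xP_s$ against $\pazocal{L}^{n+1}$ — is secured.
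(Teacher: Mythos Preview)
Your proof is correct and rests on the same mechanism as the paper's: the crude kernel bound $|\nabla_xP_s(\ox)|\lesssim|\ox|_{p_s}^{-(n+1)}$, an annular decomposition, and the fact that the fixed-generation measure $\mu_k$ has bounded Lebesgue density so that $\mu_k(B_{p_s}(\ox,r))\lesssim|E_{k,p_s}|^{-1}r^{n+2s}$, with the resulting exponent $2s-1>0$ making the series summable.

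The packaging differs slightly. The paper integrates over a single $Q^k\in\pazocal{Q}^k$ and splits the inner domain into $Q^k$ (handled by annuli, yielding the sharper constant $\theta_{k,p_s}$) and its complement in $R\cap E_{k,p_s}$ (handled by the separation $\gtrsim\ell_{k-1}$ between distinct $k$-th generation cubes), thereby proving local $L^1$-integrability directly. You instead run the annuli globally over all of $B_{p_s}(\ox,D)$ and obtain a uniform $L^\infty(\mu_k)$ bound, which is formally stronger and avoids the near/far split entirely, at the cost of a cruder constant $D^{2s-1}/|E_{k,p_s}|$. Your route is a bit cleaner for the purposes of this lemma; the paper's split has the side benefit of isolating the bound $|\pazocal{P}^s_{\mu_k}\chi_{Q^k}(\ox)|\lesssim\theta_{k,p_s}$ for $\ox\in Q^k$, which is reused later in Section~\ref{sec3}.
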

    \begin{proof}
        Given any $s$-parabolic cube $Q^k\in \pazocal{Q}^k$ and $R\in \pazocal{Q}$,
    \begin{align*}
        \int_{Q^k}\int_{R}\frac{\dd \mu_k(\oy)\dd \mu_k(\ox)}{|\ox-\oy|_{p_s}^{n+1}} &= \int_{Q^k}\int_{R\cap E_{k,p_s}}\frac{\dd \mu_k(\oy)\dd \mu_k(\ox)}{|\ox-\oy|_{p_s}^{n+1}}\\
            &\hspace{-2cm} = \int_{Q^k}\int_{(R\cap E_{k,p_s})\setminus{Q^k}}\frac{\dd \mu_k(\oy)\dd \mu_k(\ox)}{|\ox-\oy|_{p_s}^{n+1}} + \int_{Q^k}\int_{Q^k}\frac{\dd \mu_k(\oy)\dd \mu_k(\ox)}{|\ox-\oy|_{p_s}^{n+1}}=:\text{I}+\text{II}.
        \end{align*}
    Observe that the points of $Q^k$ and $(R\cap E_{k,p_s})\setminus{Q^k}$ are separated, at least, by an $s$-parabolic distance comparable to
    \begin{equation*}
            \min\bigg\{ \ell_{k-1}\bigg(\frac{1-d\lambda_k}{d-1}\bigg),\ell_{k-1} \bigg( \frac{1-(d+1)\lambda_1^{2s}}{d} \bigg)^{\frac{1}{2s}} \bigg\} \gtrsim \ell_{k-1},
        \end{equation*}
    where the previous implicit constant depends on $\tau_0$ and $s$. Therefore, it is clear that $\text{I}\lesssim 1/\ell_{k-1}^{n+1}<\infty$. On the other hand, to deal with $\text{II}$, for each $\oy\in Q^k$ we shall contain $Q^k$ in the $s$-parabolic cube $\widetilde{Q}$ centered at $\oy$
    with side length $2\,\text{diam}_{p_s}(Q^k)$, and split the previous set into the $s$-parabolic annuli
    \begin{equation*}
            A_{j}:= Q\big(\oy, 2^{-j}\text{diam}_{p_s}(Q^k)\big)\setminus Q\big(\oy, 2^{-j-1}\text{diam}_{p_s}(Q^k)\big), \;\; j\geq -1.
        \end{equation*}
    Hence, by definition of $\mu_k$ we get, for each $\oy\in Q^k$,
    \begin{align*}
            \int_{Q^k}\frac{\dd \mu_k(\ox)}{|\ox-\oy|_{p_s}^{n+1}}&\leq \frac{1}{|E_{k,p_s}|}\sum_{j=-1}^{\infty}\int_{A_j} \frac{\dd \ox}{|\ox-\oy|_{p_s}^{n+1}} \lesssim \frac{1}{|E_{k,p_s}|}\sum_{j=-1}^{\infty} \frac{\big( 2^{-j}\ell(Q^k) \big)^{n+2s}}{\big( 2^{-j-1}\ell(Q^k) \big)^{n+1}} \\
            &\lesssim \frac{1}{(d+1)^kd^{nk} \ell_k^{n+1}} = \theta_{k,p_s}<\infty.
        \end{align*}
    Since $Q^k\in \QQ^k$ and $R\in \QQ$ were arbitrary, combining the previous estimates and the fourth estimate of \cite[Lemma 2.2]{MPr}, we deduce the desired result.
    \end{proof}
    \begin{lem}
    \label{lem1.2}
        For any $R\subset \mathbb{R}^{n+1}$ $s$-parabolic cube,
        \begin{equation*}
            \int_R\pazocal{P}^s_{\mu_k}\chi_R\dd \mu_k=0.
        \end{equation*}
    \end{lem}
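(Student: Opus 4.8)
The plan is to reduce the identity to the spatial antisymmetry of $\nabla_x P_s$. Writing $\ox=(x,t)$ and $\oy=(y,\tau)$, the first step is to justify
\begin{equation*}
\int_R\pazocal{P}^s_{\mu_k}\chi_R\,\dd\mu_k=\iint_{R\times R}\nabla_x P_s(\ox-\oy)\,\dd\mu_k(\oy)\,\dd\mu_k(\ox).
\end{equation*}
This is legitimate because, arguing as in the proof of Lemma~\ref{lem1.1} --- decomposing $R\times R$ over the finitely many cubes of $\pazocal{Q}^k$ that meet $R$ and using $|\nabla_x P_s(\ox)|\lesssim|\ox|_{p_s}^{-(n+1)}$, which follows from $|x|\le|\ox|_{p_s}$, $|t|\le|\ox|_{p_s}^{2s}$ and the Calderón--Zygmund bound for $\nabla_x P_s$ --- the double integral converges absolutely; in particular the inner integral defines $\pazocal{P}^s_{\mu_k}\chi_R(\ox)$ for $\mu_k$-a.e.\ $\ox$ and agrees with the limit of the truncations.

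The second step records two symmetry facts. First, $P_s$ is even in the spatial variable, its spatial Fourier transform $e^{-4\pi^2 t|\xi|^{2s}}$ being radial in $\xi$; hence $\nabla_x P_s$ is odd in the spatial variable, i.e.\ $\nabla_x P_s(-z,w)=-\nabla_x P_s(z,w)$ for all $(z,w)\in\mathbb{R}^n\times\mathbb{R}$. Second, unwinding the construction \eqref{eq4.1.1}, every generation is a product set $E_{k,p_s}=A_k\times B_k$ with $A_k\subset\mathbb{R}^n$ the $n$-fold product of the one-dimensional Cantor set built from the ratios $\lambda_j$ and $B_k\subset\mathbb{R}$ the one built from $\lambda_j^{2s}$; consequently $\mu_k$ is a product measure. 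Since any $s$-parabolic cube has the form $R=R'\times I$ with $R'\subset\mathbb{R}^n$ a cube and $I\subset\mathbb{R}$ an interval, the measure $\dd\mu_k(\oy)\,\dd\mu_k(\ox)$ restricted to $R\times R$ splits, in the coordinates $(x,t,y,\tau)$, into one factor in each variable, so it is invariant under interchanging the spatial components $x$ and $y$ while keeping $t$ and $\tau$ fixed. Performing this change of variables and then applying the spatial antisymmetry,
\begin{align*}
\iint_{R\times R}\nabla_x P_s(x-y,\,t-\tau)\,\dd\mu_k(\oy)\,\dd\mu_k(\ox)
&=\iint_{R\times R}\nabla_x P_s(y-x,\,t-\tau)\,\dd\mu_k(\oy)\,\dd\mu_k(\ox)\\
&=-\iint_{R\times R}\nabla_x P_s(x-y,\,t-\tau)\,\dd\mu_k(\oy)\,\dd\mu_k(\ox),
\end{align*}
whence the double integral equals its own negative and vanishes. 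The same computation applies to each truncation $\pazocal{P}^s_{\mu_k,\varepsilon}\chi_R$, since the region $\{|\ox-\oy|>\varepsilon\}$ is likewise invariant under this spatial swap.

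The one genuinely delicate point is the absolute convergence underpinning the interchange of integrals, which is the content of Lemma~\ref{lem1.1}: it is here that finiteness of the generation $k$ matters, for below scale $\ell_k$ the measure $\mu_k$ is just a multiple of Lebesgue measure and no logarithmic divergence occurs. I would also stress that the purely \emph{spatial} reflection is what is needed: since $\nabla_x P_s$ lacks temporal antisymmetry, the full swap $(\ox,\oy)\mapsto(\oy,\ox)$ only relates $\langle\pazocal{P}^s_{\mu_k}\chi_R,\chi_R\rangle$ to the corresponding pairing for the conjugate kernel $P_s^\ast$ and does not by itself force either to vanish; it is the product (tensor) structure of $\mu_k$ over the cube $R$ that makes the spatial swap available.
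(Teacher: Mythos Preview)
Your proof is correct and follows essentially the same approach as the paper: both exploit that $\mu_k|_R$ is a product measure (since $R\cap E_{k,p_s}$ is a Cartesian product of a spatial set and a temporal set), justify Fubini via the absolute integrability of Lemma~\ref{lem1.1}, and then use the spatial anti-symmetry of $\nabla_x P_s(\cdot,t)$ under the swap $x\leftrightarrow y$ to conclude that the integral equals its own negative. Your additional remarks on the truncations and on why the full swap $(\ox,\oy)\mapsto(\oy,\ox)$ would not suffice are correct and helpful, though not present in the paper's version.
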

    \begin{proof}
        Fix any $R\subset \mathbb{R}^{n+1}$ $s$-parabolic cube and notice that $R\cap E_{k,p_s}$ can be written as a translated copy of a cartesian product $\pazocal{X}_{R,k}\times \pazocal{T}_{R_k}\subset \mathbb{R}^n\times \mathbb{R}$, where $\pazocal{X}_{R,k}$ and $\pazocal{T}_{R,k}$ are sets contained in some  generations involved in the construction of a Cantor set in $\mathbb{R}^n$ and $\mathbb{R}$ respectively. Let us also observe that by \cite[Lemma 2.1]{HMPr} for any $(x,t)\neq (0,t)$ we have
    \begin{equation*}
        \nabla_xP_s(x,t) \simeq  xt^{-\frac{n+2}{2s}}\phi_{n+2,s}\big( |x|t^{-\frac{1}{2s}}\big)\chi_{t>0},
    \end{equation*}
    where $\phi_{n,s}$ is a smooth function, radially decreasing function. So for each $t\in \mathbb{R}$, the kernel $\nabla_xP_s(\cdot,t)$ is anti-symmetric. Then, by Fubini's theorem, that can be applied due to Lemma \ref{lem1.1},\medskip\\
    \begin{align*}
        \int_R\pazocal{P}^s_{\mu_k}\chi_R(\ox)&\dd \mu_k(\ox)=\int_R\int_R\nabla_xP_s(\ox-\oy)\dd \mu_k(\oy)\dd \mu_k(\ox)\\
        &=\frac{1}{|E_{k,p_s}|^2}\int_{R\cap E_{k,p_s}}\int_{R\cap E_{k,p_s}}\nabla_x P_s(\ox-\oy) \dd \oy\dd \ox\\
        &=\frac{1}{|E_{k,p_s}|^2}\int_{\pazocal{T}_{R,k}}\int_{\pazocal{T}_{R,k}}\Bigg( \int_{\pazocal{X}_{R,k}}\int_{\pazocal{X}_{R,k}} \nabla_x P_s(x-y,t-s)\dd y\dd x \Bigg)\dd s\dd t\\
        &=\frac{-1}{|E_{k,p_s}|^2}\int_{\pazocal{T}_{R,k}}\int_{\pazocal{T}_{R,k}}\Bigg( \int_{\pazocal{X}_{R,k}}\int_{\pazocal{X}_{R,k}} \nabla_x P_s(y-x,t-s)\dd x\dd y \Bigg)\dd s\dd t\\
        &=-\int_R\pazocal{P}^s_{\mu_k}\chi_R(\ox)\dd \mu_k(\ox).
    \end{align*}
    \end{proof}
	For each $Q\in\pazocal{Q}$ and $f\in L^1_{\text{loc}}(\mu_k)$ we write
    \begin{align*}
        S_Qf(\ox):=\bigg( \frac{1}{\mu_k(Q)}\int_Q f\dd\mu_k \bigg)\chi_Q(\ox), \qquad S_jf(\ox):=\sum_{Q\in\pazocal{Q}^j} S_Qf(\ox), \quad 0\leq j \leq k.
    \end{align*}
	Let $\pazocal{CH}(Q)$ be the set of $s$-parabolic cubes that are \textit{children} of $Q$. For any $Q\in \pazocal{Q}\setminus \pazocal{Q}^k=:\pazocal{Q}^\ast$ and $f\in L^1_{\text{loc}}(\mu_k)$ we also write
    \begin{align*}
        D_Qf(\ox):=\bigg[ \sum_{P\in \pazocal{CH}(Q)} S_Pf(\ox) \bigg] - S_Qf(\ox), \qquad D_jf(\ox):=\sum_{Q\in\pazocal{Q}^j} D_Qf(\ox), \quad 0\leq j \leq k-1.
    \end{align*}
    Notice that $\int D_Q f \dd\mu_k = 0$ for all $Q\in \QQ^\ast$ and
    \begin{equation*}
        D_jf(\ox)=S_{j+1}f(\ox)-S_jf(\ox), \qquad 0\leq j \leq k-1.
    \end{equation*}
    \begin{lem}
    \label{lem1.3}
        For any pair of different $s$-parabolic cubes $Q_1,Q_2\in \QQ^\ast$,
        \begin{equation*}
            \langle D_{Q_1}f, D_{Q_2}f \rangle = 0
        \end{equation*}
    \end{lem}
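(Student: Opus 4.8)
The plan is to reduce the statement to the standard orthogonality of martingale-difference operators and then handle the only subtlety, which is that different cubes in $\QQ^\ast$ may lie in different generations, so that one of the two operators "sees" the other as essentially a conditional expectation. First I would recall that each $D_Q f$ has mean zero against $\mu_k$ and is supported on $Q$, and that $D_Q f$ is constant on each child $P\in\pazocal{CH}(Q)$; these facts follow immediately from the definitions of $S_Q$, $S_P$, and $D_Q$ together with the identity $\int D_Q f\dd\mu_k=0$ already noted in the excerpt.

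Next I would split into cases according to the relative position of $Q_1$ and $Q_2$. If $Q_1$ and $Q_2$ are not nested (neither contains the other), then since cubes of the Cantor construction at any fixed generation are pairwise disjoint and the cubes of a later generation sit inside those of an earlier one, one checks that $Q_1$ and $Q_2$ have disjoint interiors after intersecting with $E_{k,p_s}$; more precisely $\mu_k(Q_1\cap Q_2)=0$, because the supports are separated by the Cantor gaps. Hence $D_{Q_1}f\cdot D_{Q_2}f=0$ $\mu_k$-a.e.\ and the pairing vanishes trivially. The remaining case is $Q_1\subsetneq Q_2$ (the case $Q_2\subsetneq Q_1$ being symmetric). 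Here $D_{Q_1}f$ is supported on $Q_1$, which is contained in exactly one child $P_0\in\pazocal{CH}(Q_2)$, and on $P_0$ the function $D_{Q_2}f$ equals a constant $c_{P_0}$. Therefore
\begin{equation*}
\langle D_{Q_1}f, D_{Q_2}f\rangle = \int_{Q_1} D_{Q_1}f\cdot D_{Q_2}f\dd\mu_k = c_{P_0}\int_{Q_1} D_{Q_1}f\dd\mu_k = c_{P_0}\cdot 0 = 0,
\end{equation*}
using the mean-zero property of $D_{Q_1}f$.

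The main point to get right — and the only place where a genuine argument is needed rather than bookkeeping — is the claim that in the non-nested case the two functions have essentially disjoint $\mu_k$-supports, and in the nested case that $Q_1$ sits inside a single child of $Q_2$ and that $D_{Q_2}f$ is genuinely constant there. Both reduce to the combinatorial structure of the $s$-parabolic Cantor set: the cubes of $\pazocal{Q}^{j+1}$ refine those of $\pazocal{Q}^j$, and within a fixed generation they are pairwise disjoint. Once that is in hand, the orthogonality is exactly the classical Haar/martingale-difference orthogonality, so no estimate on $\nabla_xP_s$ or on growth of $\mu$ enters; Lemma \ref{lem1.1} and Lemma \ref{lem1.2} are not needed here. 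I would also remark in passing that the same computation shows $\langle D_Q f, S_j f\rangle = 0$ whenever $j\leq $ the generation of $Q$, which is the form in which this orthogonality is typically used later, but the displayed statement is just the $Q_1\neq Q_2$ case.
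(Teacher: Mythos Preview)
Your proof is correct and follows essentially the same approach as the paper: both split into the disjoint-support case and the nested case $Q_1\subsetneq Q_2$, and in the latter use that $D_{Q_2}f$ is constant on the child of $Q_2$ containing $Q_1$ together with $\int D_{Q_1}f\,\dd\mu_k=0$. The paper carries out the nested case via an explicit expansion of the product $D_{Q_1}f\cdot D_{Q_2}f$, whereas you phrase it more conceptually as martingale-difference orthogonality, but the underlying argument is identical.
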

    \begin{proof}
        Let $Q_1,Q_2\in \QQ^\ast$ with disjoint support. Then it is clear that $\langle D_{Q_1}f, D_{Q_2}f \rangle = 0$. If on the contrary $Q_1\subsetneq Q_2$, let $\widetilde{Q}_2$ be the unique son of $Q_2$ with $Q_1\subseteq \widetilde{Q}_2$, and observe that
        \begin{align*}
            D_{Q_1}f(&\ox)\cdot D_{Q_2}f(\ox)\\
            &= \Bigg[ \Bigg(\sum_{P\in\pazocal{CH}(Q_1)}S_Pf(\ox)\Bigg)-S_{Q_1}f(\ox) \Bigg]\cdot \Bigg[ \Bigg(\sum_{R\in\pazocal{CH}(Q_2)}S_Pf(\ox)\Bigg)-S_{Q_2}f(\ox) \Bigg]\\
            &=\Bigg( \frac{1}{\mu_k(\widetilde{Q}_2)}\int_{\widetilde{Q}_2}f\,\dd \mu_k \Bigg)\cdot \sum_{P\in \pazocal{CH}(Q_1)}S_pf(\ox) -\Bigg( \frac{1}{\mu_k(Q_2)}\int_{Q_2}f\,\dd \mu_k \Bigg)\cdot \sum_{P\in \pazocal{CH}(Q_1)}S_pf(\ox)\\
            &\hspace{2cm}-\Bigg( \frac{1}{\mu_k(Q_2)}\int_{Q_2}f\,\dd \mu_k \Bigg)\cdot\Bigg( \frac{1}{\mu_k(\widetilde{Q}_2)}\int_{\widetilde{Q}_2}f\,\dd \mu_k \Bigg)\chi_{Q_1}(\ox)\\
            &\hspace{2cm}+\Bigg( \frac{1}{\mu_k(Q_2)}\int_{Q_2}f\,\dd \mu_k \Bigg)\cdot\Bigg( \frac{1}{\mu_k(Q_2)}\int_{Q_2}f\,\dd \mu_k \Bigg)\chi_{Q_1}(\ox)\\
            &=\Bigg( \frac{1}{\mu_k(\widetilde{Q}_2)}\int_{\widetilde{Q}_2}f\,\dd \mu_k \Bigg)\cdot D_{Q_1}f(\ox)-\Bigg( \frac{1}{\mu_k(Q_2)}\int_{Q_2}f\,\dd \mu_k \Bigg)\cdot D_{Q_1}f(\ox).
        \end{align*}
        Then, $\langle D_{Q_1}f, D_{Q_2}f \rangle = 0$. So, in general, if $Q_1\neq Q_2$ belong to $\QQ^\ast$, the functions $D_{Q_1}f$ and $D_{Q_2}f$ are orthogonal in an $L^2(\mu_k)$-sense.
    \end{proof}
	If $f\in L^1_{\text{loc}}(\mu_k)$ with $\int f\dd\mu_k = 0$, then Lemma \ref{lem1.3} implies $\|S_kf\|^2 = \sum_{Q\in\pazocal{Q}^\ast}\|D_Qf\|^2$, so in particular, applying Lemma \ref{lem1.2} with $R=Q^0$,
		\begin{equation}
			\label{eq1.5}
			\|S_k\Ps\mu_k\|^2 = \sum_{Q\in\pazocal{Q}^\ast}\|D_Q\Ps\mu_k\|^2.
		\end{equation}
	\section{The upper \mathinhead{L^2}{}-estimate for \mathinhead{\Ps\mu_k}{}}
    \label{sec3}
	\begin{lem}
		\label{lem4.1.1}
		Let $Q\in \pazocal{Q}^j$, for any $0\leq j \leq k$, and $\ox, \ox'\in Q$. Then,
		\begin{equation*}
			\big\rvert \pazocal{P}^s_{\mu_k}\chi_{\mathbb{R}^{n+1}\setminus{Q}}(\ox) - \pazocal{P}^s_{\mu_k}\chi_{\mathbb{R}^{n+1}\setminus{Q}} (\ox') \big\rvert \lesssim_{\tau_0} p(Q),
		\end{equation*}
		where $ p(R):=\sum_{r=0}^{j}\theta_{r,p_s}\frac{\ell_j}{\ell_r}$, for $R\in \pazocal{Q}^j$.
	\end{lem}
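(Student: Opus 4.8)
I would run the classical argument for the Hölder continuity of a Calderón--Zygmund potential, the only twist being that the annuli of the usual decomposition are replaced by the ``shells'' cut out by the ancestors of $Q$ in the Cantor construction. Fix $Q=Q^j\in\QQ^j$ and let $Q^j\subset Q^{j-1}\subset\cdots\subset Q^0$ be its chain of ancestors, so $Q^r\in\QQ^r$ and $\ell(Q^r)=\ell_r$. Since $\text{supp}(\mu_k)=E_{k,p_s}\subset Q^0$ and the $Q^r$ are nested, for $\ox\in Q$ we have
\begin{equation*}
\Ps_{\mu_k}\chi_{\mathbb{R}^{n+1}\setminus Q}(\ox)=\int_{E_{k,p_s}\setminus Q}\nabla_xP_s(\ox-\oy)\dd\mu_k(\oy),\qquad E_{k,p_s}\setminus Q=\bigsqcup_{r=0}^{j-1}\big(E_{k,p_s}\cap(Q^r\setminus Q^{r+1})\big).
\end{equation*}
A point $\oy$ of the $r$-th shell lies in a $k$-th generation cube that branches off from $Q$ at generation $r$, so, exactly as in the separation estimate used in the proof of Lemma \ref{lem1.1}, $|\ox-\oy|_{p_s}\gtrsim_{\tau_0}\ell_r\geq\ell_{j-1}$; in particular the integral is absolutely convergent (indeed bounded on $Q$), $\Ps_{\mu_k}\chi_{\mathbb{R}^{n+1}\setminus Q}$ is well defined there, and for $\ox,\ox'\in Q$,
\begin{equation*}
\Ps_{\mu_k}\chi_{\mathbb{R}^{n+1}\setminus Q}(\ox)-\Ps_{\mu_k}\chi_{\mathbb{R}^{n+1}\setminus Q}(\ox')=\sum_{r=0}^{j-1}\int_{E_{k,p_s}\cap(Q^r\setminus Q^{r+1})}\big[\nabla_xP_s(\ox-\oy)-\nabla_xP_s(\ox'-\oy)\big]\dd\mu_k(\oy).
\end{equation*}

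For the $r$-th term I would first record the two geometric facts doing the work. First, $|\ox-\ox'|_{p_s}\leq\text{diam}_{p_s}(Q)\approx\ell_j$. Second, the $k$-th generation cubes inside $Q^r\setminus Q^{r+1}$ sit in the generation-$(r+1)$ subcubes of $Q^r$ other than $Q^{r+1}$, and by the explicit placement in Section \ref{sec2} the spatial gap between $Q^{r+1}$ and these siblings is $\frac{1-d\lambda_{r+1}}{d-1}\,\ell_r$ and the temporal gap $\frac{1-(d+1)\lambda_{r+1}^{2s}}{d}\,\ell_r^{2s}$; since $\lambda_{r+1}\leq\tau_0<1/d$ and $d+1<d^{2s}$, both gap ratios are bounded below in terms of $\tau_0,d,s$, and, together with $Q^r\setminus Q^{r+1}\subset Q^r$, this gives
\begin{equation*}
|\ox-\oy|_{p_s}\approx_{\tau_0}\ell_r\qquad\text{for all }\ox\in Q,\ \oy\in E_{k,p_s}\cap(Q^r\setminus Q^{r+1}).
\end{equation*}
Since $\ell_j\leq\ell_{r+1}=\lambda_{r+1}\ell_r$ is small compared with $\ell_r$, the scale separation $|\ox-\ox'|_{p_s}\leq\tfrac12|\ox-\oy|_{p_s}$ holds, so the smoothness estimate for $\nabla_xP_s$ applies about the base point $\ox-\oy$; recalling that $2\zeta=\min\{1,2s\}=1$ in the range $s\in(1/2,1]$, it reads $|\nabla_xP_s(\ox-\oy)-\nabla_xP_s(\ox'-\oy)|\lesssim|\ox-\ox'|_{p_s}\,|\ox-\oy|_{p_s}^{-(n+2)}$. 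Hence the $r$-th term is
\begin{equation*}
\lesssim_{\tau_0}\mu_k\big(Q^r\setminus Q^{r+1}\big)\,\frac{\ell_j}{\ell_r^{\,n+2}}\leq\frac{\mu_k(Q^r)}{\ell_r^{\,n+1}}\cdot\frac{\ell_j}{\ell_r}=\theta_{r,p_s}\,\frac{\ell_j}{\ell_r},
\end{equation*}
using $\mu_k(Q^r)=(d+1)^{-r}d^{-nr}=\theta_{r,p_s}\ell_r^{n+1}$. Summing over $0\leq r\leq j-1$ and adding the nonnegative term $r=j$ yields $\lesssim_{\tau_0}\sum_{r=0}^{j}\theta_{r,p_s}\ell_j/\ell_r=p(Q)$, which is the claim.

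The telescoping above is routine; the point that needs care is the second geometric fact, namely extracting from the construction of Section \ref{sec2} that each shell $Q^r\setminus Q^{r+1}$ is \emph{quantitatively} separated from $Q$ at the scale $\ell_r$, with a gap that dominates $\text{diam}_{p_s}(Q)$. This is what makes a single application of the kernel's Lipschitz bound on each shell suffice — no further annular splitting inside a shell is needed, since there the distance to $Q$ is comparable to $\ell_r$ throughout — and it is also where the dependence on $\tau_0$ enters, through the standing assumptions on $\tau_0$ and the uniform lower bounds for $\frac{1-d\lambda_{r+1}}{d-1}$ and $\big(\frac{1-(d+1)\lambda_{r+1}^{2s}}{d}\big)^{1/2s}$ forced by $\lambda_{r+1}\leq\tau_0<1/d$ and $d+1<d^{2s}$.
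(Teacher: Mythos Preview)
Your argument is correct and follows the same route as the paper's: decompose the complement of $Q$ into the shells $Q^r\setminus Q^{r+1}$ cut by the ancestors of $Q$, use that on each shell $|\ox-\oy|_{p_s}\approx_{\tau_0}\ell_r$, apply the kernel's smoothness, and sum $\theta_{r,p_s}\,\ell_j/\ell_r$. The one point that is not quite airtight is the claim that $|\ox-\ox'|_{p_s}\leq\tfrac12|\ox-\oy|_{p_s}$: the constants hidden in $\text{diam}_{p_s}(Q)\approx\ell_j$ and in the gap bound depend on $n,s,\tau_0$, and for $r=j-1$ with $\tau_0$ close to $1/d$ the factor $\tfrac12$ need not survive. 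The paper avoids this by inserting the intermediate point $\widehat{x}=(x',t)$ and applying the mean value theorem coordinate\-wise via the derivative bounds $|\Delta_xP_s|\lesssim|\cdot|_{p_s}^{-(n+2)}$ and $|\partial_t\nabla_xP_s|\lesssim|\cdot|_{p_s}^{-(n+2s+1)}$; the segments stay in $Q\subset Q^{r+1}$, hence at distance $\gtrsim_{\tau_0}\ell_r$ from $\oy$ throughout, and no $\tfrac12$ condition is needed. You can do the same, or simply note that since both $\ox-\oy$ and $\ox'-\oy$ have $s$-parabolic norm $\approx_{\tau_0}\ell_r$, the H\"older bound holds with a $\tau_0$-dependent constant regardless.
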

	\begin{proof}
		It is clear that if $j=0$ each term of the above difference is null, so let us assume $j>0$. Let $\widehat{Q}$ be the parent of $Q$ and write $\ox=(x,t), \ox'=(x',t')$ and $\widehat{x}:=(x',t)$. Then, applying the mean value theorem component-wise similarly as in \cite[Theorem 2.2]{HMPr} and writing 
        \begin{equation*}
            \ox\in Q_k(\ox)\subset Q_{k-1}(\ox)\subset \cdots \subset Q_1(\ox)\subset Q_0(\ox)=Q^0,
        \end{equation*}
        the chain of $s$-parabolic cubes of each generation that contains $\ox$ successively, we have
		\begin{align*}
			\big\rvert \pazocal{P}^s_{\mu_k}\chi_{\mathbb{R}^{n+1}\setminus{Q}} &(\ox) - \pazocal{P}^s_{\mu_k}\chi_{\mathbb{R}^{n+1}\setminus{Q}} (\ox') \big\rvert \\
			&\hspace{-0.95cm}\leq \int_{\mathbb{R}^{n+1}\setminus{Q}} \big\rvert \nabla_x P_s(\ox-\oy)-\nabla_x P_s(\widehat{x}-\oy) \big\rvert \dd \mu_k(\oy)\\
			&\hspace{2.55cm}+\int_{\mathbb{R}^{n+1}\setminus{Q}} \big\rvert \nabla_x P_s(\widehat{x}-\oy)-\nabla_x P_s(\ox'-\oy) \big\rvert \dd \mu_k(\oy)\\
			&\hspace{-0.95cm}\lesssim \int_{\mathbb{R}^{n+1}\setminus{Q}}\frac{|x-x'|}{|\ox-\oy|_{p_s}^{n+2}}\dd \mu_k(\oy)+\int_{\mathbb{R}^{n+1}\setminus{Q}}\frac{|t-t'|}{|\ox-\oy|_{p_s}^{n+2s+1}}\dd \mu_k(\oy)\\
			&\hspace{-0.95cm}\lesssim \ell(Q)\sum_{r=1}^{j}\int_{Q_{r-1}(\ox)\setminus{Q_{r}(\ox)}} \frac{\dd \mu_k(\oy)}{|\ox-\oy|_{p_s}^{n+2}}+\ell(Q)^{2s}\sum_{r=1}^{j}\int_{Q_{r-1}(\ox)\setminus{Q_{r}(\ox)}} \frac{\dd \mu_k(\oy)}{|\ox-\oy|_{p_s}^{n+2s+1}}\\
			&\hspace{-0.95cm}\lesssim \ell(Q)\sum_{r=0}^{j-1}\frac{\mu_k(Q_{r}(\ox))}{\ell_{r}^{n+2}}+\ell(Q)^{2s}\sum_{r=0}^{j-1}\frac{\mu_k(Q_{r}(\ox))}{\ell_{r}^{n+2s+1}}\\
			&\hspace{-0.95cm}\lesssim \frac{\ell(Q)}{\ell(\widehat{Q})}\sum_{r=0}^{j-1}\theta_{r,p_s}\frac{\ell_{j-1}}{\ell_{r}}+\frac{\ell(Q)^{2s}}{\ell(\widehat{Q})^{2s}}\sum_{r=0}^{j-1}\theta_{r,p_s}\frac{\ell_{j-1}^{2s}}{\ell_{r}^{2s}}\leq \Bigg[ \frac{\ell(Q)}{\ell(\widehat{Q})} + \frac{\ell(Q)^{2s}}{\ell(\widehat{Q})^{2s}} \Bigg]p(\widehat{Q})\lesssim \frac{\ell(Q)}{\ell(\widehat{Q})}p(\widehat{Q}).
		\end{align*}
		Finally observe that
		\begin{align*}
			p(\widehat{Q})&=\theta_{j-1,p_s}+\big(\theta_{j-2,p_s}\lambda_{j-1}\big)+\cdots+\big(\theta_{1,p_s}\lambda_{j-1}\cdots\lambda_2\big)+\big(\lambda_{j-1}\cdots\lambda_1\big)\\
			&=\frac{1}{\lambda_j}\big( p(Q)-\theta_{j,p_s}\big)\leq \frac{1}{\lambda_{j}}p(Q) = \frac{\ell(\widehat{Q})}{\ell(Q)}p(Q),
		\end{align*}
		and the result follows.
	\end{proof}
	\begin{lem}
		\label{lem4.1.2}
		If $Q\in \pazocal{Q}^j$ with $j<k$, then
		\begin{equation*}
			\bigg\rvert  S_Q(\pazocal{P}^s \mu_k) - \sum_{P\in \pazocal{CH}(Q)} S_P(\pazocal{P}^s \mu_k) \bigg\rvert\lesssim_{\tau_0} p(Q).
		\end{equation*}
	\end{lem}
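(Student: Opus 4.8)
The left-hand side equals $-D_Q(\Ps\mu_k)$, so the plan is to bound $|D_Q(\Ps\mu_k)|$ at $\mu_k$-a.e.\ point of $Q$. Decompose $\Ps\mu_k=\Ps_{\mu_k}\chi_Q+\Ps_{\mu_k}\chi_{\mathbb{R}^{n+1}\setminus Q}$, both summands lying in $L^1_{\mathrm{loc}}(\mu_k)$ by Lemma \ref{lem1.1}. The crux is that Lemma \ref{lem1.2} can be applied twice. First, with $R=Q$: $S_Q(\Ps_{\mu_k}\chi_Q)=\bigl(\mu_k(Q)^{-1}\int_Q\Ps_{\mu_k}\chi_Q\dd\mu_k\bigr)\chi_Q=0$, hence $S_Q(\Ps\mu_k)=S_Q(\Ps_{\mu_k}\chi_{\mathbb{R}^{n+1}\setminus Q})$. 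Second, for each $P\in\pazocal{CH}(Q)$ split $\chi_Q=\chi_P+\chi_{Q\setminus P}$; since $S_P(\Ps_{\mu_k}\chi_P)=0$ by Lemma \ref{lem1.2} with $R=P$, we get $S_P(\Ps\mu_k)=S_P(\Ps_{\mu_k}\chi_{Q\setminus P})+S_P(\Ps_{\mu_k}\chi_{\mathbb{R}^{n+1}\setminus Q})$. Subtracting, and recalling $D_Qg=\sum_{P\in\pazocal{CH}(Q)}S_Pg-S_Qg$,
\begin{equation*}
	S_Q(\Ps\mu_k)-\sum_{P\in\pazocal{CH}(Q)}S_P(\Ps\mu_k)=-D_Q\bigl(\Ps_{\mu_k}\chi_{\mathbb{R}^{n+1}\setminus Q}\bigr)-\sum_{P\in\pazocal{CH}(Q)}S_P\bigl(\Ps_{\mu_k}\chi_{Q\setminus P}\bigr),
\end{equation*}
so it suffices to bound each term on the right by $\lesssim_{\tau_0}p(Q)$.

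The first term is immediate from Lemma \ref{lem4.1.1}. For $\mu_k$-a.e.\ $\ox\in Q$ there is a unique child $P=P(\ox)\ni\ox$, and with $g:=\Ps_{\mu_k}\chi_{\mathbb{R}^{n+1}\setminus Q}$ we have $D_Qg(\ox)=\mu_k(P)^{-1}\int_Pg\dd\mu_k-\mu_k(Q)^{-1}\int_Qg\dd\mu_k$, a difference of two averages of $g$ over subsets of $Q$; hence $|D_Qg(\ox)|\leq\sup_{\ox',\ox''\in Q}|g(\ox')-g(\ox'')|\lesssim_{\tau_0}p(Q)$.

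For the second term, again let $P=P(\ox)\ni\ox$ be the child containing $\ox$; only its summand survives, so the term at $\ox$ equals $\mu_k(P)^{-1}\int_P\Ps_{\mu_k}\chi_{Q\setminus P}\dd\mu_k$, whose absolute value is at most $\sup_{\oz\in P}|\Ps_{\mu_k}\chi_{Q\setminus P}(\oz)|$. Now $\mu_k|_{Q\setminus P}$ is carried by $\bigcup_{P'\in\pazocal{CH}(Q),\,P'\neq P}P'$, and by the construction of the Cantor set — exactly as in the separation estimate in the proof of Lemma \ref{lem1.1}, now using $\lambda_{j+1}\leq\tau_0<1/d$ together with $(d+1)\tau_0^{2s}<(d+1)d^{-2s}<1$ (the defining property of $d$) — every such $P'$ lies at $s$-parabolic distance $\gtrsim_{\tau_0}\ell_j=\ell(Q)$ from $P$. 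Hence $|\oz-\oy|_{p_s}\gtrsim_{\tau_0}\ell_j$ for all $\oz\in P$ and $\oy\in Q\setminus P$, and the size part of the order-$(n+1)$ Calderón--Zygmund estimate for $\nabla_xP_s$ gives, for $\oz\in P$,
\begin{equation*}
	\bigl|\Ps_{\mu_k}\chi_{Q\setminus P}(\oz)\bigr|\leq\int_{Q\setminus P}\frac{\dd\mu_k(\oy)}{|\oz-\oy|_{p_s}^{n+1}}\lesssim_{\tau_0}\frac{\mu_k(Q\setminus P)}{\ell_j^{n+1}}\leq\frac{\mu_k(Q)}{\ell_j^{n+1}}=\theta_{j,p_s}\leq p(Q).
\end{equation*}
Combining the two bounds proves the lemma. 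The only genuinely new ingredient is this last estimate — the $\gtrsim_{\tau_0}\ell(Q)$ separation of the children of $Q$ paired with the upper growth $\mu_k(Q)=\theta_{j,p_s}\ell_j^{n+1}$; the rest is a telescoping of the two mean-zero identities of Lemma \ref{lem1.2} against the oscillation bound of Lemma \ref{lem4.1.1}, and I expect no obstacle beyond the bookkeeping.
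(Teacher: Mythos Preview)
Your proof is correct and follows essentially the same approach as the paper: the identical decomposition via Lemma \ref{lem1.2}, the same separation estimate yielding $|\Ps_{\mu_k}\chi_{Q\setminus P}|\lesssim\theta_{j,p_s}$, and Lemma \ref{lem4.1.1} for the outer piece. Your treatment of $D_Q(\Ps_{\mu_k}\chi_{\mathbb{R}^{n+1}\setminus Q})$ is in fact slightly cleaner than the paper's --- you bound the difference of two averages directly by the oscillation of $g$ over $Q$, whereas the paper expands $S_Q g - \sum_P S_P g$ explicitly and pairs children via translations $\tau_{P\to P'}$ before invoking Lemma \ref{lem4.1.1}; both reach the same bound.
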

	\begin{proof}
		Notice that by Lemma \ref{lem1.2}, for any $P\in \pazocal{CH}(Q)$ we have $S_P(\pazocal{P}^s_{\mu_k}\chi_{P})=0$ and $S_Q(\pazocal{P}^s_{\mu_k}\chi_{Q})=0$. Hence,
		\begin{align*}
			\bigg\rvert  S_Q(\pazocal{P}^s \mu_k) &- \sum_{P\in \pazocal{CH}(Q)} S_P(\pazocal{P}^s \mu_k) \bigg\rvert = \bigg\rvert  S_Q(\pazocal{P}^s_{\mu_k}\chi_{\mathbb{R}^{n+1}\setminus{Q}}) - \sum_{P\in \pazocal{CH}(Q)} S_P(\pazocal{P}^s_{\mu_k}\chi_{\mathbb{R}^{n+1}\setminus{P}})\bigg\rvert\\
			&\leq \sum_{P\in \pazocal{CH}(Q)}|S_P(\pazocal{P}^s_{\mu_k}\chi_{Q\setminus{P}})|+\bigg\rvert  S_Q(\pazocal{P}^s_{\mu_k}\chi_{\mathbb{R}^{n+1}\setminus{Q}}) - \sum_{P\in \pazocal{CH}(Q)} S_P(\pazocal{P}^s_{\mu_k}\chi_{\mathbb{R}^{n+1}\setminus{Q}})\bigg\rvert.
		\end{align*}
		It is clear that $|\pazocal{P}^s_{\mu_k}\chi_{Q\setminus{P}}(\ox)|\lesssim \mu_k(Q)/\ell(Q)^{n+1}=\theta_{j,p_s}\leq p(Q)$, for each $\ox\in P$. So the first sum satisfies
		\begin{equation*}
			\sum_{P\in \pazocal{CH}(Q)}|S_P(\pazocal{P}^s_{\mu_k}\chi_{Q\setminus{P}})|\lesssim \sum_{P\in \pazocal{CH}(Q)}\theta_{j,p_s}\chi_P\leq \theta_{j,p_s}\leq p(Q).
		\end{equation*}
		For the remaining term write
		\begin{align*}
			\bigg\rvert  &S_Q(\pazocal{P}^s_{\mu_k}\chi_{\mathbb{R}^{n+1}\setminus{Q}}) - \sum_{P\in \pazocal{CH}(Q)} S_P(\pazocal{P}^s_{\mu_k}\chi_{\mathbb{R}^{n+1}\setminus{Q}})\bigg\rvert\\
			&=\bigg\rvert \sum_{P\in \pazocal{CH}(Q)} \bigg( \frac{1}{\mu_k(P)}\int_P \pazocal{P}^s_{\mu_k}\chi_{\mathbb{R}^{n+1}\setminus{Q}}(\ox)\,\dd \mu_k(\ox) - \frac{1}{\mu_k(Q)}\int_Q \pazocal{P}^s_{\mu_k}\chi_{\mathbb{R}^{n+1}\setminus{Q}}(\ox)\,\dd \mu_k(\ox) \bigg)\chi_P \bigg\rvert\\
			&=\bigg\rvert \sum_{P\in \pazocal{CH}(Q)}\sum_{P'\in \pazocal{CH}(Q)} \bigg( \frac{1}{(d+1)d^n\mu_k(P)}\int_P \pazocal{P}^s_{\mu_k}\chi_{\mathbb{R}^{n+1}\setminus{Q}}(\ox)\,\dd \mu_k(\ox) \\
			&\hspace{7cm}- \frac{1}{\mu_k(Q)}\int_{P'} \pazocal{P}^s_{\mu_k}\chi_{\mathbb{R}^{n+1}\setminus{Q}}(\ox)\,\dd \mu_k(\ox) \bigg)\chi_P \bigg\rvert\\
			&=\bigg\rvert \sum_{P\in \pazocal{CH}(Q)}\sum_{P'\in \pazocal{CH}(Q)}  \frac{1}{\mu_k(Q)} \bigg(\int_P \pazocal{P}^s_{\mu_k}\chi_{\mathbb{R}^{n+1}\setminus{Q}}(\ox)\,\dd \mu_k(\ox)-\int_{P'} \pazocal{P}^s_{\mu_k}\chi_{\mathbb{R}^{n+1}\setminus{Q}}(\ox)\,\dd \mu_k(\ox) \bigg)\chi_P\bigg\rvert\\
			&\leq \sum_{P\in \pazocal{CH}(Q)}\sum_{P'\in \pazocal{CH}(Q)}  \frac{1}{\mu_k(Q)}\bigg( \int_Q \big\rvert \pazocal{P}^s_{\mu_k}\chi_{\mathbb{R}^{n+1}\setminus{Q}}(\ox)-\pazocal{P}^s_{\mu_k}\chi_{\mathbb{R}^{n+1}\setminus{Q}}(\tau_{P\to P'}(\ox)) \big\rvert \dd \mu_k(\ox) \bigg)\chi_P,
		\end{align*}
		where $\tau_{P\to P'}$ is the translation of $\mathbb{R}^{n+1}$ satisfying $\tau_{P\to P'}(P)=P'$. Thus, by Lemma \ref{lem4.1.1},
		\begin{align*}
			\bigg\rvert  S_Q(\pazocal{P}^s_{\mu_k}\chi_{\mathbb{R}^{n+1}\setminus{Q}}) &- \sum_{P\in\pazocal{CH}(Q)} S_P(\pazocal{P}^s_{\mu_k}\chi_{\mathbb{R}^{n+1}\setminus{Q}})\bigg\rvert \\
			&\lesssim \sum_{P\in \pazocal{CH}(Q)}\sum_{P'\in \pazocal{CH}(Q)}  p(Q)\chi_P = (d+1)d^np(Q)\chi_Q \lesssim p(Q).
		\end{align*}
	\end{proof}
	\begin{rem}
		\label{rem4.1.5}
		Observe that as an immediate consequence of the previous lemma we have
		\begin{align*}
			\big\| D_Q(\pazocal{P}^s \mu_k) \big\|^2= \int_Q \bigg\rvert S_Q(\pazocal{P}^s \mu_k)-\sum_{P\in \pazocal{CH}(Q)}S_P(\pazocal{P}^s \mu_k) \bigg\rvert^2\dd \mu_k\lesssim_{\tau_0} p(Q)^2\mu_k(Q).
		\end{align*}
	\end{rem}
	\begin{lem}
		\label{lem4.1.3}
		Let $M\geq 0$ be an integer and $Q^j\in \pazocal{Q}^j$, for $0\leq j \leq M$. Then,
		\begin{equation*}
			\sum_{j=0}^{M}p(Q^j)^2 \lesssim \sum_{j=0}^M \theta_{j,p_s}^2.
		\end{equation*}
	\end{lem}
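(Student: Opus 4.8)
The plan is to show that $p(Q^j)$ is in fact comparable to $\theta_{j,p_s}$ itself, after which the claimed inequality is immediate. First I would note that $p(Q^j)$ depends only on the generation index $j$, not on the chosen cube: writing $p_j:=p(Q^j)=\sum_{r=0}^{j}\theta_{r,p_s}\,\ell_j/\ell_r$, it suffices to prove the pointwise bound $p_j\lesssim\theta_{j,p_s}$ for each $j$, and then square and sum over $0\le j\le M$. So the sum over the cubes of a fixed generation never really appears.

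For the pointwise bound I would compare the $r$-th summand $\theta_{r,p_s}\,\ell_j/\ell_r$ with the last one, $\theta_{j,p_s}$. Using the explicit formula $\theta_{r,p_s}=\ell_r^{-(n+1)}/((d+1)^rd^{nr})$ one computes
\[
\frac{\theta_{r,p_s}\,\ell_j/\ell_r}{\theta_{j,p_s}}=\left(\frac{\ell_j}{\ell_r}\right)^{n+2}\big((d+1)d^n\big)^{j-r},
\]
and since $\ell_j/\ell_r=\lambda_{r+1}\cdots\lambda_j\le\tau_0^{\,j-r}$, the right-hand side is at most $\rho^{\,j-r}$ with $\rho:=\tau_0^{\,n+2}(d+1)d^n$. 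The key elementary point is that $\rho<1$: from $\tau_0<1/d$ we get $\rho<(d+1)/d^2$, and $d\ge 2$ forces $d^2>d+1$, so in fact $\rho<(d+1)/d^2\le 3/4$. (Only $d\ge 2$ and $\tau_0<1/d$ are used here, not the sharper restriction $d+1<d^{2s}$.) Summing the resulting geometric series gives
\[
p_j=\sum_{r=0}^{j}\theta_{r,p_s}\,\frac{\ell_j}{\ell_r}\le\theta_{j,p_s}\sum_{m=0}^{\infty}\rho^{m}=\frac{\theta_{j,p_s}}{1-\rho},
\]
with $1/(1-\rho)$ depending only on $d=d(s)$, hence only on $s$.

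Finally I would square this inequality and sum, obtaining $\sum_{j=0}^{M}p(Q^j)^2\le(1-\rho)^{-2}\sum_{j=0}^{M}\theta_{j,p_s}^2$, which is the assertion. I do not expect any genuine obstacle: the only delicate point is the elementary verification that $\rho<1$, which is exactly where the hypotheses $d\ge 2$ and $\tau_0<1/d$ enter, together with the observation that the quantity $p(Q^j)$ is independent of the particular cube in $\QQ^j$.
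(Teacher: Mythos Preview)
Your proof is correct and in fact proves the stronger pointwise estimate $p_j\lesssim\theta_{j,p_s}$, from which the lemma follows trivially. The computation of the ratio, the bound $\rho<(d+1)/d^2<1$ for $d\ge 2$, and the geometric summation are all fine; the resulting constant $(1-\rho)^{-1}\le d^2/(d^2-d-1)$ indeed depends only on $d=d(s)$.

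The paper takes a different route: it applies Cauchy--Schwarz to $p_j=\sum_r(\theta_{r,p_s}/\sqrt{\ell_r})(\ell_j/\sqrt{\ell_r})$, then interchanges the order of summation and uses twice that $\sum_{j\ge r}\ell_j/\ell_r$ is a convergent geometric series. This yields only the summed inequality, not your pointwise bound. Your approach is more elementary (no Cauchy--Schwarz, no Fubini) and strictly stronger: in particular it shows that every scale is ``good'' in the sense of the later Section~\ref{sec5}, so that the good/bad scale dichotomy there becomes vacuous in this specific setting with $\lambda_j\le\tau_0<1/d$. The paper's argument, adapted from Tolsa's work on more general Cantor constructions, does not exploit the uniform upper bound on the $\lambda_j$ as sharply as you do.
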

	\begin{proof}
		It follows from Cauchy-Schwarz's inequality and the following computation:
		\begin{align*}
			\sum_{j=0}^{M}p(Q^j)^2 &=\sum_{j=0}^{M}\bigg( \sum_{r=0}^j \theta_{r,p_s}\frac{\ell_j}{\ell_r}\bigg)^2=\sum_{j=0}^{M}\ell_j^2\bigg( \sum_{r=0}^j \frac{\theta_{r,p_s}}{\sqrt{\ell_r}}\frac{1}{\sqrt{\ell_r}}\bigg)^2\\
			&\leq \sum_{j=0}^{M}\ell_j^2\bigg(\sum_{r=0}^j \frac{\theta_{r,p_s}^2}{\ell_r}\bigg) \bigg( \sum_{r=0}^j \frac{1}{\ell_r}\bigg)\leq \sum_{j=0}^{M}\bigg(\sum_{r=0}^j \theta_{r,p_s}^2 \frac{\ell_j}{\ell_r}\bigg) \bigg( \sum_{r=0}^j \frac{1}{d^r}\bigg)\\
			&\lesssim \sum_{j=0}^M \sum_{r=0}^j \theta_{r,p_s}^2\frac{\ell_j}{\ell_r} =\sum_{r=0}^M\theta_{r,p_s}^2\sum_{j=r}^M\frac{\ell_j}{\ell_r}\lesssim \sum_{r=0}^M\theta_{r,p_s}^2.
		\end{align*}
	\end{proof}
	The previous three lemmas will be used to prove the next auxiliary estimate, analogous to \cite[Theorem 3.1]{T1}.  Once proved, it will imply Lemma \ref{lem4.1.7}, the main result of this subsection.
	\begin{lem}
		\label{lem4.1.4}
		The following estimate holds:
		\begin{equation*}
			\| \pazocal{P}^s \mu_k \|^2\lesssim_{\tau_0} \sum_{j=0}^k \theta_{j,p_s}^2.
		\end{equation*}
	\end{lem}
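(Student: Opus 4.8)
The plan is to decompose $\pazocal{P}^s\mu_k$ using the martingale-type difference operators $D_Q$ and exploit the orthogonality established in Lemma \ref{lem1.3}. First I would write, using Lemma \ref{lem1.2} with $R=Q^0$ (so that $\int \pazocal{P}^s\mu_k\,\dd\mu_k=0$) and the telescoping identity $D_jf = S_{j+1}f - S_jf$,
\begin{equation*}
	\pazocal{P}^s\mu_k = S_k(\pazocal{P}^s\mu_k) = \sum_{j=0}^{k-1} D_j(\pazocal{P}^s\mu_k) + S_0(\pazocal{P}^s\mu_k) = \sum_{Q\in\pazocal{Q}^\ast} D_Q(\pazocal{P}^s\mu_k),
\end{equation*}
where I have also used that $S_0(\pazocal{P}^s\mu_k)=0$ by Lemma \ref{lem1.2} with $R=Q^0$. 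Strictly speaking I should be slightly careful: $\|\pazocal{P}^s\mu_k\|$ refers to the supremum over truncations of $\|\pazocal{P}^s_{\mu_k,\varepsilon}1\|$, but the decomposition above is a pointwise $\mu_k$-a.e. identity for the non-truncated kernel since $\pazocal{P}^s\mu_k\in L^1_{\text{loc}}(\mu_k)$ by Lemma \ref{lem1.1}, and on $\text{supp}(\mu_k)=E_{k,p_s}$ the truncations at any scale below $\ell_k$ agree with $\pazocal{P}^s\mu_k$ up to a harmless error, so it suffices to bound $\|S_k(\pazocal{P}^s\mu_k)\|^2$ as in \eqref{eq1.5}.

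Next I would invoke the orthogonality from \eqref{eq1.5}, namely
\begin{equation*}
	\|S_k(\pazocal{P}^s\mu_k)\|^2 = \sum_{Q\in\pazocal{Q}^\ast} \|D_Q(\pazocal{P}^s\mu_k)\|^2,
\end{equation*}
and then estimate each summand by Remark \ref{rem4.1.5}, which gives $\|D_Q(\pazocal{P}^s\mu_k)\|^2 \lesssim_{\tau_0} p(Q)^2\mu_k(Q)$. Summing over $Q\in\pazocal{Q}^j$ and using that the cubes of generation $j$ partition $\text{supp}(\mu_k)$ so that $\sum_{Q\in\pazocal{Q}^j}\mu_k(Q)=1$, together with the fact that $p(Q)=p(Q^j)$ depends only on the generation $j$, yields
\begin{equation*}
	\|S_k(\pazocal{P}^s\mu_k)\|^2 \lesssim_{\tau_0} \sum_{j=0}^{k-1} p(Q^j)^2 \sum_{Q\in\pazocal{Q}^j}\mu_k(Q) = \sum_{j=0}^{k-1} p(Q^j)^2 \lesssim \sum_{j=0}^{k} \theta_{j,p_s}^2,
\end{equation*}
where the last inequality is precisely Lemma \ref{lem4.1.3} with $M=k-1$.

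I do not expect a serious obstacle here, since all the hard work has been front-loaded into Lemmas \ref{lem4.1.1}, \ref{lem4.1.2}, \ref{lem4.1.3} and the orthogonality of the $D_Q$; the main point requiring a little care is the reduction from the truncated-operator definition of $\|\pazocal{P}^s\mu_k\|$ to the pointwise identity $\pazocal{P}^s\mu_k = S_k(\pazocal{P}^s\mu_k)$ on $E_{k,p_s}$, i.e. justifying that the truncation parameter $\varepsilon$ may be taken smaller than $\ell_k$ without affecting the $L^2(\mu_k)$ norm (the contribution of the kernel on $\{|\ox-\oy|_{p_s}\le\varepsilon\}$ inside a single cube $Q^k$ is controlled exactly as term II in the proof of Lemma \ref{lem1.1}, uniformly in $\varepsilon$). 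Everything else is assembling the estimates in the order above.
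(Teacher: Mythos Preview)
Your first displayed identity $\pazocal{P}^s\mu_k = S_k(\pazocal{P}^s\mu_k)$ is simply false, and this is the gap. The operator $S_k$ replaces a function by its $\mu_k$-average on each cube of $\pazocal{Q}^k$; it is a projection onto piecewise-constant functions. The function $\pazocal{P}^s\mu_k$ is \emph{not} constant on cubes $Q\in\pazocal{Q}^k$, so $\pazocal{P}^s\mu_k\neq S_k(\pazocal{P}^s\mu_k)$ pointwise on $E_{k,p_s}$. Your discussion of truncations is a red herring: the issue is not whether the untruncated integral makes sense (it does, by Lemma \ref{lem1.1}), but that averaging loses information. What you have actually proved is only
\[
\|S_k(\pazocal{P}^s\mu_k)\|^2\lesssim_{\tau_0}\sum_{j=0}^{k}\theta_{j,p_s}^2,
\]
which is one ingredient of the paper's proof, not the full statement.

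To close the gap you must control $\|\pazocal{P}^s\mu_k - S_k(\pazocal{P}^s\mu_k)\|$. This is exactly what the paper does: for each $Q\in\pazocal{Q}^k$ and $\ox\in Q$ one splits $\pazocal{P}^s\mu_k(\ox)=\pazocal{P}^s_{\mu_k}\chi_Q(\ox)+\pazocal{P}^s_{\mu_k}\chi_{\mathbb{R}^{n+1}\setminus Q}(\ox)$. The local piece satisfies $|\pazocal{P}^s_{\mu_k}\chi_Q(\ox)|\lesssim\theta_{k,p_s}$ by the polar-integration estimate (here $s>1/2$ is used), while the nonlocal piece differs from its average $S_Q(\pazocal{P}^s\mu_k)$ by at most $p(Q)$ thanks to Lemma \ref{lem4.1.1} and Lemma \ref{lem1.2}. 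Summing over $Q\in\pazocal{Q}^k$ gives $\|\pazocal{P}^s\mu_k - S_k(\pazocal{P}^s\mu_k)\|^2\lesssim \theta_{k,p_s}^2+p(Q^k)^2$, and combining with your bound on $\|S_k(\pazocal{P}^s\mu_k)\|$ plus Lemma \ref{lem4.1.3} finishes the proof. So your argument is not wrong in spirit---it recovers the ``martingale'' half of the estimate---but it is incomplete: the missing half is precisely the control of the oscillation of $\pazocal{P}^s\mu_k$ within the finest cubes.
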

	\begin{proof}
		Begin by noticing that \eqref{eq1.5} and Remark \ref{rem4.1.5} imply
		\begin{align*}
			\big\| S_k (\pazocal{P}^s \mu_k) \big\|^2 &= \sum_{Q\in \pazocal{Q}\setminus{\pazocal{Q}^k}} \big\| D_Q (\pazocal{P}^s \mu_k) \big\|^2\lesssim \sum_{Q\in \pazocal{Q}\setminus{\pazocal{Q}^k}} p(Q)^2\mu(Q) \\
			&= \sum_{j=0}^{k-1}\sum_{Q^j\in \pazocal{Q}^j} p(Q^j)^2\mu(Q^j) = \sum_{j=0}^{k-1}p(Q^j)^2.
		\end{align*}
		Moreover, by Lemma \ref{lem1.2} and Lemma \ref{lem4.1.1} we also have for each $Q\in \pazocal{Q}^k$ and $\ox\in Q$,
		\begin{align*}
			\big\rvert S_Q(\pazocal{P}^s \mu_k)(\ox) &- \chi_Q\pazocal{P}^s_{\mu_k}\chi_{\mathbb{R}^{n+1}\setminus{Q}}(\ox) \big\rvert \\
			&= \frac{1}{\mu(Q)} \int_Q\big\rvert \pazocal{P}^s_{\mu_k}\chi_{\mathbb{R}^{n+1}\setminus{Q}}(\oy) - \pazocal{P}^s_{\mu_k}\chi_{\mathbb{R}^{n+1}\setminus{Q}}(\ox)\big\rvert\dd \mu_k(\oy)\lesssim p(Q).
		\end{align*}
		In addition, for each $Q\in \pazocal{Q}^k$ and $\ox\in Q$, \cite[Lemma 2.2]{MPr} and polar integration yield,
		\begin{equation*}
			\big\rvert \pazocal{P}^s_{\mu_k}\chi_Q (\ox) \big\rvert \lesssim \frac{1}{|E_{k,p_s}|}\int_Q\frac{\dd \oy}{|\ox-\oy|_{p_s}^{n+1}}\lesssim \frac{\ell_k}{|E_{k,p_s}|}=\theta_{k,p_s}.
		\end{equation*}
		Notice the need of $s>1/2$ in the previous estimate. Combining the three above computations and Lemma \ref{lem4.1.3} we finally conclude:
		\begin{align*}
			\|\pazocal{P}^s \mu_k\|^2 &= \sum_{Q\in \pazocal{Q}^k}\|\chi_Q\pazocal{P}^s \mu_k \|^2 \leq 2\sum_{Q\in \pazocal{Q}^k}\Big( \|\chi_Q\pazocal{P}^s_{\mu_k}\chi_Q \|^2 + \|\chi_Q\pazocal{P}^s_{\mu_k}\chi_{\mathbb{R}^{n+1}\setminus{Q}} \|^2 \Big)\\
			&\lesssim \sum_{Q\in \pazocal{Q}^k}\Big( \|\chi_Q\pazocal{P}^s_{\mu_k}\chi_Q \|^2 + \|\chi_Q\pazocal{P}^s_{\mu_k}\chi_{\mathbb{R}^{n+1}\setminus{Q}} -S_Q(\pazocal{P}^s \mu_k) \|^2 + \|S_Q(\pazocal{P}^s \mu_k) \|^2 \Big)\\
			&\lesssim \sum_{Q\in \pazocal{Q}^k}\theta_{k,p_s}^2\mu_k(Q)+\sum_{Q\in \pazocal{Q}^k}p(Q)^2\mu_k(Q) + \|S_k(\pazocal{P}^s \mu_k) \|^2\\
			&\lesssim \theta_{k,p_s}^2+p(Q^k)^2+\sum_{j=0}^{k-1}p(Q^j)^2\lesssim \sum_{j=0}^kp(Q^j)^2\lesssim \sum_{j=0}^{k}\theta_{j,p_s}^2.
		\end{align*}
	\end{proof}
	
	Notice that we have just proved an $L^2(\mu_k)$-bound for $\pazocal{P}^s \mu_k = \pazocal{P}^s_{\mu_k}\chi_{Q^0}$. Our next goal is to obtain a bound for $\pazocal{P}^s_{\mu_k}\chi_{Q^m}$ for any $s$-parabolic cube $Q^m$ of the $m$-th generation, $0 < m \leq k$, generalizing the estimate of Lemma \ref{lem4.1.4} if $m=0$. But as it is pointed out in \cite[\textsection 3]{T1}, the procedure to obtain the estimate for $\pazocal{P}^s_{\mu_k}\chi_{Q^0}$ already illustrates the computations needed to deduce the corresponding estimate for $\pazocal{P}^s_{\mu_k}\chi_{Q^m}$. This is due to the \textit{self-similarity} of the Cantor set we consider.
	
	Let us tackle first the case $0<m<k$ (the arguments that follow will be general enough to allow us to set $m=0$ and recover all the previous results). Fix a cube $Q^m$ of the $m$-th generation and consider the following truncation of $\mu_k$,
	\begin{equation*}
		\mu_{k,m}:=\frac{1}{\mu_k(Q^m)}\mu_k|_{Q^m}.
	\end{equation*}
	It is clear that Lemma \ref{lem1.1} and Lemma \ref{lem1.2} are also valid in this setting. More precisely, performing essentially the same computations we get $\pazocal{P}^s_{\mu_{k,m}}\chi_R\in L^1_{\text{loc}}(\mu_{k,m})$ as well as
	\begin{equation}
		\label{eq4.1.2}
		\int_{R}\pazocal{P}^s_{\mu_{k,m}}\chi_{R}\, \dd \mu_{k,m}=0, \hspace{0.75cm} \forall R \in \pazocal{Q}.
	\end{equation}
	We also consider the set (analogous to $\pazocal{Q}$)
	\begin{equation*}
		\pazocal{Q}(m):=\bigcup_{j=m}^k\pazocal{Q}^j\cap Q^m,
	\end{equation*}
	and the functions (analogous to $S_Q, S_j, D_Q$ and $D_j$) defined for $f\in L^1_{\text{loc}}(\mu_{k,m})$,
	\begin{align*}
		S_Q^mf(\ox)&:=\Bigg( \frac{1}{\mu_{k,m}(Q)}\int_Q f\,\dd \mu_{k,m} \Bigg)\chi_Q(\ox), \hspace{2.6cm} Q\in \pazocal{Q}(m),\\
		S_j^mf(\ox)&:=\sum_{Q\in \pazocal{Q}^j\cap Q^m} S_{Q}^mf(\ox), \hspace{4.6cm} m\leq j \leq k,\\
		D_Q^mf(\ox)&:=\Bigg(\sum_{P\in \pazocal{CH}(Q)}S_P^mf(\ox)\Bigg)-S_Q^mf(\ox), \hspace{2.6cm} Q\in \pazocal{Q}(m)\setminus{(\pazocal{Q}^k\cap Q^m)},\\
		D_j^mf(\ox)&:=\sum_{Q\in \pazocal{Q}^j\cap Q^m}D_Q^mf(\ox)=S_{j+1}^mf(\ox)-S_j^m f(\ox), \hspace{0.75cm} m\leq j \leq k-1.
	\end{align*}
	It is clear that $\int D^m_Qf\, \dd \mu_{k,m}=0$ for any $Q\in  \pazocal{Q}(m)\setminus{(\pazocal{Q}^k\cap Q^m)}$. So analogously to the case $m=0$, $D_{Q_1}^mf$ and $D_{Q_2}^mf$ are orthogonal in an $L^2(\mu_{k,m})$-sense if $Q_1\neq Q_2$ belong to $\pazocal{Q}(m)\setminus{(\pazocal{Q}^k\cap Q^m)}$. Thus, Lemma \ref{lem1.3} also admits a generalization in the current setting. Moreover, if $f:=\pazocal{P}^s \mu_{k,m}$, by \eqref{eq4.1.2} we have a $Q^m$-truncated version of \eqref{eq1.5},
	\begin{equation}
		\label{eq4.1.3}
		\big\| S_k^m (\pazocal{P}^s \mu_{k,m}) \big\|_{L^2(\mu_{k,m})}^2 = \sum_{Q\in \pazocal{Q}(m)\setminus{(\pazocal{Q}^k\cap Q^m})} \big\| D_Q^m (\pazocal{P}^s \mu_{k,m}) \big\|_{L^2(\mu_{k,m})}^2.
	\end{equation}
	The previous relations allow to generalize Lemmas \ref{lem4.1.1} and \ref{lem4.1.2}. We will only give the details of the proof of the former since they suffice to illustrate that the methods of proof are analogous to those presented for the aforementioned lemmas.
	\begin{lem}
		\label{lem4.1.5}
		Let $Q\in \pazocal{Q}^j\cap Q^m$ for $m\leq j \leq k$, and $\ox, \ox'\in Q$. Then,
		\begin{equation*}
			\big\rvert \pazocal{P}^s_{\mu_{k,m}}\chi_{\mathbb{R}^{n+1}\setminus{Q}} (\ox) - \pazocal{P}^s_{\mu_{k,m}}\chi_{\mathbb{R}^{n+1}\setminus{Q}} (\ox') \big\rvert \lesssim_{\tau_0} \frac{1}{\mu_k(Q^m)} p_m(Q),
		\end{equation*}
		where now $p_m(R):=\sum_{r=m}^{j}\theta_{r,p_s}\frac{\ell_j}{\ell_r}$, for $R\in \pazocal{Q}^j$.
	\end{lem}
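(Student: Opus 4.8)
The plan is to transcribe almost verbatim the argument of Lemma~\ref{lem4.1.1}, making only the two adjustments forced by replacing $\mu_k$ with its renormalized restriction $\mu_{k,m}=\mu_k(Q^m)^{-1}\,\mu_k|_{Q^m}$. First, since $\text{supp}\,\mu_{k,m}\subset Q^m$, the nested chain of $s$-parabolic cubes through which one telescopes the increment of $\nabla_xP_s$ now terminates at generation $m$ rather than $0$: one writes $\ox\in Q_k(\ox)\subset\cdots\subset Q_{m+1}(\ox)\subset Q_m(\ox)=Q^m$ and decomposes $(\mathbb{R}^{n+1}\setminus Q)\cap Q^m=Q^m\setminus Q$ exactly into the annuli $Q_{r-1}(\ox)\setminus Q_r(\ox)$ with $m+1\le r\le j$. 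Second, for $r\ge m$ one has $\mu_{k,m}(Q_r(\ox))=\mu_k(Q^m)^{-1}\,\theta_{r,p_s}\ell_r^{n+1}$, and it is precisely this normalization factor $\mu_k(Q^m)^{-1}$ that is pulled out of every estimate, producing the prefactor in the statement.

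Concretely, I would first dispose of the case $j=m$, where $Q=Q^m$ carries all the mass of $\mu_{k,m}$ and both terms on the left vanish. For $j>m$, write $\ox=(x,t)$, $\ox'=(x',t')$, $\widehat x:=(x',t)$, and bound the left-hand side by $\int_{\mathbb{R}^{n+1}\setminus Q}|\nabla_xP_s(\ox-\oy)-\nabla_xP_s(\widehat x-\oy)|\dd\mu_{k,m}(\oy)+\int_{\mathbb{R}^{n+1}\setminus Q}|\nabla_xP_s(\widehat x-\oy)-\nabla_xP_s(\ox'-\oy)|\dd\mu_{k,m}(\oy)$. Applying the mean value theorem coordinatewise, together with the CZ estimates for $\Delta_xP_s$ and $\partial_t\nabla_xP_s$ recalled in the Introduction and the bounds $|x-x'|\lesssim\ell(Q)$, $|t-t'|\lesssim\ell(Q)^{2s}$, this is
\[
\lesssim\;\ell(Q)\sum_{r=m+1}^{j}\int_{Q_{r-1}(\ox)\setminus Q_r(\ox)}\frac{\dd\mu_{k,m}(\oy)}{|\ox-\oy|_{p_s}^{n+2}}+\ell(Q)^{2s}\sum_{r=m+1}^{j}\int_{Q_{r-1}(\ox)\setminus Q_r(\ox)}\frac{\dd\mu_{k,m}(\oy)}{|\ox-\oy|_{p_s}^{n+2s+1}}.
\]
Using that $|\ox-\oy|_{p_s}\gtrsim_{\tau_0}\ell_{r-1}$ on $Q_{r-1}(\ox)\setminus Q_r(\ox)$ (the separation built into the Cantor construction) and inserting $\mu_{k,m}(Q_r(\ox))=\mu_k(Q^m)^{-1}\theta_{r,p_s}\ell_r^{n+1}$, after reindexing the two sums become
\[
\lesssim_{\tau_0}\;\frac{1}{\mu_k(Q^m)}\Bigl(\tfrac{\ell(Q)}{\ell_{j-1}}+\tfrac{\ell(Q)^{2s}}{\ell_{j-1}^{2s}}\Bigr)\sum_{r=m}^{j-1}\theta_{r,p_s}\,\tfrac{\ell_{j-1}}{\ell_r}=\frac{1}{\mu_k(Q^m)}\bigl(\lambda_j+\lambda_j^{2s}\bigr)p_m(\widehat Q)\lesssim\frac{1}{\mu_k(Q^m)}\,\frac{\ell(Q)}{\ell(\widehat Q)}\,p_m(\widehat Q),
\]
where $\widehat Q$ is the parent of $Q$, the inequality $(\ell_{j-1}/\ell_r)^{2s}\le\ell_{j-1}/\ell_r$ (valid since $2s\ge1$) lets one recognize $p_m(\widehat Q)$ in the temporal sum, and $\lambda_j^{2s}\le\lambda_j=\ell(Q)/\ell(\widehat Q)$ is used in the last step.

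To finish, I would invoke the same telescoping identity as in Lemma~\ref{lem4.1.1}: since $\lambda_j\,p_m(\widehat Q)=\sum_{r=m}^{j-1}\theta_{r,p_s}\ell_j/\ell_r=p_m(Q)-\theta_{j,p_s}\le p_m(Q)$, one has $p_m(\widehat Q)\le\lambda_j^{-1}p_m(Q)=(\ell(\widehat Q)/\ell(Q))\,p_m(Q)$, and substituting this absorbs the factor $\ell(Q)/\ell(\widehat Q)$ and yields the claimed bound $\lesssim_{\tau_0}\mu_k(Q^m)^{-1}p_m(Q)$. I do not anticipate any genuine obstacle: the only points that need attention are keeping the endpoints of the nested chain and of the annular sum at generation $m$ instead of $0$, checking that $\mu_k(Q^m)^{-1}$ is the only new factor introduced, and noting that the Fubini/absolute-convergence steps underlying these manipulations are licensed by the $\mu_{k,m}$-analogue of Lemma~\ref{lem1.1} recorded just before the statement. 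This is precisely the sense in which the authors observe that this lemma's proof \emph{suffices to illustrate} the general case.
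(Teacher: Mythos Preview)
Your proposal is correct and follows essentially the same route as the paper's proof: both assume $j>m$ (the case $j=m$ being trivial), split the kernel increment via the intermediate point $\widehat{x}=(x',t)$, telescope over the annuli $Q_{r-1}(\ox)\setminus Q_r(\ox)$ for $m+1\le r\le j$, pull out the factor $\mu_k(Q^m)^{-1}$ from $\mu_{k,m}$, and conclude via $\frac{\ell(Q)}{\ell(\widehat{Q})}p_m(\widehat{Q})\le p_m(Q)$. Your write-up is simply a more explicit rendering of the chain of inequalities the paper records.
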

	\begin{proof}
		The proof is analogous to that of Lemma \ref{lem4.1.1}, but taking into account the support of $\mu_{k,m}$. Indeed, assume $j>m$ and notice that
		\begin{align*}
			\big\rvert \pazocal{P}^s_{\mu_{k,m}}\chi_{\mathbb{R}^{n+1}\setminus{Q}} &(\ox) - \pazocal{P}^s_{\mu_{k,m}}\chi_{\mathbb{R}^{n+1}\setminus{Q}} (\ox') \big\rvert\\
			&\hspace{-2cm}\lesssim \ell(Q)\sum_{r=m+1}^{j}\int_{Q_{r-1}(\ox)\setminus{Q_{r}(\ox)}} \frac{\dd \mu_{k,m}(\oy)}{|\ox-\oy|_{p_s}^{n+2}}+\ell(Q)^{2s}\sum_{r=m+1}^{j}\int_{Q_{r-1}(\ox)\setminus{Q_{r}(\ox)}} \frac{\dd \mu_{k,m}(\oy)}{|\ox-\oy|_{p_s}^{n+2s+1}}\\
			&\hspace{-2cm} = \frac{\ell(Q)}{\mu_k(Q^m)}\sum_{r=m+1}^{j}\int_{Q_{r-1}(\ox)\setminus{Q_{r}(\ox)}} \frac{\dd \mu_{k}(\oy)}{|\ox-\oy|_{p_s}^{n+2}}+\frac{\ell(Q)^{2s}}{\mu_k(Q^m)}\sum_{r=m+1}^{j}\int_{Q_{r-1}(\ox)\setminus{Q_{r}(\ox)}} \frac{\dd \mu_{k}(\oy)}{|\ox-\oy|_{p_s}^{n+2s+1}}\\
			&\hspace{-2cm} \lesssim \frac{1}{\mu(Q^m)}\frac{\ell(Q)}{\ell(\widehat{Q})}p_m(\widehat{Q})\lesssim  \frac{1}{\mu_k(Q^m)} p_m(Q).
		\end{align*}
	\end{proof}
	\begin{lem}
		\label{lem4.1.6}
		If $Q\in \pazocal{Q}^j\cap Q^m$ with $m\leq j<k$, then
		\begin{equation*}
			\bigg\rvert  S_Q^m(\pazocal{P}^s \mu_{k,m}) - \sum_{P\in \pazocal{CH}(Q)} S_P^m(\pazocal{P}^s \mu_{k,m}) \bigg\rvert\lesssim_{\tau_0} \frac{1}{\mu_k(Q^m)}p_m(Q).
		\end{equation*}
	\end{lem}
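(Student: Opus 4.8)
The plan is to mirror the proof of Lemma \ref{lem4.1.2} almost verbatim, replacing $\mu_k$ by $\mu_{k,m}$, the function $p$ by $p_m$, Lemma \ref{lem1.2} by its $Q^m$-truncated form \eqref{eq4.1.2}, and Lemma \ref{lem4.1.1} by Lemma \ref{lem4.1.5}. The only new feature is that the factor $1/\mu_k(Q^m)$ now propagates through every estimate, exactly as it already does in Lemma \ref{lem4.1.5}.

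First I would use \eqref{eq4.1.2} to record that $S_Q^m(\pazocal{P}^s_{\mu_{k,m}}\chi_Q)=0$ and $S_P^m(\pazocal{P}^s_{\mu_{k,m}}\chi_P)=0$ for every $P\in\pazocal{CH}(Q)$; decomposing $\pazocal{P}^s \mu_{k,m}=\pazocal{P}^s_{\mu_{k,m}}\chi_{\mathbb{R}^{n+1}\setminus{Q}}+\pazocal{P}^s_{\mu_{k,m}}\chi_Q$ and, for each child, $\pazocal{P}^s_{\mu_{k,m}}\chi_{\mathbb{R}^{n+1}\setminus{P}}=\pazocal{P}^s_{\mu_{k,m}}\chi_{\mathbb{R}^{n+1}\setminus{Q}}+\pazocal{P}^s_{\mu_{k,m}}\chi_{Q\setminus{P}}$ then yields
\begin{equation*}
\bigg\rvert S_Q^m(\pazocal{P}^s \mu_{k,m})-\sum_{P\in\pazocal{CH}(Q)}S_P^m(\pazocal{P}^s \mu_{k,m})\bigg\rvert\leq \sum_{P\in\pazocal{CH}(Q)}\big\rvert S_P^m(\pazocal{P}^s_{\mu_{k,m}}\chi_{Q\setminus{P}})\big\rvert+\bigg\rvert S_Q^m(\pazocal{P}^s_{\mu_{k,m}}\chi_{\mathbb{R}^{n+1}\setminus{Q}})-\sum_{P\in\pazocal{CH}(Q)}S_P^m(\pazocal{P}^s_{\mu_{k,m}}\chi_{\mathbb{R}^{n+1}\setminus{Q}})\bigg\rvert.
\end{equation*}
For the first sum, exactly as in Lemma \ref{lem4.1.2} one has $|\pazocal{P}^s_{\mu_{k,m}}\chi_{Q\setminus{P}}(\ox)|\lesssim \mu_{k,m}(Q)/\ell(Q)^{n+1}=\theta_{j,p_s}/\mu_k(Q^m)\leq p_m(Q)/\mu_k(Q^m)$ for $\ox\in P$ (using $\mu_{k,m}(Q)=\mu_k(Q)/\mu_k(Q^m)$ and $p_m(Q)\geq \theta_{j,p_s}$), and summing over the $(d+1)d^n$ children only costs a constant depending on $n,s$. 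For the second term I would expand the averages over $P$ and over $Q$ as double averages over children $P,P'$ and insert the translation $\tau_{P\to P'}$ taking $P$ onto $P'$ (legitimate by the self-similar placement of the children), reducing it to a finite sum of expressions $\frac{1}{\mu_{k,m}(Q)}\int_Q\big\rvert\pazocal{P}^s_{\mu_{k,m}}\chi_{\mathbb{R}^{n+1}\setminus{Q}}(\ox)-\pazocal{P}^s_{\mu_{k,m}}\chi_{\mathbb{R}^{n+1}\setminus{Q}}(\tau_{P\to P'}(\ox))\big\rvert\dd\mu_{k,m}(\ox)$; Lemma \ref{lem4.1.5} bounds each of these by $\lesssim_{\tau_0}\frac{1}{\mu_k(Q^m)}p_m(Q)$, and the finitely many pairs $(P,P')$ again contribute only a constant. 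Adding the two estimates gives the claim.

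This is essentially bookkeeping, so I do not expect a genuine obstacle; the one point deserving care is that the growth and polar-integration estimates used for $\mu_k$ in Lemmas \ref{lem1.1} and \ref{lem4.1.2} persist for $\mu_{k,m}$ with precisely the stated $1/\mu_k(Q^m)$ loss. This is immediate from self-similarity: rescaling $Q^m$ to the unit cube turns $\mu_{k,m}$ into the uniform probability measure on the Cantor set of $k-m$ generations built from the ratios $(\lambda_{m+1},\ldots,\lambda_k)$, so every bound invoked is one already established (with $\theta_{j,p_s}$ replaced by $\theta_{j,p_s}/\mu_k(Q^m)$ once the rescaling is undone), and the hypothesis $s>1/2$ enters in exactly the same place as before.
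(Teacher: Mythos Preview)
Your proposal is correct and follows exactly the approach the paper intends: the paper explicitly states that Lemma \ref{lem4.1.6} is proved analogously to Lemma \ref{lem4.1.2} (with Lemma \ref{lem4.1.5} replacing Lemma \ref{lem4.1.1} and \eqref{eq4.1.2} replacing Lemma \ref{lem1.2}), and omits the details. Your write-up is precisely that analogy carried out, with the $1/\mu_k(Q^m)$ factor tracked correctly.
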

	As a direct consequence of Lemma \ref{lem4.1.6} we also have
	\begin{equation}
		\label{eq4.1.4}
		\big\| D_Q^m(\pazocal{P}^s \mu_{k,m}) \big\|_{L^2(\mu_{k,m})}^2 \lesssim_{\tau_0} \frac{1}{\mu_k(Q^m)^2} p_m(Q)^2\mu_{k,m}(Q),
	\end{equation}
	analogous to the estimate of Remark \ref{rem4.1.5}. Combining all of the above results and observations, we finally deduce the result we were interested in
	\begin{lem}
		\label{lem4.1.7}
		The following estimate holds for any $0<m<k$:
		\begin{equation*}
			\| \pazocal{P}^s \mu_{k,m} \|_{L^2(\mu_{k,m})}^2\lesssim_{\tau_0} \frac{1}{\mu_k(Q^m)^2}\sum_{j=0}^k \theta_{j,p_s}^2.
		\end{equation*}
	\end{lem}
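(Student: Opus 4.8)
The plan is to repeat, almost word for word, the proof of Lemma~\ref{lem4.1.4}, now with $\mu_k$ replaced by $\mu_{k,m}$, the operators $S_Q,S_j,D_Q$ replaced by their $Q^m$-truncated versions $S_Q^m,S_j^m,D_Q^m$, the quantity $p(Q)$ replaced by $\mu_k(Q^m)^{-1}p_m(Q)$, the relation \eqref{eq1.5} replaced by \eqref{eq4.1.3}, Remark~\ref{rem4.1.5} replaced by \eqref{eq4.1.4}, Lemma~\ref{lem1.2} replaced by \eqref{eq4.1.2}, and Lemma~\ref{lem4.1.1} replaced by Lemma~\ref{lem4.1.5}; every sum over a generation $\pazocal{Q}^j$ becomes a sum over $\pazocal{Q}^j\cap Q^m$. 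Throughout one uses that $p_m(R)$ depends only on the generation of $R$ and that $\sum_{Q\in\pazocal{Q}^j\cap Q^m}\mu_{k,m}(Q)=1$ for $m\le j\le k$.

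Concretely, I would first combine \eqref{eq4.1.3} with \eqref{eq4.1.4} to obtain $\|S_k^m(\Ps\mu_{k,m})\|_{L^2(\mu_{k,m})}^2\lesssim_{\tau_0}\mu_k(Q^m)^{-2}\sum_{j=m}^{k-1}p_m(Q^j)^2$. Then, for a cube $Q\in\pazocal{Q}^k\cap Q^m$ and $\ox\in Q$, I would use \eqref{eq4.1.2} (with $R=Q$) together with Lemma~\ref{lem4.1.5} to get $|S_Q^m(\Ps\mu_{k,m})(\ox)-\chi_Q(\ox)\,\pazocal{P}^s_{\mu_{k,m}}\chi_{\mathbb{R}^{n+1}\setminus Q}(\ox)|\lesssim_{\tau_0}\mu_k(Q^m)^{-1}p_m(Q)$, and, exactly as in Lemma~\ref{lem4.1.4} (this is where $s>1/2$ is needed), \cite[Lemma~2.2]{MPr} and polar integration in the $s$-parabolic metric to get $|\pazocal{P}^s_{\mu_{k,m}}\chi_Q(\ox)|\lesssim\mu_k(Q^m)^{-1}\ell_k/|E_{k,p_s}|=\theta_{k,p_s}/\mu_k(Q^m)$.

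With these three ingredients in hand, I would write $\|\Ps\mu_{k,m}\|_{L^2(\mu_{k,m})}^2=\sum_{Q\in\pazocal{Q}^k\cap Q^m}\|\chi_Q\Ps\mu_{k,m}\|_{L^2(\mu_{k,m})}^2$, split each summand via $\pazocal{P}^s_{\mu_{k,m}}\chi_Q$, $\pazocal{P}^s_{\mu_{k,m}}\chi_{\mathbb{R}^{n+1}\setminus Q}-S_Q^m(\Ps\mu_{k,m})$ and $S_Q^m(\Ps\mu_{k,m})$, and insert the bounds above (together with $\sum_{Q\in\pazocal{Q}^k\cap Q^m}\mu_{k,m}(Q)=1$ and $\theta_{k,p_s}\le p_m(Q^k)$) to reach $\|\Ps\mu_{k,m}\|_{L^2(\mu_{k,m})}^2\lesssim_{\tau_0}\mu_k(Q^m)^{-2}\sum_{j=m}^{k}p_m(Q^j)^2$. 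The final step is to invoke the $Q^m$-analogue of Lemma~\ref{lem4.1.3}, namely $\sum_{j=m}^k p_m(Q^j)^2\lesssim\sum_{j=m}^k\theta_{j,p_s}^2\le\sum_{j=0}^k\theta_{j,p_s}^2$, which follows from the identical Cauchy--Schwarz computation with all indices running from $m$ instead of from $0$; the two geometric series $\sum_{m\le r\le j}\ell_j/\ell_r$ and $\sum_{r\le j\le k}\ell_j/\ell_r$ are still $\lesssim 1$ because $\lambda_i\le\tau_0<1/d$.

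I do not expect a genuine obstacle: all the truncated analogues needed are already provided by \eqref{eq4.1.2}--\eqref{eq4.1.4} and Lemmas~\ref{lem4.1.5}--\ref{lem4.1.6}, and the only missing piece, the truncated version of Lemma~\ref{lem4.1.3}, is a one-line modification. The one point requiring care is the bookkeeping of the factor $\mu_k(Q^m)^{-1}$: it enters linearly in every pointwise estimate above and hence quadratically once squared and integrated, which is exactly the prefactor $\mu_k(Q^m)^{-2}$ in the statement. (Taking $m=0$ throughout recovers Lemma~\ref{lem4.1.4}, since $\mu_k(Q^0)=1$.)
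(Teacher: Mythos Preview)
Your proposal is correct and follows essentially the same approach as the paper's proof: the paper also combines \eqref{eq4.1.3}--\eqref{eq4.1.4}, the pointwise bounds from \eqref{eq4.1.2} and Lemma~\ref{lem4.1.5}, and the estimate $|\pazocal{P}^s_{\mu_{k,m}}\chi_Q|\lesssim\theta_{k,p_s}/\mu_k(Q^m)$, and splits $\|\Ps\mu_{k,m}\|_{L^2(\mu_{k,m})}^2$ exactly as you indicate. The only cosmetic difference is in the final step: rather than proving a truncated version of Lemma~\ref{lem4.1.3}, the paper simply observes $p_m(Q^j)\le p(Q^j)$ (extending the sum defining $p_m$ down to $r=0$) and then invokes Lemma~\ref{lem4.1.3} as stated.
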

	\begin{proof}
		By relations \eqref{eq4.1.3} and \eqref{eq4.1.4} we now have
		\begin{align*}
			\big\| S_k^m (\pazocal{P}^s \mu_{k,m}) \big\|_{L^2(\mu_{k,m})}^2 &= \sum_{Q\in \pazocal{Q}(m)\setminus{(\pazocal{Q}^k\cap Q^m})} \big\| D_Q^m (\pazocal{P}^s \mu_{k,m}) \big\|_{L^2(\mu_{k,m})}^2 \\
			&\hspace{-1.5cm}\lesssim \frac{1}{\mu_k(Q^m)^2}\sum_{Q\in \pazocal{Q}(m)\setminus{(\pazocal{Q}^k\cap Q^m})} p_m(Q)^2\mu_{k,m}(Q)\\
			&\hspace{-1.5cm}=\frac{1}{\mu_k(Q^m)^2} \sum_{j=m}^{k-1}\sum_{Q^j\in \pazocal{Q}^j\cap Q^m} p_m(Q^j)^2\mu_{k,m}(Q^j) = \frac{1}{\mu_k(Q^m)^2}\sum_{j=m}^{k-1}p_m(Q^j)^2.
		\end{align*}
		Moreover, \eqref{eq4.1.2} and Lemma \ref{lem4.1.5} imply that for each $Q\in \pazocal{Q}^k\cap Q^m$ and $\ox\in Q$,
		\begin{align*}
			\big\rvert S_Q^m(\pazocal{P}^s \mu_{k,m})(\ox) &- \chi_Q\pazocal{P}^s_{\mu_{k,m}}\chi_{\mathbb{R}^{n+1}\setminus{Q}}(\ox) \big\rvert \lesssim \frac{1}{\mu_k(Q^m)} p_m(Q).
		\end{align*}
		It is also clear that for $Q\in \pazocal{Q}^k\cap Q^m$ and $\ox\in Q$, we have $|\pazocal{P}^s_{\mu_{k,m}}\chi_Q (\ox)|\lesssim \theta_{k,p_s}/\mu_k(Q^m)$. All in all, we finally conclude:
		\begin{align*}
			\|\pazocal{P}^s_{\mu_{k,m}}&1\|_{L^2(\mu_{k,m})}^2\\
			&\hspace{-0.5cm}\lesssim \sum_{Q\in \pazocal{Q}^k\cap Q^m}\Big( \|\chi_Q\pazocal{P}^s_{\mu_{k,m}}\chi_Q \|_{L^2(\mu_{k,m})}^2 + \|\chi_Q\pazocal{P}^s_{\mu_{k,m}}\chi_{\mathbb{R}^{n+1}\setminus{Q}} -S_Q^m(\pazocal{P}^s \mu_{k,m}) \|_{L^2(\mu_{k,m})}^2 \\
			&\hspace{8.75cm}+ \|S_Q^m(\pazocal{P}^s \mu_{k,m}) \|_{L^2(\mu_{k,m})}^2 \Big)\\
			&\hspace{-0.5cm}\lesssim \sum_{Q\in \pazocal{Q}^k\cap Q^m} \frac{\theta_{k,p_s}^2}{\mu_k(Q^m)^3} \mu_k(Q)+\sum_{Q\in \pazocal{Q}^k\cap Q^m} \frac{p_m(Q)^2}{\mu_k(Q^m)^3} \mu_k(Q) + \|S_k^m(\pazocal{P}^s \mu_{k,m}) \|_{L^2(\mu_{k,m})}^2\\
			&\hspace{-0.5cm}\lesssim \frac{1}{\mu_k(Q^m)^2}\Bigg( \theta_{k,p_s}^2+p_m(Q^k)^2+\sum_{j=m}^{k-1}p_m(Q^j)^2 \Bigg) \lesssim \frac{1}{\mu_k(Q^m)^2} \sum_{j=m}^kp_m(Q^j)^2\\
			&\hspace{-0.5cm}\leq \frac{1}{\mu_k(Q^m)^2} \sum_{j=0}^k p(Q^j)^2 \lesssim \frac{1}{\mu_k(Q^m)^2} \sum_{j=0}^{k}\theta_{j,p_s}^2,
		\end{align*}
		where for the last inequality we have used Lemma \ref{lem4.1.3}.
	\end{proof}
	Observe that we can rewrite the previous $L^2(\mu_{k,m})$ norm as
	\begin{equation*}
		\|\pazocal{P}^s \mu_{k,m}\|_{L^2(\mu_{k,m})} = \frac{1}{\mu_k(Q^m)^{3/2}} \|\pazocal{P}^s_{\mu_{k}}\chi_{Q^m}\|_{L^2(\mu_{k}|_{Q^m})},
	\end{equation*}
	and the previous result can be restated as
	\begin{equation}
		\label{eq4.1.5}
		\|\pazocal{P}^s_{\mu_{k}}\chi_{Q^m}\|_{L^2(\mu_{k}|_{Q^m})} \lesssim_{\tau_0} \Bigg( \sum_{j=0}^k \theta_{j,p_s}^2 \Bigg)^{1/2}\,\mu_k(Q^m)^{1/2}.
	\end{equation}
	In fact, bearing in mind Lemma \ref{lem4.1.4}, \eqref{eq4.1.5} is also valid for $0\leq m < k$. For the case $m=k$ simply notice that for any $Q^k\in \pazocal{Q}^k$ and $\ox\in Q$, polar integration yields
	\begin{equation*}
		|\pazocal{P}^s_{\mu_{k}}\chi_{Q^k}(\ox)|\lesssim \frac{1}{|E_{k,p_s}|}\int_{Q^k}\frac{\dd \oy}{|\ox-\oy|_{p_s}^{n+1}}\lesssim \frac{\ell_k}{|E_{k,p_s}|}=\theta_{k,p_s},
	\end{equation*}
	so \eqref{eq4.1.5} also holds in this case. Again, we need $s>1/2$ in the above estimate.
	
	Finally, as it is remarked in \cite[\textsection 3]{MT} and \cite[\textsection 3]{T1}, since the support of $\mu_k$ is $\pazocal{Q}^k$, relation \eqref{eq4.1.5} suffices to deduce the same result not only for $Q^m$, but also for any $s$-parabolic cube $Q\subset \mathbb{R}^{n+1}$. 
	Moreover, by the arguments used to prove \eqref{eq4.1.5}, it is clear that such estimate is also valid for the operator $\pazocal{P}^{s,\ast}_{\mu_k}$, associated with the kernel $(\nabla_x P_s)^\ast(\ox):= \nabla_x P_s(-\ox)$. With this, we are finally ready to state the main theorem of this subsection:
	\begin{thm}
		\label{thm4.1.8}
		Let $Q\subset \mathbb{R}^{n+1}$ be any $s$-parabolic cube. Then,
		\begin{equation*}
			\|\pazocal{P}^s_{\mu_{k}}\chi_{Q}\|_{L^2(\mu_{k}|_{Q})}+\|\pazocal{P}^{s,*}_{\mu_{k}}\chi_{Q}\|_{L^2(\mu_{k}|_{Q})} \lesssim_{\tau_0} \Bigg( \sum_{j=0}^k \theta_{j,p_s}^2 \Bigg)^{1/2}\,\mu_k(Q)^{1/2}.
		\end{equation*}
	\end{thm}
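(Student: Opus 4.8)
The plan is to deduce the bound for an arbitrary $s$-parabolic cube from \eqref{eq4.1.5} — the bound for the construction cubes $Q^m$ — by the self-similarity argument of Tolsa and of Mateu--Tolsa, \cite[\textsection 3]{T1} and \cite[\textsection 3]{MT}. The estimate for $\pazocal{P}^{s,\ast}_{\mu_k}$ then requires nothing new: its kernel $(\nabla_x P_s)^\ast$ satisfies the same Calderón--Zygmund bounds and remains spatially antisymmetric in each time slice (being merely supported on $\{t<0\}$ instead of $\{t>0\}$), so Lemmas \ref{lem1.2}, \ref{lem4.1.1} and everything built on them go through verbatim and \eqref{eq4.1.5} holds for $\pazocal{P}^{s,\ast}_{\mu_k}$ as well. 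So fix an $s$-parabolic cube $Q$ meeting $E_{k,p_s}$; let $\pazocal{R}$ be the family of maximal construction cubes contained in $Q$ (a disjoint family, whence $\sum_{R\in\pazocal{R}}\mu_k(R)\le\mu_k(Q)$) and let $F$ be the union of the generation-$k$ cubes that meet $\partial Q$ but are not contained in $Q$. Then, $\mu_k$-a.e. on $E_{k,p_s}$, $\chi_Q=\chi_{G}+\chi_{Q\cap F}$ with $G:=\bigcup_{R\in\pazocal{R}}R\subseteq Q$, so it suffices to estimate the $L^2(\mu_k|_Q)$-norms of $\pazocal{P}^s_{\mu_k}\chi_{G}$ and $\pazocal{P}^s_{\mu_k}\chi_{Q\cap F}$ separately.

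For $\pazocal{P}^s_{\mu_k}\chi_{G}$ one runs the Haar argument of Lemmas \ref{lem4.1.1}--\ref{lem4.1.4} with $\chi_G$ in place of $\chi_{Q^0}$. First, the defining inequality $d+1<d^{2s}$ forces $\tau_0<1/d<(d+1)^{-1/(2s)}$, so the gap parameters $l_d=\frac{1-d\lambda_j}{d-1}$ and $\widetilde l_d=\frac{1-(d+1)\lambda_j^{2s}}{d}$ stay bounded below by positive constants depending only on $s$ and $\tau_0$, which is exactly what keeps all the relevant $s$-parabolic separations of the order of the parent side length. The orthogonality of Lemma \ref{lem1.3} then turns $\|S_k(\pazocal{P}^s_{\mu_k}\chi_G)\|^2$ into a sum of the $\|D_R(\pazocal{P}^s_{\mu_k}\chi_G)\|^2$ with \emph{no loss of constant} across the $\sim k$ scales: for the cubes $R$ that do not meet $\partial G$ the function $\chi_G$ is locally constant near $R$ and $\|D_R(\pazocal{P}^s_{\mu_k}\chi_G)\|\lesssim_{\tau_0}p(R)\,\mu_k(R)^{1/2}$ by the Hölder cancellation of Lemma \ref{lem4.1.1}, exactly as in Lemma \ref{lem4.1.2}; for the (comparatively few) cubes meeting $\partial G$ one adds a further $\lesssim_{\tau_0}\theta_{r,p_s}\,\mu_k(R)^{1/2}$, $r$ being the generation of $R$; and the finest pieces are bounded by $|\pazocal{P}^s_{\mu_k}\chi_{Q^k}(\ox)|\lesssim\theta_{k,p_s}$ for $\ox\in Q^k$ — the estimate from the end of this section, which is the one place where $s>1/2$ enters, since $n+1<n+2s$ is precisely what makes $|\cdot|_{p_s}^{-(n+1)}$ locally $\pazocal{L}^{n+1}$-integrable. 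Summing via Lemma \ref{lem4.1.3}, using $\sum_{R\in\pazocal{R}}\mu_k(R)\le\mu_k(Q)$ and invoking \eqref{eq4.1.5} for the maximal cubes themselves gives $\|\pazocal{P}^s_{\mu_k}\chi_G\|_{L^2(\mu_k|_Q)}\lesssim_{\tau_0}\bigl(\sum_{j=0}^k\theta_{j,p_s}^2\bigr)^{1/2}\mu_k(Q)^{1/2}$. The boundary term $\pazocal{P}^s_{\mu_k}\chi_{Q\cap F}$ is disposed of crudely: $Q\cap F$ lies in the $\ell_k$-neighbourhood of $\partial Q$, and since $\partial Q$ is a bounded union of pieces of hyperplanes while $\mu_k$ is (a normalization of Lebesgue measure on) a product of Cantor sets, the $\mu_k$-mass in the $\ell_p$-neighbourhood of $\partial Q$ is $\lesssim_{\tau_0}d^{-p}\wedge\mu_k(Q)$; combined with the size bound $|\nabla_xP_s|\lesssim|\cdot|_{p_s}^{-(n+1)}$ and the $(n+1)$-growth of $\mu_k$, these slab estimates bound $\|\pazocal{P}^s_{\mu_k}\chi_{Q\cap F}\|_{L^2(\mu_k|_Q)}$ by the same right-hand side, as in \cite[\textsection 3]{T1}.

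The only real subtlety — and the reason this is not immediate — is that $\nabla_xP_s$ has no definite sign, so one cannot simply dominate $|\pazocal{P}^s_{\mu_k}\chi_Q|$ pointwise by $|\pazocal{P}^s_{\mu_k}\chi_{Q^0}|$ or by $|\pazocal{P}^s_{\mu_k}\chi_R|$ and quote \eqref{eq4.1.5}; the boundary pieces produced whenever $\partial Q$ slices through a construction cube must be carried down the self-similar hierarchy, and a naive $|a+b|^2\le 2a^2+2b^2$ at each of the $\sim k$ generations would cost a factor exponential in the number of generations — a loss that is already fatal when all the $\theta_{j,p_s}$ are comparable, in which case the true bound $\sum_j\theta_{j,p_s}^2\approx k$ must not degenerate into $(\sum_j\theta_{j,p_s})^2\approx k^2$. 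What prevents this is exactly the mechanism already used to pass from Lemma \ref{lem4.1.4} to Lemma \ref{lem4.1.7}: the Haar orthogonality of Lemma \ref{lem1.3}, the Hölder cancellation of Lemma \ref{lem4.1.1} (encoded in the weights $p(Q)$, $p_m(Q)$), and Lemma \ref{lem4.1.3}. Since $\operatorname{supp}\mu_k=\bigcup_i Q_i^k$ every interaction takes place between construction cubes, so no genuinely new ingredient is needed beyond this bookkeeping, which is carried out following \cite[\textsection 3]{T1} and \cite[\textsection 3]{MT}.
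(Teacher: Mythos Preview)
Your proposal is correct and follows the same route as the paper: both reduce the arbitrary-cube estimate to \eqref{eq4.1.5} via the self-similarity argument of \cite[\textsection 3]{T1} and \cite[\textsection 3]{MT}, and both observe that the conjugate kernel $(\nabla_xP_s)^\ast$ satisfies the same CZ bounds and spatial antisymmetry, so the entire chain of Lemmas \ref{lem1.2}--\ref{lem4.1.7} carries over verbatim. The paper simply cites those references without elaboration, whereas you have sketched the mechanism (maximal construction cubes inside $Q$, boundary slabs, Haar orthogonality to avoid exponential loss across scales); your additional commentary on why a naive triangle-inequality argument would fail is accurate and helpful, but not something the paper itself spells out.
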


	\section{The lower bound for the capacity}
    \label{sec4}
	
	Bearing in mind \cite[Theorem 3.21]{T3}, it may seem that from Theorem \ref{thm4.1.8} we could directly obtain the desired estimate for the $\Gamma_{\Theta^s}$ capacity of the $s$-parabolic Cantor set. However, such result does not apply to our case, since our ambient space $\mathbb{R}^{n+1}$ is not endowed with the usual Euclidean distance. Nevertheless, there are $T1$-like theorems available in more general settings. More precisely, we may apply \cite[Theorem 2.3]{HyMa}, since $\mathbb{R}^{n+1}$ is geometrically doubling once endowed with the $s$-parabolic distance. In essence, the latter result adapted to our context implies that, for a fixed generation $k$, to control the boundedness of $\pazocal{P}^s_{\mu_k}$ as an $L^2(\mu_k)$-operator it suffices to verify that 
	\begin{enumerate}[nolistsep,leftmargin=*]
		\item[\textit{i}.] $\pazocal{P}^s \mu_k$ and $\pazocal{P}^{s,*} \mu_k$ belong to a certain $s$-parabolic BMO space (that is precised below),
		\item[\textit{ii}.] $\pazocal{P}^s \mu_k$ is $\mu_k$-weakly bounded.
	\end{enumerate}
	In fact, the following observations will also be important to simplify our proof:
	\begin{enumerate}[leftmargin=*]
        \item[\textit{1}.] By Lemma \ref{lem1.2}, the weak boundedness property follows trivially, since any pairing of the form $\langle \chi_R, \pazocal{P}^s_{\mu_k}\chi_R \rangle$ is null, for any $R\subset \mathbb{R}^{n+1}\,$ $s$-parabolic cube.
		\item[\textit{2}.] By the second point of \cite[Remark 2.4]{HyMa}, since the $s$-parabolic distance is a proper distance (and not a \textit{quasi-distance}, as in the statement of \cite[Theorem 2.3]{HyMa}), it suffices to show that $\pazocal{P}^s \mu_k$ and $\pazocal{P}^{s,*} \mu_k$ belong to some $s$-parabolic $\text{BMO}_{\rho,p_s}(\mu_k)$ space, for some $\rho>1$. Recall:
		\begin{defn}
			\label{def4.1.1}
			Given $\rho>1$ and $f\in L^1_{\text{loc}}(\mu_k)$, we say that $f$ belongs to the $\textit{BMO}_{\rho,p_s}(\mu_k)$ space if for some constant $c>0$,
			\begin{equation*}
				\sup_{Q} \frac{1}{\mu_k(\rho Q)} \int_Q \big\rvert f(\ox) - f_{Q,\mu_k}\big\rvert \text{d}\mu_k(\ox) \leq c,
			\end{equation*}
			where the supremum is taken among all $s$-parabolic cubes such that $\mu_k(Q)\neq 0$, and $f_{Q,\mu_k}$ is the average of $f$ in $Q$ with respect to $\mu_k$. The infimum over all values $c$ satisfying the above inequality is the so-called $\textit{BMO}_{\rho,p_s}(\mu_k)$ \textit{norm of} $f$.
		\end{defn}
		\item[\textit{3}.] 
		As it is verified in \cite[Theorem 2.2]{HMPr}, the operator defined through the kernel $\nabla_x P_s$ defines a $(n+1)$-dimensional CZ convolution operator in the $s$-parabolic space $\mathbb{R}^{n+1}$. With this we mean that it satisfies the required bounds of an $(n+1)$-dimensional CZ convolution kernel but changing the usual distance $|\cdot|$ by $|\cdot|_{p_s}$.
		\item[\textit{4}.] In light of the previous observation, we should impose that for each generation $k$, the measure $\mu_k$ presents upper $s$-parabolic growth of degree $n+1$. To satisfy such property, we will assume that there exists an absolute constant $\kappa>0$ so that $\theta_{j,p_s} \leq \kappa$ for every $j\geq 0$. Recall that such condition implied the desired growth restriction for $\mu_k$ with a constant $C$ depending only on $n,s$ and $\kappa$. Renormalizing $\mu$ with such constant, we shall assume $C=1$. With this, and borrowing the notation of \cite[\textsection 2.2]{HyMa}, $\mu_k$ is \textit{upper doubling} with dominating function $r^{n+1}$.
		\item[\textit{5.}] Recall that given $A>0$ and $\mu$ Borel measure on $\mathbb{R}^{n+1}$, we say that an $s$-parabolic cube $Q\subset \mathbb{R}^{n+1}$ has $A$-\textit{small boundary} $($\textit{with respect to $\mu$}$)$ if
		\begin{equation*}
			\mu \big( \{\ox\in 2Q\,:\; \text{dist}_{p_s}(\ox, \partial Q) \leq \alpha\,\ell(Q) \} \big) \leq A\alpha\, \mu(2Q), \hspace{0.75cm} \forall \alpha >0.
		\end{equation*}
	\end{enumerate}
	
	Previous to the main lemma, we prove two additional preliminary results, the first one being an $s$-parabolic version of \cite[Lemma 9.43]{T3} and the second one deals with the existence of \textit{large} doubling balls. It can be understood as a direct consequence of \cite[Lemma 3.2]{Hy}. Recall that for a given real Borel measure $\mu$ in $\mathbb{R}^{n+1}$ and $\alpha,\beta>1$, a $s$-parabolic cube $Q\subset \mathbb{R}^{n+1}$ is said to be $(\alpha,\beta)$-\textit{doubling} (\textit{with respect to $\mu$}) if $\mu(\alpha Q)\leq \beta \mu(Q)$. . Let us remark that in some of the forthcoming statements, the reader will encounter expressions of the form $\alpha Q$, for some $\alpha>0$ and $Q$ an $s$-parabolic cube. This has to be understood as an $s$-\textit{parabolic dilation}: i.e. if $Q=Q_1\times I_Q\subset \mathbb{R}^{n}\times \mathbb{R}$, then $\alpha Q = (\alpha Q_1)\times (\alpha^{2s} I_Q)$.
	
	\begin{lem}
		\label{lem4.1.9}
		Let $\mu$ be a real finite Borel measure on $\mathbb{R}^{n+1}$ and $A(n,s)>0$ some big enough constant. Let $Q\subset \mathbb{R}^{n+1}$ be any fixed $s$-parabolic cube. Then, there exists a concentric $s$-parabolic cube $Q'$ with $Q\subset Q' \subset 1.1Q$ with $A$-small boundary with respect to $\mu$.
	\end{lem}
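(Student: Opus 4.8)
The plan is to follow the classical construction of cubes with small boundary, as in Tolsa's book, adapted to the $s$-parabolic metric. Fix the $s$-parabolic cube $Q$, say $Q = Q_1 \times I_Q \subset \mathbb{R}^n \times \mathbb{R}$ with side length $\ell(Q)$, so that $Q = \ox_Q + [-\ell/2,\ell/2]^n \times [-\ell^{2s}/2, \ell^{2s}/2]$ for an appropriate center $\ox_Q$ (here I must keep in mind the anisotropic scaling: the spatial side is $\ell$, the temporal side is $\ell^{2s}$). For a parameter $t \in [0, \ell/20]$ consider the concentric $s$-parabolic cube $Q_t$ whose spatial side length is $\ell + 2t$ and whose temporal side length is $(\ell+2t)^{2s}$; then $Q \subset Q_t \subset 1.1 Q$ for all such $t$, since $\ell + 2t \le 1.1\,\ell$. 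The idea is to choose $t$ so that the ``shell'' of $Q_t$ carries little $\mu$-mass.

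First I would set up the averaging argument. Partition $[0, \ell/20]$ into $N$ disjoint subintervals $I_1, \ldots, I_N$ of equal length $\ell/(20N)$, with $N$ a large integer to be fixed depending only on $n$ and $s$ (it will be comparable to the constant $A$). For $t$ in the $i$-th subinterval, the shells
\[
    U_t := \{\ox \in 2Q : \mathrm{dist}_{p_s}(\ox, \partial Q_t) \le \alpha\,\ell(Q_t)\}
\]
for the ``boundary scales'' that matter are essentially contained in a fixed-width $s$-parabolic neighborhood of $\partial Q_t$, and as $t$ ranges over $[0,\ell/20]$ these neighborhoods, at a fixed small scale, have bounded overlap — each point of $2Q$ lies in $O(1)$ of them. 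Hence, integrating in $t$ (or summing over the $N$ subintervals), $\sum_{i=1}^N \mu\big(\{\ox \in 2Q : \mathrm{dist}_{p_s}(\ox,\partial Q_{t_i}) \le c\,\ell/N\}\big) \lesssim \mu(2Q)$ for a suitable choice of representative $t_i \in I_i$; by pigeonhole there is an index $i_0$ with $\mu(\{\ox \in 2Q : \mathrm{dist}_{p_s}(\ox, \partial Q_{t_{i_0}}) \le c\,\ell/N\}) \lesssim \mu(2Q)/N$. Set $Q' := Q_{t_{i_0}}$.

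It then remains to upgrade this single-scale estimate to the full ``$A$-small boundary'' inequality, which must hold for \emph{all} $\alpha > 0$ simultaneously. For $\alpha \gtrsim 1/N$ the inequality $\mu(\{\ox \in 2Q' : \mathrm{dist}_{p_s}(\ox,\partial Q') \le \alpha \ell(Q')\}) \le A\alpha\,\mu(2Q')$ is trivial once $A$ is large, since the left side is at most $\mu(2Q') \le C\,\mu(2Q)$ (using $2Q' \subset 2.2 Q \subset C\cdot 2Q$ and upper doubling, or just finiteness of $\mu$) and $A\alpha \ge A/N \ge C$. For $\alpha \le c/N$ one covers $\{\ox \in 2Q' : \mathrm{dist}_{p_s}(\ox,\partial Q') \le \alpha\ell(Q')\}$ by a controlled number — namely $O(1/(N\alpha))$ — of translates at scale comparable to $\ell/N$ of the shell controlled in the pigeonhole step; each contributes $\lesssim \mu(2Q)/N$, giving a total $\lesssim \mu(2Q)/(N^2\alpha) \lesssim \alpha\,\mu(2Q')$ provided $\alpha^2 N^2 \gtrsim 1$... which fails for tiny $\alpha$. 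The cleaner route, which I would actually take, is the standard one: choose $N$ large, and observe that by the \emph{same} averaging/pigeonhole argument applied at every dyadic scale $2^{-m}\ell$ one can show the chosen $Q'$ works, OR — more simply — invoke that the shells at scale $\alpha \ell$ around $\partial Q'$ for varying small $\alpha$ are \emph{nested}, so it suffices to control a geometric sequence of scales $\alpha_m = 2^{-m}/N$; redoing the bounded-overlap count at each such scale and summing the geometric series in $\alpha_m$ yields the bound $A\alpha\,\mu(2Q')$ with $A$ absolute. The main obstacle is precisely this last point: making the bounded-overlap / pigeonhole mechanism produce a bound linear in $\alpha$ uniformly down to $\alpha \to 0$, rather than just at the single threshold scale — one has to be careful that the anisotropic $s$-parabolic dilation $\alpha Q = (\alpha Q_1) \times (\alpha^{2s} I_Q)$ does not spoil the comparison $\mathrm{dist}_{p_s}(\ox, \partial Q_t) \approx$ (distance to spatial faces) $+$ (distance to temporal faces)$^{1/2s}$, but since $1/2 < s \le 1$ the exponent $2s \in (1,2]$ keeps all these estimates under control with constants depending only on $s$. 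I expect the write-up to mirror \cite[Lemma 9.43]{T3} almost verbatim, with the Euclidean cube dilations replaced by $s$-parabolic ones and every ``$\ell(Q)$'' in a temporal slot replaced by ``$\ell(Q)^{2s}$''.
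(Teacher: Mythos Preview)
Your proposal has a genuine gap at the exact point you flag yourself: upgrading the single-scale pigeonhole estimate to the $A$-small boundary condition for \emph{every} $\alpha>0$. The pigeonhole/bounded-overlap argument you describe produces a $Q'=Q_{t_{i_0}}$ with $\mu(\{\ox\in 2Q:\mathrm{dist}_{p_s}(\ox,\partial Q')\le c\ell/N\})\lesssim \mu(2Q)/N$, i.e.\ control at the single scale $\alpha\approx 1/N$. Neither of your suggested fixes closes the gap: running the averaging at every dyadic scale $2^{-m}\ell$ produces, for each $m$, a possibly \emph{different} good $t$, and you give no mechanism to select one $t$ that works for all $m$; the nested-shells\,+\,geometric-series sketch likewise presupposes scale-by-scale control of a fixed $Q'$ that you have not established.

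The paper's proof avoids this entirely by recasting the condition through the one-dimensional Hardy--Littlewood maximal function. After restricting $\sigma:=\mu|_{2Q}$, one pushes $\sigma$ forward to $\mathbb{R}$ along each coordinate---using $\pi_j(\ox)=x_j$ for the spatial directions and, crucially, $\pi'_{n+1}(\ox)=x_{n+1}^{1/(2s)}$ for the temporal one so that the anisotropic shell becomes a standard interval---to obtain measures $\nu_j,\widetilde{\nu}_j$ and $\nu:=\sum(\nu_j+\widetilde{\nu}_j)$. The $A$-small boundary condition for the cube of half-side $a$ then reads $M\nu(a)\le A\|\sigma\|/\ell(Q)$, and the point is that a \emph{single} maximal-function bound $M\nu(a)\le C$ automatically encodes the estimate for all radii simultaneously. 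The weak-$(1,1)$ inequality for $M$ shows the bad set of $a$'s has Lebesgue measure $\lesssim \ell(Q)/A$, so for $A$ large some $a\in[\ell(Q),1.05\ell(Q)]$ works. This maximal-function reduction is the missing idea in your outline; once you see it, the all-scales issue evaporates, and the temporal anisotropy is handled in one line by the $1/(2s)$-power projection rather than by ad hoc comparisons.
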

	\begin{proof}
		We shall follow the proof of \cite[Lemma 9.43]{T3} and adapt it to our $s$-parabolic setting. Assume that $Q$ is centered at the origin and write $\sigma:=\mu|_{2Q}$. For $a\in \mathbb{R}$ and $1\leq j \leq n+1$, let $H_j(a)$ be the hyperplane
		\begin{equation*}
			H_j(a):=\big\{ \ox\in \mathbb{R}^{n+1}\,:\; x_j=a \big\},
		\end{equation*}
		where we convey $x_{n+1}:=t$. For $\delta>0$, write $U_\delta$ the (Euclidean) $\delta$-neighborhood of a set. The existence of $Q'$ will follow from the existence of some $a\in[\ell(Q), 1.05\ell(Q)]$ such that
		\begin{align}
			\label{eq4.1.6}
			\frac{1}{\eta\ell(Q)}\sigma\Big( U_{\eta\ell(Q)}\big(H_j(\pm a)\big)\Big)&\leq A \frac{\|\sigma\|}{\ell(Q)}, \hspace{0.75cm} \forall \eta >0, \; j=1, \ldots, n,\\
			\frac{1}{\eta\ell(Q)}\sigma\Big( U_{\eta^{2s}\ell(Q)^{2s}}\big(H_{n+1}(\pm a)\big)\Big)&\leq A \frac{\|\sigma\|}{\ell(Q)}, \hspace{0.75cm} \forall \eta >0.
			\label{eq4.1.7}
		\end{align}
		Recall that $\|\sigma\|:=|\sigma|(\mathbb{R}^{n+1})$, where $|\sigma|$ is the variation of $\sigma$. Let $\pi_j,\widetilde{\pi}_j:\mathbb{R}^{n+1}\to\mathbb{R}$ be the projections defined by $\pi_j(\ox):=x_j,\, \widetilde{\pi}_j(\ox):=-x_j$, for $j=1,\ldots n$; as well as $\pi_{n+1}':\mathbb{R}^n\times[0,\infty)\to \mathbb{R}$ given by $\pi_{n+1}'(\ox):=x_{n+1}^{\frac{1}{2s}}$, and $\widetilde{\pi}_{n+1}':\mathbb{R}^{n}\times(-\infty,0]\to\mathbb{R}$ given by $\widetilde{\pi}_{n+1}'(\ox):=(-x_{n+1})^{\frac{1}{2s}}$. Consider the image measures
		\begin{align*}
			\nu_j&:=\pi_j\# \sigma, \hspace{1.55cm} \widetilde{\nu}_j:=\widetilde{\pi}_j\# \sigma, \hspace{0.75cm} j=1, \ldots, n,\\
			\nu_{n+1}&:=\pi_{n+1}'\# \sigma, \hspace{0.7cm} \widetilde{\nu}_{n+1}:= \widetilde{\pi}_{n+1}'\# \sigma,
		\end{align*}
		where $f \# \mu (\cdot) := \mu(f^{-1}(\cdot))$. This way, conditions \eqref{eq4.1.6} and \eqref{eq4.1.7} can be simply rewritten as
		\begin{align*}
			\frac{1}{\eta\ell(Q)}\nu_j\Big( I\big( a,\eta\ell(Q) \big) \Big)\leq A\frac{\|\sigma\|}{\ell(Q)}, \hspace{0.75cm} \frac{1}{\eta\ell(Q)}\widetilde{\nu}_j\Big( I\big( a,\eta\ell(Q) \big) \Big)\leq A\frac{\|\sigma\|}{\ell(Q)}, \hspace{0.75cm} \forall\eta>0,
		\end{align*}
		where now $1\leq j\leq n+1$ and $I(y,\ell)$ denotes the real interval centered at $y$ with length $2\ell$. In fact, the above condition can be rephrased as
		\begin{equation}
			\label{eq4.1.8}
			M\nu_j(a)\leq A\frac{\|\sigma\|}{\ell(Q)}, \hspace{0.75cm} M\widetilde{\nu}_j(a)\leq A\frac{\|\sigma\|}{\ell(Q)}, \hspace{0.75cm} j=1, \ldots, n+1,
		\end{equation}
		where $M\equiv M_{\pazocal{L}^1}$ is maximal Hardy-Littlewood operator in $\mathbb{R}$. We now define the measure $\nu:=\sum_{j=1}^{n+1}\nu_j+\widetilde{\nu}_j$. Observe that $\|\nu_j\|=\|\widetilde{\nu}_j\|=\|\sigma\|$ for $j=1,\ldots, n$, and $\|\nu_{n+1}\|+\|\widetilde{\nu}_{n+1}\|=\|\sigma\|$. Therefore $\|\nu\|=(2n+1)\|\sigma\|$. Notice that if we prove
		\begin{equation*}
			M\nu(a)\leq A \frac{\|\sigma\|}{\ell(Q)}=A\frac{\|\nu\|}{(2n+1)\ell(Q)},
		\end{equation*}
		condition \eqref{eq4.1.8} will hold. But due to \cite[Theorem 2.5]{T3} (a standard result concerning the weak boundedness of $M$ in a general non-doubling setting),
		\begin{equation*}
			\pazocal{L}^1\Bigg( \bigg\{ a\in \mathbb{R}\,:\; M\nu(a)>A\frac{\|\nu\|}{(2n+1)\ell(Q)} \bigg\} \Bigg) \leq C \frac{(2n+1)\ell(Q)}{A}.
		\end{equation*}
		So for $A$ big enough there is $a\in[\ell(Q), 1.05\ell(Q)]$ with $M\nu(a)\leq A\frac{\|\nu\|}{(2n+1)\ell(Q)}$.
	\end{proof}
	
	\begin{lem}
		\label{lem4.1.10}
		Let $Q\subset \mathbb{R}^{n+1}$ be an $s$-parabolic cube and $\mu$ a real Borel measure on $\mathbb{R}^{n+1}$ that has upper $s$-parabolic growth of degree $n+1$ with constant $1$. Then, there exists $j_0\in \mathbb{N}$ such that $Q_0:=3^{j_0}Q$ is $(3,3^{n+2})$-doubling.
	\end{lem}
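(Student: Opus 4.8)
\noindent\emph{Proof plan.} The plan is to argue by contradiction, exploiting that the upper $s$-parabolic growth of degree $n+1$ only permits the $\mu$-mass of an $s$-parabolic cube to be multiplied by $3^{n+1}$ under a $3$-dilation, whereas a failure of the $(3,3^{n+2})$-doubling property forces the \emph{strictly} larger multiplicative gain $3^{n+2}$ at every step. Iterating the latter produces, after cancelling the admissible factor $3^{(n+1)j}$, geometric growth with ratio $3$, which is incompatible with the finiteness built into the growth bound. Note that the role of $3^{n+2}$ is merely that its exponent exceeds $n+1$ by a positive amount; any constant of the form $3^{n+1+\varepsilon}$ would do, and $3^{n+2}$ is chosen for concreteness.

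First I would dispose of the trivial case $\mu\equiv 0$, for which any $j_0$ works (and which also lets me assume below that we deal with a nonzero measure; as always, the growth hypothesis is to be read for $|\mu|$, so I write $\mu$ for $|\mu|$). Since the $s$-parabolic dilates $3^jQ=(3^jQ_1)\times(3^{2sj}I_Q)$ increase to all of $\mathbb{R}^{n+1}$ as $j\to\infty$, continuity from below gives $\mu(3^jQ)\to\|\mu\|>0$, so there is an index $j_1\geq 0$ with $\mu(3^{j_1}Q)>0$. Now suppose, towards a contradiction, that $3^jQ$ fails to be $(3,3^{n+2})$-doubling for every $j\geq 0$, i.e. $\mu(3^{j+1}Q)>3^{n+2}\mu(3^jQ)$ for all $j$. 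Iterating from $j_1$ yields $\mu(3^jQ)>3^{(n+2)(j-j_1)}\mu(3^{j_1}Q)$ for all $j\geq j_1$. On the other hand, $3^jQ$ is an $s$-parabolic cube of side length $3^j\ell(Q)$, so the upper $s$-parabolic growth of degree $n+1$ with constant $1$ (after the renormalization fixed in observation~4 above) gives $\mu(3^jQ)\leq\bigl(3^j\ell(Q)\bigr)^{n+1}=3^{(n+1)j}\ell(Q)^{n+1}$. Combining the two estimates and cancelling the common power $3^{(n+1)j}$ leaves $3^{\,j}\mu(3^{j_1}Q)<3^{(n+2)j_1}\ell(Q)^{n+1}$ for every $j\geq j_1$; letting $j\to\infty$ contradicts $\mu(3^{j_1}Q)>0$. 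Hence some $Q_0=3^{j_0}Q$ is $(3,3^{n+2})$-doubling.

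I do not expect a genuine obstacle here; the two points that need a little care are (i) the exponent bookkeeping — one must record that a single $s$-parabolic $3$-dilation multiplies the spatial side length by $3$, hence the growth bound by $3^{n+1}$, and that this is strictly smaller than the forced gain $3^{n+2}$, this strict gap of one unit in the exponent being exactly what makes the geometric divergence win — and (ii) the degenerate cases in which $\mu$, or its restriction to small dilates of $Q$, vanishes, which is why one first passes to the smallest index $j_1$ with $\mu(3^{j_1}Q)>0$. As indicated in the statement, the conclusion can alternatively be read off directly from \cite[Lemma 3.2]{Hy}, but the self-contained contradiction above is short enough to include.
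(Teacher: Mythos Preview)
Your proof is correct and is precisely the self-contained contradiction argument that underlies the cited \cite[Lemma~3.2]{Hy}; the paper's proof consists solely of that one-line citation (with the parameters $C_\lambda=2^{n+1}$, $\alpha=3$, $\beta=3^{n+2}$), so your approach is the same, merely unpacked. The exponent bookkeeping and the handling of the degenerate case $\mu(3^jQ)=0$ are both in order.
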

	\begin{proof}
		Apply \cite[Lemma 3.2]{Hy} with $C_\lambda:=2^{n+1}$, $\alpha=3$ and $\beta=3^{n+2}$.
	\end{proof}
	We are now ready to prove the result we were initially interested in:
	\begin{lem}
		\label{lem4.1.11}
		Let $Q\subset \mathbb{R}^{n+1}$ be any $s$-parabolic cube and $\mu$ a compactly supported positive Borel measure. Assume that $\mu$ has upper $s$-parabolic growth of degree $n+1$ with constant 1 and that $|\langle \chi_R, \Ps_\mu \chi_R \rangle| \lesssim 1$ for any $R\subset \mathbb{R}^{n+1}$\, $s$-parabolic cube with $A$-small boundary, $A=A(n,s)$. Then,
		\begin{equation*}
			\|\pazocal{P}^s \mu\|_{\text{\normalfont{BMO}}_{3,p_s}(\mu)} + \|\pazocal{P}^{s,*} \mu\|_{\text{\normalfont{BMO}}_{3,p_s}(\mu)} \lesssim 1+ \frac{\|\pazocal{P}^s_{\mu}\chi_{Q}\|_{L^2(\mu|_{Q})}}{\mu(Q)^{1/2}}
		\end{equation*}
	\end{lem}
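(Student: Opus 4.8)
Write $N:=\mu(Q)^{-1/2}\,\|\pazocal{P}^s_{\mu}\chi_{Q}\|_{L^2(\mu|_{Q})}$, so that the asserted right-hand side is $1+N$. The plan is as follows. Since $\int_R|f-f_{R,\mu}|\dd\mu\le 2\int_R|f-c|\dd\mu$ for every constant $c$, it suffices to produce, for each $s$-parabolic cube $R$ with $\mu(R)>0$, a single constant $c_R$ with $\mu(3R)^{-1}\int_R|\pazocal{P}^s\mu-c_R|\dd\mu\lesssim 1+N$; the estimate for $\pazocal{P}^{s,*}\mu$ then follows verbatim, since (Section~\ref{sec4}) $(\nabla_xP_s)^{\ast}$ is again an $(n+1)$-dimensional $s$-parabolic Calder\'on-Zygmund kernel with the same bounds. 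Because the normalising denominator is $\mu(3R)$ and not $\mu(R)$, the trivial inequalities $\mu(R)\le\mu(2R)\le\mu(3R)$ already take care of non-doubling cubes $R$ at this outer level; Lemmas~\ref{lem4.1.9} and~\ref{lem4.1.10} — passage to an $A$-small boundary concentric cube, and existence of a $(3,3^{n+2})$-doubling dilate — will instead be used inside the local $L^2$ estimate isolated in the last step.

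For a fixed $R$ I would split $\pazocal{P}^s\mu=\pazocal{P}^s_\mu\chi_{2R}+\pazocal{P}^s_\mu\chi_{\mathbb{R}^{n+1}\setminus 2R}$ and take $c_R:=\pazocal{P}^s_\mu\chi_{\mathbb{R}^{n+1}\setminus 2R}(\ox_R)$, with $\ox_R$ the center of $R$. For the far term, decompose $\mathbb{R}^{n+1}\setminus 2R$ into the $s$-parabolic annuli $2^{j+1}R\setminus 2^jR$, $j\ge1$; on each of them the H\"older bound for $\nabla_xP_s$ of order $2\zeta=\min\{1,2s\}$ recalled in Section~\ref{sec2} applies, as $|\ox-\ox_R|_{p_s}\lesssim\ell(R)\le\tfrac12|\ox_R-\oy|_{p_s}$ there, and together with the upper $s$-parabolic growth of $\mu$ of degree $n+1$ with constant $1$ it yields, for $\ox\in R$,
\begin{equation*}
\big|\pazocal{P}^s_\mu\chi_{\mathbb{R}^{n+1}\setminus 2R}(\ox)-c_R\big|\lesssim\sum_{j\ge1}\frac{\ell(R)^{2\zeta}\big(2^j\ell(R)\big)^{n+1}}{\big(2^j\ell(R)\big)^{n+1+2\zeta}}\lesssim 1,
\end{equation*}
so the far term contributes $\lesssim\mu(R)/\mu(3R)\le1$. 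For the near term, Cauchy-Schwarz and $\mu(R)\le\mu(2R)\le\mu(3R)$ give
\begin{equation*}
\frac{1}{\mu(3R)}\int_R\big|\pazocal{P}^s_\mu\chi_{2R}\big|\dd\mu\le\frac{\|\pazocal{P}^s_\mu\chi_{2R}\|_{L^2(\mu|_{2R})}}{\mu(2R)^{1/2}},
\end{equation*}
so everything reduces to the \emph{local $L^2$ estimate} $\|\pazocal{P}^s_\mu\chi_{P}\|_{L^2(\mu|_{P})}\lesssim(1+N)\,\mu(P)^{1/2}$ for the cubes $P=2R$ that occur.

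This local $L^2$ estimate is, to my mind, the heart of the matter, and I would prove it by comparing $P$ with $Q$. If $2P$ already contains $\text{supp}\,\mu$, then $\pazocal{P}^s_\mu\chi_P$ is a truncation of $\pazocal{P}^s_\mu\chi_Q$ and the bound is the hypothesis on $Q$; otherwise one writes $\chi_P=\chi_Q-\chi_{Q\setminus P}+\chi_{P\setminus Q}$, controls $\pazocal{P}^s_\mu\chi_Q$ on $P$ through the hypothesis, and for the two remaining indicators handles the part outside $2P$ exactly as in the far-term estimate, the ``at-scale'' part near $\partial P$ through the $A$-small boundary property — using $|\pazocal{P}^s_\mu\chi_{2P\setminus P}(\ox)|\lesssim 1+\log^{+}\!\big(\ell(P)/\text{dist}_{p_s}(\ox,\partial P)\big)$, whose $L^2(\mu|_P)$-norm is $\lesssim\mu(2P)^{1/2}$ by a layer-cake estimate against the small-boundary bound — and the innermost, scale-$\ell(P)$ piece through the weak-boundedness hypothesis $|\langle\chi_R,\pazocal{P}^s_\mu\chi_R\rangle|\lesssim1$ on $A$-small boundary cubes, the $(3,3^{n+2})$-doubling dilates of Lemma~\ref{lem4.1.10} intervening to avoid a loss of $\mu(3R)/\mu(R)$ on non-doubling cubes. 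I expect the genuine difficulty to be precisely this transfer of the single $L^2$ bound available on $Q$ into uniform local $L^2$ control on all $s$-parabolic cubes: it is here that the weak boundedness, the small-boundary cubes, the doubling dilates and the $(n+1)$-growth are all needed together, whereas everything else is routine $s$-parabolic Calder\'on-Zygmund bookkeeping.
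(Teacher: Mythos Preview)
Your first two paragraphs are essentially the paper's approach: the far/near decomposition, the annular estimate using the H\"older bound on $\nabla_xP_s$ plus growth, and the Cauchy--Schwarz reduction to a local $L^2$ quantity are exactly what the paper does. The paper subtracts the average over a doubling $A$-small-boundary dilate $\widehat{Q}$ rather than the centre value and splits into five pieces rather than two, which is why Lemmas~\ref{lem4.1.9} and~\ref{lem4.1.10} enter at that stage rather than later, but the content is the same.

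The gap is in your third paragraph, and it stems from a misreading of the (admittedly ambiguous) statement. The $Q$ on the right-hand side is \emph{not} one fixed cube: in the paper's proof, for each test cube $P$ in the BMO supremum one chooses $Q$ with $A$-small boundary and $P\subset Q\subset 1.1P$, and the bound on the $P$-term involves $N(Q)$ for \emph{that} $Q$. What is actually established is
\[
\|\pazocal{P}^s\mu\|_{\text{BMO}_{3,p_s}(\mu)}\ \lesssim\ 1+\sup_{Q}\frac{\|\pazocal{P}^s_\mu\chi_Q\|_{L^2(\mu|_Q)}}{\mu(Q)^{1/2}},
\]
the supremum over (say) $A$-small-boundary cubes; this is precisely what is needed, since Theorem~\ref{thm4.1.8} bounds the right-hand side uniformly in $Q$. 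Under this reading your ``local $L^2$ estimate'' with $P=2R$ is just $N(2R)$ itself, and the whole last paragraph becomes unnecessary.

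The transfer argument you sketch --- deducing $\|\pazocal{P}^s_\mu\chi_P\|_{L^2(\mu|_P)}\lesssim(1+N)\,\mu(P)^{1/2}$ for \emph{all} $P$ from the hypothesis on a \emph{single} $Q$ --- cannot work and is in fact false in general. If $P$ is far from $Q$, or at a very different scale, the decomposition $\chi_P=\chi_Q-\chi_{Q\setminus P}+\chi_{P\setminus Q}$ gives nothing: neither $\|\pazocal{P}^s_\mu\chi_Q\|_{L^2(\mu|_P)}$ nor $\|\pazocal{P}^s_\mu\chi_{Q\setminus P}\|_{L^2(\mu|_P)}$ is controlled by $\|\pazocal{P}^s_\mu\chi_Q\|_{L^2(\mu|_Q)}$, and the weak-boundedness hypothesis only controls \emph{diagonal} pairings $\langle\chi_R,\pazocal{P}^s_\mu\chi_R\rangle$, not $L^2$ norms. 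Concretely, one can arrange a growth-$1$ measure $\mu$ for which $N(Q)$ is small (take $Q$ where $\mu$ is nearly trivial) while $\|\pazocal{P}^s\mu\|_{\text{BMO}_{3,p_s}(\mu)}$ is large; your uniform local $L^2$ estimate would force the opposite.
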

	
	\begin{proof}
		We give the details to estimate $ \|\pazocal{P}^s \mu\|_{\text{BMO}_{3,p_s}(\mu)}$, since the arguments can be directly adapted for $\|\pazocal{P}^{s,*} \mu\|_{\text{BMO}_{3,p_s}(\mu)}$. We clarify that the arguments below are inspired by those given for \cite[Proposition 9.45]{T3}. 
        
        Let $A=A(n,s)>0$ be big enough (as in Lemma \ref{lem4.1.9}) and consider an $s$-parabolic cube $Q$ with $A$-small boundary. By Lemma \ref{lem4.1.10} let $Q_0:=3^{j_0}Q$ be a $(3,3^{n+2})$-doubling $s$-parabolic cube (with respect to $\mu$) with the minimal $j_0\in \mathbb{N}$ such that this property is satisfied. That is, we require that $\mu(3^{j}Q)>3^{n+2}\mu(3^{j-1}Q)$, for $j=1,\ldots, j_0-1$. Iterating the previous inequality we also deduce
		\begin{equation}
			\label{eq4.1.9}
			\mu(3^jQ)\leq \frac{\mu(3^{j_0-1}Q)}{3^{(n+2)(j_0-1-j)}}, \hspace{0.5cm} \text{for}\; j=1,\ldots, j_0-1.
		\end{equation}
		By Lemma \ref{lem4.1.9} we can take $\widehat{Q}$ with $A$-small boundary concentric with $Q_0$ such that $Q_0\subset \widehat{Q} \subset 1.1Q_0$. Since $2\widehat{Q}\subset 3Q_0$, it is clear that $\widehat{Q}$ is $(2,3^{n+2})$-doubling. Assume that for any $s$-parabolic cube $Q$ with $A$-small boundary we prove the estimate
		\begin{equation}
			\label{eq4.1.10}
			\Bigg\rvert \bigg( \int_Q\Big\rvert \pazocal{P}^s \mu-(\pazocal{P}^s \mu)_{\widehat{Q},\mu} \Big\rvert^2 \dd \mu \bigg)^{1/2}-\bigg( \int_Q \big\rvert \pazocal{P}^s_{\mu}\chi_Q \big\rvert^2 \dd \mu \bigg)^{1/2} \Bigg\rvert \leq C \mu(2Q)^{1/2},
		\end{equation}
		for some constant $C(n,s)$ say bigger than 1, where recall that $(\pazocal{P}^s \mu)_{\widehat{Q},\mu}$ is the average of $\pazocal{P}^s \mu$ in $\widehat{Q}$ with respect to $\mu$. Then, by Cauchy-Schwarz's inequality we infer that for any $s$-parabolic cube $Q$ with $A$-small boundary,
		\begin{equation*}
			\int_Q \Big\rvert \pazocal{P}^s \mu-(\pazocal{P}^s \mu)_{\widehat{Q},\mu} \Big\rvert \dd \mu\lesssim \bigg(C+ \frac{\|\pazocal{P}^s_{\mu}\chi_{Q}\|_{L^2(\mu|_{Q})}}{\mu(Q)^{1/2}}\bigg) \mu(2Q) \lesssim \bigg(1+ \frac{\|\pazocal{P}^s_{\mu}\chi_{Q}\|_{L^2(\mu|_{Q})}}{\mu(Q)^{1/2}}\bigg) \mu(2Q).
		\end{equation*}
		Now observe for an arbitrary $s$-parabolic cube $P$, we can take $Q$ with $A$-small boundary concentric with $P$ and such that $P\subset Q \subset 1.1P$. Hence,
		\begin{align*}
			\int_P \Big\rvert \pazocal{P}^s \mu -(\pazocal{P}^s \mu)_{P,\mu} \Big\rvert \dd \mu &\leq \int_P \Big\rvert \pazocal{P}^s \mu-(\pazocal{P}^s \mu)_{\widehat{Q},\mu} \Big\rvert \dd \mu +\Big\rvert(\pazocal{P}^s \mu)_{P,\mu} -(\pazocal{P}^s \mu)_{\widehat{Q},\mu} \Big\rvert\mu(P)\\
			&\leq \int_Q \Big\rvert \pazocal{P}^s \mu-(\pazocal{P}^s \mu)_{\widehat{Q},\mu} \Big\rvert \dd \mu +\Big( \Big\rvert \pazocal{P}^s \mu -(\pazocal{P}^s \mu)_{\widehat{Q},\mu} \Big\rvert \Big)_{P,\mu}\mu(P)\\
			&\leq 2\int_Q \Big\rvert \pazocal{P}^s \mu-(\pazocal{P}^s \mu)_{\widehat{Q},\mu} \Big\rvert \dd \mu\lesssim \bigg( 1+\frac{\|\pazocal{P}^s_{\mu}\chi_{Q}\|_{L^2(\mu|_{Q})}}{\mu(Q)^{1/2}}\bigg)\mu(3P).
		\end{align*}
		Therefore, it suffices to prove \eqref{eq4.1.10} in order to deduce the desired result. Begin by noticing that the triangle inequality applied to the left-hand side of \eqref{eq4.1.10} yields
		\begin{align*}
			\Bigg\rvert \bigg( &\int_Q\Big\rvert \pazocal{P}^s \mu-(\pazocal{P}^s \mu)_{\widehat{Q},\mu} \Big\rvert^2 \dd \mu \bigg)^{1/2}-\bigg( \int_Q \big\rvert \pazocal{P}^s_{\mu}\chi_Q \big\rvert^2 \dd \mu \bigg)^{1/2} \Bigg\rvert\\
			&\leq \Bigg( \int_Q \Big\rvert \pazocal{P}^s \mu-(\pazocal{P}^s \mu)_{\widehat{Q},\mu}-\pazocal{P}^s_{\mu}\chi_Q \Big\rvert^2 \dd \mu \Bigg)^{1/2} =\Bigg( \int_Q \Big\rvert \pazocal{P}^s_{\mu}\chi_{\mathbb{R}^{n+1}\setminus{Q}}-(\pazocal{P}^s \mu)_{\widehat{Q},\mu} \Big\rvert^2 \dd \mu \Bigg)^{1/2}.
		\end{align*}
		For each $\ox\in Q$ write the previous integrand as follows:
		\begin{align}
			\label{eq4.1.11}
			\pazocal{P}^s_{\mu}\chi_{\mathbb{R}^{n+1}\setminus{Q}}(\ox)-(\pazocal{P}^s \mu)_{\widehat{Q},\mu} &= \pazocal{P}^s_{\mu}\chi_{2Q\setminus{Q}}(\ox)+\pazocal{P}^s_{\mu}\chi_{2\widehat{Q}\setminus{2Q}}(\ox)\\
			&\hspace{1cm}-\Big( \pazocal{P}^s_{\mu}\chi_{\widehat{Q}} \Big)_{\widehat{Q},\mu}-\Big( \pazocal{P}^s_{\mu}\chi_{2\widehat{Q}\setminus{\widehat{Q}}} \Big)_{\widehat{Q},\mu} \nonumber \\
			&\hspace{1cm} +\bigg[ \pazocal{P}^s_{\mu}\chi_{\mathbb{R}^{n+1}\setminus{2\widehat{Q}}}(\ox)- \Big( \pazocal{P}^s_{\mu}\chi_{\mathbb{R}^{n+1}\setminus{2\widehat{Q}}} \Big)_{\widehat{Q},\mu} \bigg]. \nonumber
		\end{align}
		Let us begin by estimating the second term of the right-hand side. Since $2\widehat{Q}\subset 3^{j_0+1}Q$ and $\mu$ satisfies an upper $s$-parabolic growth condition, we have 
		\begin{align*}
			\Big\rvert \pazocal{P}^s_{\mu}\chi_{2\widehat{Q}\setminus{2Q}}(\ox) \Big\rvert &\lesssim \int_{3^{j_0+1}Q\setminus{Q}}\frac{\dd \mu(\oy)}{|\ox-\oy|_{p_s}^{n+1}}= \sum_{j=1}^{j_0+1}\int_{3^{j}Q\setminus{3^{j-1}Q}}\frac{\dd \mu(\oy)}{|\ox-\oy|_{p_s}^{n+1}}\\
			&\lesssim \sum_{j=1}^{j_0+1} \frac{\mu(3^{j}Q)}{\big( 3^{j}\ell(Q) \big)^{n+1}} \leq 2\frac{\mu\big( 3^{j_0+1}Q \big)}{\big( 3^{j_0}\ell(Q) \big)^{n+1}}+\sum_{j=1}^{j_0-1} \frac{\mu(3^{j}Q)}{\big( 3^{j}\ell(Q) \big)^{n+1}}\\
			&\lesssim 1+\sum_{j=1}^{j_0-1} \frac{\mu(3^{j}Q)}{\big( 3^{j}\ell(Q) \big)^{n+1}}.
		\end{align*}
		For the remaining sum, relation \eqref{eq4.1.9} implies
		\begin{align*}
			\sum_{j=1}^{j_0-1} \frac{\mu(3^{j}Q)}{\big( 3^{j}\ell(Q) \big)^{n+1}} &\leq \frac{\mu\big(3^{j_0-1}Q\big)}{\big(3^{j_0-1}\ell(Q)\big)^{n+1}}\sum_{j=1}^{j_0-1} \frac{1}{3^{(n+2)(j_0-1-j)}3^{(-j_0+1+j)(n+1)}}\\
			&\lesssim \sum_{j=1}^{j_0-1} \frac{1}{3^{j_0-1-j}}=\sum_{j=0}^{j_0-2}\frac{1}{3^j}\lesssim 1,
		\end{align*}
		so indeed $|\pazocal{P}^s_{\mu}\chi_{2\widehat{Q}\setminus{2Q}}(\ox)|\lesssim 1$ for $\ox\in Q$. The modulus of the third term of \eqref{eq4.1.11} is bounded by a constant depending on $n$ and $s$ by hypothesis, since $\langle \chi_R, \Ps_\mu \chi_R \rangle \lesssim 1$ for any $R\subset \mathbb{R}^{n+1}$\, $s$-parabolic cube with $A$-small boundary. Observe that the fourth term satisfies
		\begin{equation*}
			\Big( \pazocal{P}^s_{\mu}\chi_{2\widehat{Q}\setminus{\widehat{Q}}} \Big)_{\widehat{Q},\mu}\leq \frac{1}{\mu(\widehat{Q})} \int_{\widehat{Q}} \bigg(  \int_{2\widehat{Q}\setminus{\widehat{Q}}}\frac{\dd \mu(\oy)}{|\ox-\oy|_{p_s}^{n+1}} \bigg)\dd \mu(\ox).
		\end{equation*}
		
		Such expression can be dealt with as in \cite[Lemma 9.44]{T3}. Indeed, the above domains of integration imply $|\ox-\oy|_{p_s}\geq \text{dist}_{p_s}(\ox,\partial \widehat{Q})$. Then, defining
		\begin{equation*}
			Q_j:=Q\big(\ox,2^{j}\text{dist}_{p_s}(\ox,\partial\widehat{Q})\big), \hspace{0.75cm} 0\leq j \leq \Bigg\lceil \log_2\Bigg(\frac{4\ell(\widehat{Q})}{\text{dist}_{p_s}(\ox,\partial\widehat{Q})}\Bigg)\Bigg\rceil=:N,
		\end{equation*}
		integration over annuli and the upper $s$-parabolic growth of degree $n+1$ of $\mu$ yield
		\begin{align*}     \int_{2\widehat{Q}\setminus{\widehat{Q}}}\frac{\dd \mu(\oy)}{|\ox-\oy|_{p_s}^{n+1}} &\leq \int_{\text{dist}_{p_s}(\ox,\partial \widehat{Q})\,\leq\, |\ox-\oy|_{p_s}\, \leq  \,4\ell(\widehat{Q})}\frac{\dd \mu(\oy)}{|\ox-\oy|_{p_s}^{n+1}}\leq \sum_{j=0}^N\int_{Q_{j+1}\setminus{Q_j}} \frac{\dd \mu(\oy)}{|\ox-\oy|_{p_s}^{n+1}}\\
			&\leq \sum_{j=0}^N \frac{\mu(Q_{j+1})}{\ell(Q_j)^{n+1}}\lesssim  N = \Bigg\lceil \log_2\Bigg(\frac{4\ell(\widehat{Q})}{\text{dist}_{p_s}(\ox,\partial\widehat{Q})}\Bigg)\Bigg\rceil.
		\end{align*}
		For $j\geq 0$ let
		\begin{equation*}
			V_j:=\Big\{ \ox\in \widehat{Q}\,:\; 2^{-j-1}\ell(\widehat{Q}) < \text{dist}_{p_s}(\ox,\partial \widehat{Q}) \leq 2^{-j}\ell(\widehat{Q}) \Big\},
		\end{equation*}
		and observe that the $A$-small boundary property of $\widehat{Q}$ implies $\mu(V_j)\leq A2^{-j}\mu(2\widehat{Q})$. Then,
		\begin{align*}
			\frac{1}{\mu(\widehat{Q})} \int_{\widehat{Q}} \bigg(  \int_{2\widehat{Q}\setminus{\widehat{Q}}}\frac{\dd \mu(\oy)}{|\ox-\oy|_{p_s}^{n+1}} \bigg)\dd \mu(\ox) &\lesssim \frac{1}{\mu(\widehat{Q})} \sum_{j\geq 0}\int_{V_j}\Big\lceil \log_2\big(4\cdot 2^{j+1}\big)\Big\rceil \dd \mu(\ox)\\
			&\leq A\frac{\mu(2\widehat{Q})}{\mu(\widehat{Q})}\sum_{j\geq 0} \frac{\big\lceil \log_2\big(4\cdot 2^{j+1}\big)\big\rceil }{2^j}\lesssim 1,
		\end{align*}
		where in the last step we have used that $\widehat{Q}$ is $(2,3^{n+2})$-doubling. Finally, applying \cite[Lemma 9.12]{T3} (that admits a straightforward generalization to the $s$-parabolic setting) with $f:=\chi_{\mathbb{R}^{n+1}\setminus{2\widehat{Q}}}$, we also deduce
		\begin{equation*}
			\bigg\rvert \pazocal{P}^s_{\mu}\chi_{\mathbb{R}^{n+1}\setminus{2\widehat{Q}}}(\ox)- \Big( \pazocal{P}^s_{\mu}\chi_{\mathbb{R}^{n+1}\setminus{2\widehat{Q}}} \Big)_{\widehat{Q},\mu} \bigg\rvert \lesssim 1
		\end{equation*}
		Therefore, the left-hand side of \eqref{eq4.1.10} is bounded above by
		\begin{equation*}
			\bigg( \int_Q \Big\rvert \pazocal{P}^s_{\mu}\chi_{2Q\setminus{Q}} \Big\rvert^2 \dd \mu \bigg)^{1/2}+C\mu(Q)^{1/2},
		\end{equation*}
		for some $C(n,s)$. Notice that the remaining integral is such that
		\begin{equation*}
			\int_Q \Big\rvert \pazocal{P}^s_{\mu}\chi_{2Q\setminus{Q}} \Big\rvert^2 \dd \mu  \leq \int_{Q} \bigg(  \int_{2Q\setminus{Q}}\frac{\dd \mu(\oy)}{|\ox-\oy|_{p_s}^{n+1}} \bigg)^2\dd \mu(\ox).
		\end{equation*}
		As $Q$ has $A$-small boundary, we can proceed as we have done for the fourth term of the right-hand side of \eqref{eq4.1.11} (again, see \cite[Lemma 9.44]{T3} for more details), and deduce
		\begin{equation*}
			\int_Q \Big\rvert \pazocal{P}^s_{\mu}\chi_{2Q\setminus{Q}} \Big\rvert^2 \dd \mu  \lesssim \mu(2Q). 
		\end{equation*}
		All in all, we get that the left-hand side of \eqref{eq4.1.10} is bounded by $\mu(2Q)^{1/2}$, up to a multiplicative constant depending only on $n$ and $s$, that implies the desired result.
	\end{proof}

    Let $(\lambda_j)_j$ be such that $0<\lambda_j\leq \tau_0<1/d$, for every $j$, and denote by $E_{p_s}$ its associated $s$-parabolic Cantor set as in \eqref{eq4.1.1}. Assume that there exists an absolute constant $\kappa>0$ so that $\theta_{j,p_s} \leq \kappa$ for every $j\geq 0$. Fix a generation $k$ and let $\mu_k$ be the usual uniform probability measure of $E_{k,p_s}$. Then, by Theorem \ref{thm4.1.8}, Lemma \ref{lem1.2}, the fact that $\mu_k$ satisfies an upper $s$-parabolic growth condition and that $\theta_{0,p_s}=1$,
		\begin{equation*}
			\|\pazocal{P}^s \mu_k\|_{\text{\normalfont{BMO}}_{3,p_s}(\mu_k)} + \|\pazocal{P}^{s,*} \mu_k\|_{\text{\normalfont{BMO}}_{3,p_s}(\mu_k)} \lesssim_{\tau_0,\kappa} 1+ \Bigg( \sum_{j=0}^{k} \theta_{j,p_s}^2 \Bigg)^{1/2} \leq \Bigg( \sum_{j=0}^{k} \theta_{j,p_s}^2 \Bigg)^{1/2}.
		\end{equation*}
	
	Lemma \ref{lem4.1.11} allows us to deduce the desired estimate for $\Gamma_{\Theta}(E_{k,p_s})$:
	\begin{thm}
		\label{thm4.1.12}
		Let $(\lambda_j)_j$ be such that $0<\lambda_j\leq \tau_0<1/d$, for every $j$, and denote by $E_{p_s}$ its associated $s$-parabolic Cantor set as in \eqref{eq4.1.1}. Assume that there exists an absolute constant $\kappa>0$ so that $\theta_{j,p_s} \leq \kappa$ for each $j\geq 0$. Then, for every generation $k$,
		\begin{equation*}
			\widetilde{\Gamma}_{\Theta^s,+} (E_{k,p_s}) \gtrsim_{\tau_0,\kappa} \Bigg( \sum_{j=0}^{k} \theta_{j,p_s}^2 \Bigg)^{-1/2}.
		\end{equation*}
	\end{thm}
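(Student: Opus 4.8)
\emph{Plan of proof.} The idea is to upgrade the $L^2$ estimate of Theorem~\ref{thm4.1.8} to genuine $L^2(\mu_k)$-boundedness of $\pazocal{P}^s_{\mu_k}$ and of its conjugate via a non-homogeneous $T1$ theorem, and then to produce a rescaled copy of $\mu_k$ that is admissible for $\widetilde{\Gamma}_{\Theta^s,+}(E_{k,p_s})$. First I would recall, as in Section~\ref{sec2} and observation~\textit{4}, that the hypothesis $\theta_{j,p_s}\le\kappa$ makes $\mu_k$ have upper $s$-parabolic growth of degree $n+1$ with constant depending only on $n,s$ and $\kappa$, which after renormalisation we take to be $1$; in particular $\mu_k$ is upper doubling with dominating function $r^{n+1}$. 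Then, feeding $\mu=\mu_k$ into Lemma~\ref{lem4.1.11}, its hypothesis holds because $\langle\chi_R,\pazocal{P}^s_{\mu_k}\chi_R\rangle=0$ by Lemma~\ref{lem1.2}, while Theorem~\ref{thm4.1.8} applied with $Q=Q^0$ bounds the right-hand side of Lemma~\ref{lem4.1.11} by $1+(\sum_{j=0}^k\theta_{j,p_s}^2)^{1/2}\lesssim(\sum_{j=0}^k\theta_{j,p_s}^2)^{1/2}$, using $\theta_{0,p_s}=1$. This is precisely the quantitative $\text{BMO}_{3,p_s}(\mu_k)$ bound for $\pazocal{P}^s\mu_k$ and $\pazocal{P}^{s,*}\mu_k$ displayed just above the statement.

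Next I would combine these BMO bounds, the vanishing weak-boundedness pairings from Lemma~\ref{lem1.2}, the $(n+1)$-dimensional CZ character of $\nabla_xP_s$ with respect to $\text{dist}_{p_s}$ \cite[Theorem 2.2]{HMPr}, and the geometric/upper doubling of $(\mathbb{R}^{n+1},\text{dist}_{p_s},\mu_k)$, to invoke the non-homogeneous $T1$ theorem \cite[Theorem 2.3]{HyMa} in the form discussed in observations~\textit{1}--\textit{5} (in particular using \cite[Remark 2.4]{HyMa} to handle the proper $s$-parabolic distance). The conclusion is
\begin{equation*}
\|\pazocal{P}^s_{\mu_k}\|_{L^2(\mu_k)\to L^2(\mu_k)}+\|\pazocal{P}^{s,*}_{\mu_k}\|_{L^2(\mu_k)\to L^2(\mu_k)}\lesssim_{\tau_0,\kappa}B,\qquad B:=\Bigg(\sum_{j=0}^k\theta_{j,p_s}^2\Bigg)^{1/2},
\end{equation*}
and since $B\ge\theta_{0,p_s}=1$ this operator norm also dominates the (renormalised) growth constant $1$ of $\mu_k$.

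Finally I would set $\nu:=c\,B^{-1}\mu_k$ for a small constant $c=c(n,s,\tau_0,\kappa)$. This is a positive measure supported on $E_{k,p_s}$ with $n+1$-growth constant $\le cB^{-1}\le1$, and because replacing $\mu_k$ by $\lambda\mu_k$ multiplies the $L^2$ convolution-operator norm by $\lambda$, we obtain $\|\pazocal{P}^s_\nu\|_{L^2(\nu)\to L^2(\nu)}+\|\pazocal{P}^{s,*}_\nu\|_{L^2(\nu)\to L^2(\nu)}\le1$ once $c$ is chosen small enough. By the $L^2$-characterisation of $\widetilde{\Gamma}_{\Theta^s,+}$ from \cite{H}, the measure $\nu$ is then admissible, so $\widetilde{\Gamma}_{\Theta^s,+}(E_{k,p_s})\gtrsim\nu(E_{k,p_s})=cB^{-1}\approx_{\tau_0,\kappa}\big(\sum_{j=0}^k\theta_{j,p_s}^2\big)^{-1/2}$, which is the claim.

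The step I expect to be the main obstacle is the application of \cite[Theorem 2.3]{HyMa}: one must match the non-homogeneous $T1$ machinery to the present $s$-parabolic, geometrically doubling and upper doubling setting, check its hypotheses in precisely the form required (BMO membership, weak boundedness, CZ kernel bounds, the role of the dominating function $r^{n+1}$), and verify that every implicit constant depends only on $n,s,\tau_0$ and $\kappa$. Granted this, the rescaling of $\mu_k$ and its identification as a competitor for $\widetilde{\Gamma}_{\Theta^s,+}$ are routine.
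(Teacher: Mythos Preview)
Your proposal is correct and follows essentially the same route as the paper: establish the $\text{BMO}_{3,p_s}(\mu_k)$ bounds via Lemma~\ref{lem4.1.11} (using Lemma~\ref{lem1.2} for the weak boundedness and Theorem~\ref{thm4.1.8} for the quantitative input), apply the non-homogeneous $T1$ theorem \cite[Theorem~2.3]{HyMa} to obtain the $L^2(\mu_k)$ operator-norm bound, and then rescale $\mu_k$ and invoke the $L^2$-characterisation of $\widetilde{\Gamma}_{\Theta^s,+}$ from \cite[Theorem~4.3]{H}. The paper's proof is simply a more compressed version of exactly these steps.
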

	\begin{proof}
		By a direct application of Lemma \ref{lem4.1.11} and \cite[Theorem 2.3]{HyMa} we deduce
		\begin{equation}
        \label{eq3.7}
			\big\|\pazocal{P}^s_{\mu_k} \big\|_{L^{2}(\mu_k)\to L^{2}(\mu_k)}\leq C\Bigg( \sum_{j=0}^{k} \theta_{j,p_s}^2 \Bigg)^{1/2},
		\end{equation}
		for some $C=C(n,s,\tau_0, \kappa)$. Then, by \cite[Theorem 4.3]{H}, $C^{-1}\big(\sum_{j=0}^{k} \theta_{j,p_s}^2\big)^{-1/2}\mu_k$ becomes an admissible measure for $\widetilde{\Gamma}_{\Theta^s,+}(E_{k,p_s})$, and we are done.
	\end{proof}
	
	The next lemma will allow us to extend the result to the final $s$-parabolic Cantor set $E_{p_s}$.
	
	\begin{lem}
		\label{lem4.1.13}
		If $(E_k)_k$ is a nested sequence of compact sets of $\mathbb{R}^{n+1}$ that decreases to $\pazocal{E}:=\cap_{k=1}^\infty E_k$, then
		\begin{equation*}
			\lim_{k\to\infty}\widetilde{\Gamma}_{\Theta^s,+}(E_k)=\widetilde{\Gamma}_{\Theta^s,+}(\pazocal{E}).
		\end{equation*}
	\end{lem}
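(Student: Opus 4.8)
The plan is to run the standard weak-$\ast$ compactness (outer-continuity) argument for capacities defined as a supremum over admissible measures; the only genuinely kernel-dependent input will be that $\nabla_x P_s$ and $\partial_t^{\frac{1}{2s}}P_s$ are locally integrable on $\mathbb{R}^{n+1}$, which is exactly where $s>1/2$ is used. First I would record monotonicity: since the normalization conditions defining admissibility do not involve the set and $\mathrm{supp}\,\mu\subset E_{k+1}\subset E_k$, every measure admissible for $\widetilde{\Gamma}_{\Theta^s,+}(E_{k+1})$ is admissible for $\widetilde{\Gamma}_{\Theta^s,+}(E_k)$; hence $(\widetilde{\Gamma}_{\Theta^s,+}(E_k))_k$ is non-increasing, its limit $L$ exists, and $L\ge\widetilde{\Gamma}_{\Theta^s,+}(\pazocal{E})$ because $\pazocal{E}\subset E_k$ for all $k$. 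Thus only $L\le\widetilde{\Gamma}_{\Theta^s,+}(\pazocal{E})$ remains, and we may assume $L>0$.

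Next, for each $k$ I would pick a positive measure $\mu_k$ admissible for $\widetilde{\Gamma}_{\Theta^s,+}(E_k)$ with $\mu_k(E_k)\ge\widetilde{\Gamma}_{\Theta^s,+}(E_k)-1/k$. Since the capacity of a compact set is finite (admissible measures have upper $s$-parabolic growth of degree $n+1$ with a constant depending only on $n,s$, so $\widetilde{\Gamma}_{\Theta^s,+}(E_1)<\infty$; cf. \cite{H}), the masses $\mu_k(\mathbb{R}^{n+1})=\mu_k(E_k)$ are uniformly bounded and all the $\mu_k$ are supported in the fixed compact set $E_1$. By weak-$\ast$ compactness I pass to a subsequence $\mu_{k_j}\rightharpoonup\mu$, a positive finite measure. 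If $\ox_0\notin\pazocal{E}$, a small closed $s$-parabolic ball $B$ around $\ox_0$ is disjoint from $\pazocal{E}=\bigcap_k E_k$ and, being compact, disjoint from $E_k$ for all large $k$; since then $\mu_{k_j}(B)=0$ eventually, $\mu$ vanishes near $\ox_0$, whence $\mathrm{supp}\,\mu\subset\pazocal{E}$. Testing against a continuous compactly supported function equal to $1$ on $E_1$ shows no mass escapes, so $\mu(\pazocal{E})=\lim_j\mu_{k_j}(E_{k_j})\ge L$.

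The remaining, and main, step is to check that $\mu$ is admissible for $\widetilde{\Gamma}_{\Theta^s,+}(\pazocal{E})$, i.e. $\|\nabla_xP_s\ast\mu\|_{L^\infty(\mathbb{R}^{n+1})}\le1$, $\|\partial_t^{\frac{1}{2s}}P_s\ast\mu\|_{\ast,p_s}\le1$, and the same two bounds for the conjugate kernel $P_s^\ast$. Here I would use that each of these four kernels $K$ is an $(n+1)$-dimensional Calderón--Zygmund kernel for $|\cdot|_{p_s}$ (Section 1, \cite{HMPr}, \cite{H}), so $|K(\ox)|\lesssim|\ox|_{p_s}^{-(n+1)}$, and that $|\ox|_{p_s}^{-(n+1)}\in L^1_{\mathrm{loc}}(\mathbb{R}^{n+1})$ precisely when $s>1/2$ (an $s$-parabolic ball of radius $r$ has Lebesgue measure $\approx r^{n+2s}$). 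Consequently $K\ast\varphi$ is continuous and decaying for every $\varphi\in C_c^\infty$, and from $\mu_{k_j}\rightharpoonup\mu$ (all measures supported in $E_1$) one gets $\langle K\ast\mu_{k_j},\varphi\rangle\to\langle K\ast\mu,\varphi\rangle$, i.e. the potentials converge in the sense of distributions and $K\ast\mu\in L^1_{\mathrm{loc}}$. For $K=\nabla_xP_s$ and $K=\nabla_xP_s^\ast$ the inequality $|\langle K\ast\mu_{k_j},\varphi\rangle|\le\|\varphi\|_{L^1}$ passes to the limit, giving $\|K\ast\mu\|_{L^\infty}\le1$. For $K=\partial_t^{\frac{1}{2s}}P_s$ and its conjugate I would use lower semicontinuity of the $\mathrm{BMO}_{p_s}$-seminorm under distributional convergence: for a fixed $s$-parabolic cube $Q$, $\frac{1}{|Q|}\int_Q|g-g_Q|\dd\ox$ equals the supremum of $\frac{1}{|Q|}\langle g,h\rangle$ over $h\in L^\infty$ with $\mathrm{supp}\,h\subset\overline{Q}$, $\int h\dd\ox=0$, $\|h\|_{L^\infty}\le1$, and mollifying $h$ reduces this to $h\in C_c^\infty$, against which the pairing passes to the limit; hence $\|\partial_t^{\frac{1}{2s}}P_s\ast\mu\|_{\ast,p_s}\le\liminf_j\|\partial_t^{\frac{1}{2s}}P_s\ast\mu_{k_j}\|_{\ast,p_s}\le1$, and similarly for the conjugate. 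With all four normalizations verified, $\mu$ is admissible, so $\widetilde{\Gamma}_{\Theta^s,+}(\pazocal{E})\ge\mu(\pazocal{E})\ge L$, which together with monotonicity closes the argument.

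I expect the main obstacle to be this last step, namely transporting the two $L^\infty$- and the two $\mathrm{BMO}_{p_s}$-normalizations to the weak-$\ast$ limit. This is where the restriction $s>1/2$ enters essentially — through local integrability of $\nabla_xP_s$ and $\partial_t^{\frac{1}{2s}}P_s$, which is what yields distributional convergence of the associated potentials — and it is where one must be slightly careful with the lower semicontinuity of the BMO seminorm, in particular with keeping the sharp constant $1$.
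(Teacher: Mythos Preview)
Your argument tracks the paper's closely: monotonicity, near-optimizers $\mu_k$, weak-$\ast$ compactness, $\mathrm{supp}\,\mu\subset\pazocal{E}$, and transport of the normalizations. The $L^\infty$ step is handled equivalently --- you use $\nabla_xP_s\in L^1_{\mathrm{loc}}$ (valid exactly for $s>1/2$) to get distributional convergence of the potentials, while the paper invokes Banach--Alaoglu on $L^\infty=(L^1)^\ast$ to extract a weak-$\ast$ limit $S$ and then identifies $S=\nabla_xP_s\ast\mu$ via a mollifier; both uses rest on the same local integrability. The genuine difference is the $\mathrm{BMO}_{p_s}$ step. You argue lower semicontinuity of $\|\cdot\|_{\ast,p_s}$ under distributional convergence by dualizing against mean-zero $h\in L^\infty(Q)$; the paper bypasses this entirely and, once $\|\nabla_xP_s\ast\mu\|_\infty\le 1$ and $\|\nabla_xP_s^\ast\ast\mu\|_\infty\le 1$ are in hand, simply invokes \cite[Lemma~4.2]{HMPr} to conclude $\|\partial_t^{1/(2s)}P_s\ast\mu\|_{\ast,p_s}\lesssim 1$ and the conjugate bound. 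That shortcut is cleaner in this specific setting; your route is more portable and would apply to any capacity built from $L^\infty$ and $\mathrm{BMO}$ normalizations. One small correction to your duality claim: $\tfrac{1}{|Q|}\int_Q|g-g_Q|$ is \emph{not} in general equal to $\sup_h\tfrac{1}{|Q|}\langle g,h\rangle$ over mean-zero $h$ with $\|h\|_\infty\le 1$, since the optimal $h=\mathrm{sgn}(g-g_Q)$ need not have mean zero; centering it costs up to a factor $2$, so you obtain lower semicontinuity only up to that constant --- precisely the issue you anticipated. The paper's route has the same defect (its $\lesssim 1$ from \cite[Lemma~4.2]{HMPr} is not $\le 1$), so in either approach the limit measure is admissible only after a harmless dimensional rescaling; this is all that is needed downstream in Theorem~\ref{thm3.6}.
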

	\begin{proof}
		It is clear that $\widetilde{\Gamma}_{\Theta^s,+}(\pazocal{E})\leq \lim_{k\to\infty}\widetilde{\Gamma}_{\Theta^s,+}(E_k)$, so we are left to prove the converse inequality. For each $k$ consider an admissible measure $\mu_k$ for $\widetilde{\Gamma}_{\Theta^s,+}(E_k)$ with
		\begin{equation*}
			\widetilde{\Gamma}_{\Theta^s,+}(E_k)-\frac{1}{k} \leq \mu_k(E_k) \leq \widetilde{\Gamma}_{\Theta^s,+}(E_k),
		\end{equation*}
		We shall verify that there exists an admissible measure $\mu$ for $\widetilde{\Gamma}_{\Theta^s,+}(\pazocal{E})$ so that 
		\begin{equation}
			\label{eq4.1.12}
			\limsup_{k\to\infty} \mu_k(E_k) \leq \mu(\pazocal{E}).
		\end{equation}
		If this is the case,
		\begin{equation*}
			\lim_{k\to\infty} \widetilde{\Gamma}_{\Theta^s,+}(E_k)\leq \limsup_{k\to\infty}  \mu_k(E_k) \leq \mu(\pazocal{E}) \leq \widetilde{\Gamma}_{\Theta^s,+}(\pazocal{E}),
		\end{equation*}
		and we are done. To construct such $\mu$, notice that \cite[Theorem 3.1]{H} implies that each $\mu_k$ has upper $s$-parabolic growth of degree $n+1$ with an absolute constant $C$. Then $\mu_k(\mathbb{R}^{n+1})\leq C\,\text{diam}_{p_s}(E_1)^{n+1},\, \forall k\geq 0$, so by \cite[Theorem 1.23]{Mat} there exists a positive Radon measure $\mu$ on $\mathbb{R}^{n+1}$ such that $\mu_k \rightharpoonup\mu$ weakly. Arguing by contradiction it is not difficult to verify that $\text{supp}(\mu)\subseteq \pazocal{E}$, and it is also clear that \eqref{eq4.1.12} is satisfied (in fact, taking $\varphi\in \pazocal{C}_0(\mathbb{R}^{n+1})$ with $\varphi \equiv 1$ on a neighborhood of $E_1$, \eqref{eq4.1.12} holds with a proper limit and an equality). So we are left to estimate the quantities $\|\nabla_x P_s \ast \mu\|_{\infty}$ and $\|\partial_t^{\frac{1}{2s}}P_s \ast\mu\|_{\ast,p_s}$, as well as the same norms changing $P$ by its conjugate $P^\ast$.
		
		By assumption $\nabla_xP_s\ast \mu_k$ belongs to the unit ball of $L^{\infty}(\mathbb{R}^{n+1})\cong L^1(\mathbb{R}^{n+1})^\ast$, and it is clear that $L^1(\mathbb{R}^{n+1})$ is separable. Then, by the sequential version of Banach-Alaoglu's theorem there exists some $S\in L^{\infty}(\mathbb{R}^{n+1})$ with $\|S\|_\infty\leq 1$ and $\nabla_xP_s\ast \mu_k\to S$ as $k\to \infty$ in a $\text{weak}^\star$-$L^\infty$ sense. Now take $\psi\in \pazocal{C}_c^\infty(B(0,1))$ positive and radial with $\int \psi = 1$ and set $\psi_\varepsilon:=\varepsilon^{-(n+2s)}\psi(\cdot/\varepsilon)$. Since $\nabla_x P_s\ast \mu_k$ converges to $S$ in a $\text{weak}^\star$-$L^\infty$ sense and by construction $\|\psi_\varepsilon\|_{L^1(\mathbb{R}^{n+1})}=1$,
		\begin{equation*}
			\lim_{k\to\infty} \big( \psi_\varepsilon \ast \nabla_xP_s \ast \mu_k \big)(\ox)= \psi_\varepsilon \ast S (\ox), \hspace{0.5cm} \ox\in \mathbb{R}^{n+1}.
		\end{equation*}
		In addition, since $\psi_\varepsilon\ast \nabla_x P_s \in \pazocal{C}^\infty(\mathbb{R}^{n+1})$ and $\mu_k$ converges to $\mu$ in the weak topology of (compactly supported) real Radon measures, we have
		\begin{equation*}
			\lim_{k\to\infty} \big( \psi_\varepsilon \ast  \nabla_xP_s \ast \mu_k \big)(\ox)= \big( \psi_\varepsilon \ast \nabla_xP_s \ast \mu \big) (\ox), \hspace{0.5cm} \ox\in \mathbb{R}^{n+1}.
		\end{equation*}
		Hence $\psi_\varepsilon\ast S = \psi_\varepsilon \ast \nabla_xP_s \ast \mu$ for every $\varepsilon>0$, so $S=\nabla_xP_s\ast \mu$ and in particular $\|\nabla_x P_s \ast \mu\|_{\infty}\leq 1$. Following exactly the same argument above, one can prove that there exists $S^\ast\in L^\infty(\mathbb{R}^{n+1})$ with $\|S^\ast\|_\infty \leq 1$ so that $S^\ast = \nabla_xP_s^\ast \ast \mu$. Finally, applying \cite[Lemma 4.2]{HMPr} with $\beta:=\frac{1}{2s}$ we also deduce $\|\partial_{t}^{\frac{1}{2s}}P_s\ast \mu\|_{\ast, p_s}\lesssim 1$ and $\|\partial_{t}^{\frac{1}{2s}}P_s^\ast \ast \mu\|_{\ast, p_s}\lesssim 1$, and the proof is complete.
	\end{proof}
	
	\begin{thm}
    \label{thm3.6}
		Let $(\lambda_j)_j$ be such that $0<\lambda_j\leq \tau_0<1/d$, for every $j$, and denote by $E_{p_s}$ its associated $s$-parabolic Cantor set as in \eqref{eq4.1.1}. Then, there exists a constant $C=C(n,s,\tau_0)$ such that
		\begin{equation*}
			\widetilde{\Gamma}_{\Theta^s,+}(E_{p_s})\geq C \Bigg( \sum_{j=0}^{\infty} \theta_{j,p_s}^2 \Bigg)^{-1/2}.
		\end{equation*}
	\end{thm}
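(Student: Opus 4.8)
The plan is to deduce the estimate for $E_{p_s}$ from the corresponding estimate for each finite generation $E_{k,p_s}$ (Theorem~\ref{thm4.1.12}) together with the continuity of $\widetilde{\Gamma}_{\Theta^s,+}$ along decreasing sequences of compacta (Lemma~\ref{lem4.1.13}); the only real work is to make the constant independent of the parameter $\kappa$ from Theorem~\ref{thm4.1.12}. If $\sum_{j\ge0}\theta_{j,p_s}^2=\infty$ the right-hand side vanishes and, capacities being non-negative, there is nothing to prove, so assume the series converges, equivalently $\theta_{j,p_s}\to0$.

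Fix a generation $k$, let $\mu_k$ be the uniform probability measure on $E_{k,p_s}$, and set $\kappa_k:=\max_{0\le j\le k}\theta_{j,p_s}$. The elementary but decisive observation is $\kappa_k\le\big(\sum_{j=0}^k\theta_{j,p_s}^2\big)^{1/2}$ (one summand on the right equals $\kappa_k^2$). I would revisit the proof of Theorem~\ref{thm4.1.12} and note that $\kappa$ enters the constant in~\eqref{eq3.7} only through the upper $s$-parabolic growth constant of $\mu_k$ needed to invoke Lemma~\ref{lem4.1.11}, which by Section~\ref{sec2} is $\lesssim\kappa_k$. So I apply Lemma~\ref{lem4.1.11} not to $\mu_k$ but to the renormalization $\widetilde{\mu}_k:=c_0\kappa_k^{-1}\mu_k$, with $c_0=c_0(n,s)$ chosen so that $\widetilde{\mu}_k$ has growth constant $\le1$; its remaining hypotheses (positivity, compact support, and $\langle\chi_R,\pazocal{P}^s_{\widetilde{\mu}_k}\chi_R\rangle=0$, the last by Lemma~\ref{lem1.2}) all hold. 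Using the scale invariance of the $\mathrm{BMO}_{3,p_s}$ norm and of the $L^2$ operator norm under replacing a measure by a constant multiple of itself, together with Theorem~\ref{thm4.1.8} and the bound $\kappa_k\le(\sum_{j=0}^k\theta_{j,p_s}^2)^{1/2}$ (which absorbs the additive $1$ coming from Lemma~\ref{lem4.1.11}), one gets $\|\pazocal{P}^s\widetilde{\mu}_k\|_{\mathrm{BMO}_{3,p_s}(\widetilde{\mu}_k)}+\|\pazocal{P}^{s,*}\widetilde{\mu}_k\|_{\mathrm{BMO}_{3,p_s}(\widetilde{\mu}_k)}\lesssim_{\tau_0}\kappa_k^{-1}\big(\sum_{j=0}^k\theta_{j,p_s}^2\big)^{1/2}$. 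Then \cite[Theorem 2.3]{HyMa} (applicable since $(\mathbb{R}^{n+1},\mathrm{dist}_{p_s})$ is geometrically doubling, $\widetilde{\mu}_k$ is upper doubling with dominating function $r^{n+1}$, and weak boundedness is automatic by Lemma~\ref{lem1.2}) gives $\|\pazocal{P}^s_{\widetilde{\mu}_k}\|_{L^2(\widetilde{\mu}_k)\to L^2(\widetilde{\mu}_k)}\lesssim_{\tau_0}\kappa_k^{-1}\big(\sum_{j=0}^k\theta_{j,p_s}^2\big)^{1/2}$, and undoing the renormalization yields $\|\pazocal{P}^s_{\mu_k}\|_{L^2(\mu_k)\to L^2(\mu_k)}\lesssim_{\tau_0}\big(\sum_{j=0}^k\theta_{j,p_s}^2\big)^{1/2}$, i.e.\ \eqref{eq3.7} with a constant depending only on $n,s,\tau_0$.

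From here the argument of Theorem~\ref{thm4.1.12} runs unchanged: by \cite[Theorem 4.3]{H}, the measure $c^{-1}\big(\sum_{j=0}^k\theta_{j,p_s}^2\big)^{-1/2}\mu_k$ is admissible for $\widetilde{\Gamma}_{\Theta^s,+}(E_{k,p_s})$ for some $c=c(n,s,\tau_0)$, so $\widetilde{\Gamma}_{\Theta^s,+}(E_{k,p_s})\ge c^{-1}\big(\sum_{j=0}^k\theta_{j,p_s}^2\big)^{-1/2}\ge c^{-1}\big(\sum_{j=0}^\infty\theta_{j,p_s}^2\big)^{-1/2}$ for all $k$. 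Since $(E_{k,p_s})_k$ is a decreasing sequence of compacta with $\bigcap_k E_{k,p_s}=E_{p_s}$, Lemma~\ref{lem4.1.13} then gives $\widetilde{\Gamma}_{\Theta^s,+}(E_{p_s})=\lim_k\widetilde{\Gamma}_{\Theta^s,+}(E_{k,p_s})\ge c^{-1}\big(\sum_{j=0}^\infty\theta_{j,p_s}^2\big)^{-1/2}$, which is the claim with $C=c$. The only delicate point in the whole argument is the one treated in the previous paragraph: checking that none of the constants in Lemma~\ref{lem4.1.11}, \cite[Theorem 2.3]{HyMa}, and \cite[Theorem 4.3]{H} degenerate as $\kappa_k\to\infty$; this succeeds precisely because $\kappa$ can affect them only through the growth constant of $\mu_k$, which is automatically $\le(\sum_{j=0}^k\theta_{j,p_s}^2)^{1/2}$, so renormalizing by it costs nothing against the target scale.
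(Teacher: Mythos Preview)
Your argument is correct and follows the paper's strategy exactly—reduce to the finite-generation estimate (Theorem~\ref{thm4.1.12}) and pass to the limit via Lemma~\ref{lem4.1.13}—but you add a genuine refinement. The paper's proof is two lines: if the series converges then some $\kappa$ bounds all $\theta_{j,p_s}$, so Theorem~\ref{thm4.1.12} applies and Lemma~\ref{lem4.1.13} finishes. Read literally, this produces a constant depending on $\kappa$, hence on the particular sequence $(\lambda_j)$, which is weaker than the stated $C=C(n,s,\tau_0)$. Your renormalization $\widetilde{\mu}_k=c_0\kappa_k^{-1}\mu_k$ together with the observation $\kappa_k\le\big(\sum_{j\le k}\theta_{j,p_s}^2\big)^{1/2}$ is precisely what is needed to remove this dependence: it forces the growth constant to $1$ before invoking Lemma~\ref{lem4.1.11} and \cite[Theorem~2.3]{HyMa}, and the scaling of the BMO and $L^2\to L^2$ norms under $\mu\mapsto c\mu$ behaves exactly as you say, so that after undoing the normalization one recovers~\eqref{eq3.7} with an implicit constant depending only on $n,s,\tau_0$. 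The rest of your write-up (admissibility via \cite[Theorem~4.3]{H}, monotonicity of the partial sums, and the outer continuity from Lemma~\ref{lem4.1.13}) is routine and correct.
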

	
	\begin{proof}
		We assume, without loss of generality, that the sum involved in the estimate is convergent. If this is the case, it is clear that there exists some $\kappa^2>0$ such that $\theta_{j,p_s}^2\leq \kappa^2$ for every $j$. Therefore, we are under the hypothesis of Theorem \ref{thm4.1.12} and we shall apply Lemma \ref{lem4.1.13} to deduce the desired result.
	\end{proof}
	\section{The lower \mathinhead{L^2}{}-estimate for \mathinhead{\Ps\mu_k}{} }
    \label{sec5}
	The goal of this section is to prove the following lower estimate:
	\begin{thm}
		\label{thm4.1}
		The following bound holds for each $s\in(1/2,1]$,
		\begin{equation*}
			\|\Ps \mu_k\|^2\gtrsim \sum_{j=0}^{k} \theta_{j,p_s}^2.
		\end{equation*}
	\end{thm}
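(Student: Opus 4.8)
The plan is to adapt Tolsa's lower estimate for Riesz kernels from \cite[\textsection 5]{T2}; the structural inputs are the spatial antisymmetry $\nabla_xP_s(-x,t)=-\nabla_xP_s(x,t)$ for each fixed $t$, the explicit form $\nabla_xP_s(x,t)\simeq x\,t^{-\frac{n+2}{2s}}\phi_{n+2,s}\bigl(|x|t^{-\frac{1}{2s}}\bigr)\chi_{t>0}$ with $\phi_{n+2,s}$ positive and radially decreasing, and the CZ and $2\zeta$-Hölder bounds for $\nabla_xP_s$ recalled in the introduction; the factor $\chi_{t>0}$ and the absence of temporal antisymmetry cause no essential trouble here. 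First I would reduce to a scale-by-scale estimate via the martingale decomposition of Section~\ref{sec2}: since $\int\Ps\mu_k\,\dd\mu_k=0$ by Lemma~\ref{lem1.2} with $R=Q^0$, Lemma~\ref{lem1.3} and the orthogonal splitting $\Ps\mu_k=S_k\Ps\mu_k+(\Ps\mu_k-S_k\Ps\mu_k)$ give
\[
\|\Ps\mu_k\|^2\;\ge\;\|S_k\Ps\mu_k\|^2\;=\;\sum_{Q\in\QQ^\ast}\bigl\|D_Q\Ps\mu_k\bigr\|^2\;=\;\sum_{j=0}^{k-1}\ \sum_{Q\in\QQ^j}\bigl\|D_Q\Ps\mu_k\bigr\|^2 .
\]
It therefore suffices to prove $\sum_{Q\in\QQ^j}\|D_Q\Ps\mu_k\|^2\gtrsim\theta_{j,p_s}^2$ for each $0\le j\le k-1$ (these add up to $\sum_{j=0}^{k-1}\theta_{j,p_s}^2$), together with a matching lower bound for the finest scale $j=k$.

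For the scale-$j$ bound, fix $Q\in\QQ^j$ and a child $P$ of $Q$, and write $c^x_R$ for the spatial centre of a cube $R$. Because $\int_P\Ps_{\mu_k}\chi_P\,\dd\mu_k=0$ (Lemma~\ref{lem1.2}), the $\mu_k$-mean of $\Ps\mu_k$ over $P$ equals $\bigl(\Ps_{\mu_k}\chi_{Q\setminus P}\bigr)_{P,\mu_k}+\bigl(\Ps_{\mu_k}\chi_{\mathbb{R}^{n+1}\setminus Q}\bigr)_{P,\mu_k}$. In the first, ``near-field'' term the children of $Q$ lying in a temporal slab strictly after that of $P$ drop out because $\chi_{t>0}$ vanishes there; using $\int_P\Ps_{\mu_k}\chi_P\,\dd\mu_k=0$ once more to reinstate $P$'s own spatial column, the term becomes a sum of integrals over the temporal strips of $Q$ at or before the slab of $P$, each strip being spatially symmetric about $c^x_Q$. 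Spatial antisymmetry of $\nabla_xP_s(\cdot,t)$ makes the integral over each strip vanish to first order at $c^x_Q$, and its first-order expansion there is a strictly positive multiple of $c^x_P-c^x_Q$; hence $\bigl(\Ps_{\mu_k}\chi_{Q\setminus P}\bigr)_{P,\mu_k}$ is a positive multiple of $\theta_{j,p_s}\,(c^x_P-c^x_Q)/\ell_j$, of size $\approx\theta_{j,p_s}$ for a child $P$ that is extremal in every spatial coordinate. Comparing two such children that are spatially opposite (same temporal slab) would, if the near-field term were the whole story, already give $\|D_Q\Ps\mu_k\|^2\gtrsim\theta_{j,p_s}^2\mu_k(Q)$ and hence the scale-$j$ bound.

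The hard part is the ``far-field'' term. By Lemma~\ref{lem4.1.1}, $\Ps_{\mu_k}\chi_{\mathbb{R}^{n+1}\setminus Q}$ oscillates on $Q$ by at most $p(Q)=\sum_{r=0}^{j}\theta_{r,p_s}\ell_j/\ell_r$, and since $\lambda_i\le\tau_0<1/d$ a short geometric-series estimate gives $p(Q)\approx\theta_{j,p_s}$; thus the far-field oscillation is a priori of the same order as the near-field signal, so a plain triangle inequality does not close the argument and one must preclude systematic cancellation between the two --- not in each cube, but on average over $\QQ^j$. Exactly as in \cite[\textsection 5]{T2}, this is handled by a global argument: one sums the martingale differences over the whole Cantor tree and exploits once more the spatial antisymmetry of $\nabla_xP_s$ and the reflection symmetry of the construction, so that the accumulated far-field contributions cannot annihilate the accumulated near-field signals, the net lower bound being $\gtrsim\sum_j\theta_{j,p_s}^2$ (including, after an analogous direct estimate $\|\Ps_{\mu_k}\chi_{Q^k}\|_{L^2(\mu_k|_{Q^k})}^2\gtrsim\theta_{k,p_s}^2\mu_k(Q^k)$ obtained by polar integration with $\oy$ confined to a spatial half of $Q^k$, the last scale $j=k$). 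The error terms generated along the way are absorbed using the bound $|\nabla_xP_s(\ox)-\nabla_xP_s(\ox')|\lesssim|\ox-\ox'|_{p_s}^{2\zeta}/|\ox|_{p_s}^{n+1+2\zeta}$ and the geometric decay $\ell_j/\ell_r\le\tau_0^{\,j-r}$. Transcribing Tolsa's cancellation computations with $\nabla_xP_s$ in place of the Riesz kernel --- checking that none of them break down --- is the step I expect to require the most care.
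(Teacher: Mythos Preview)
Your martingale reduction and the treatment of the finest scale $j=k$ via corner cubes are both correct and match the paper. Your claim that $p(Q)\approx\theta_{j,p_s}$ under the standing hypothesis $\lambda_i\le\tau_0<1/d$ is also right: one has $\theta_{r,p_s}\,\ell_j/\ell_r\le\theta_{j,p_s}\bigl((d+1)d^n\tau_0^{\,n+2}\bigr)^{j-r}$ with $(d+1)d^n\tau_0^{\,n+2}<(d+1)/d^2\le 3/4$, so in fact every scale is ``good'' in Tolsa's sense and that half of his dichotomy is vacuous in this paper's setting.

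The gap is in how you propose to resolve the constant mismatch between near-field and far-field. You describe Tolsa's global step as ``exploiting spatial antisymmetry and reflection symmetry so that accumulated far-field contributions cannot annihilate accumulated near-field signals''; that is not the mechanism. All cubes of $\QQ^j$ are translates of one another, so passing from a single cube to an average over $\QQ^j$ at fixed $j$ buys nothing, and there is no cross-scale symmetry that cancels the far-field either. What Tolsa actually does --- and what the paper reproduces --- is a \emph{scale-grouping} argument: one introduces stopping scales $0=s_0<s_1<\cdots$ so that within each interval $I_r=[s_r,s_{r+1})$ the densities $\theta_{i,p_s}$ are comparable within a fixed factor $B$; then, inside a long interval, one packs $M_0$ consecutive scales together so that the accumulated near-field signal $\sum_i\theta_{i,p_s}\ge M_0B^{-1}\theta_{s_r,p_s}$ dominates the single far-field error $\lambda_qp_{q-1}\lesssim\theta_{s_r,p_s}$ once $M_0$ is large enough (the paper's Lemmas~\ref{lem4.6}--\ref{lem4.8}); short intervals require a separate increasing/decreasing-density analysis as in \cite[\S5.7--5.9]{T2}. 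Spatial antisymmetry \emph{is} used, but only inside the near-field lower bound of Lemma~\ref{lem4.6}, to discard the column $F_1$ aligned with $P$ and force the remaining integrand to have a sign. If you transcribe \cite[\S5]{T2} line by line you will meet all of this and the proof will close; but the mechanism you describe would not by itself produce a proof, because the scale-by-scale target $\sum_{Q\in\QQ^j}\|D_Q\Ps\mu_k\|^2\gtrsim\theta_{j,p_s}^2$ is not what is established.
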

	
	The proof will be analogous to that given in \cite[\textsection 5]{T2} for Riesz kernels. We will only carry out the steps where necessary modifications are needed. These stem as a result of the convolution kernel used in our context, which is not the Riesz kernel, as well as the different nature of the Cantor set we are considering. Theorem \ref{thm4.1} will follow from the next lemma:
	\begin{lem}
		\label{lem2.2}
		The following estimate holds,
		\begin{equation*}
			\sum_{Q\in \QQ^\ast} \|D_Q\Ps\mu_k\|^2\gtrsim \sum_{j=0}^{k-1}\theta_{j,p_s}^2.
		\end{equation*}
	\end{lem}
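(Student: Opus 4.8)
The plan is to adapt Tolsa's argument from \cite[\textsection 5]{T2}, written there for Riesz kernels, to the kernel $\nabla_xP_s$ and to the $s$-parabolic generations. Since $D_j\Ps\mu_k:=\sum_{Q\in\QQ^j}D_Q\Ps\mu_k$ and the functions $D_Q\Ps\mu_k$ are mutually orthogonal in $L^2(\mu_k)$ by Lemma \ref{lem1.3}, the claim is equivalent to $\sum_{j=0}^{k-1}\|D_j\Ps\mu_k\|^2\gtrsim\sum_{j=0}^{k-1}\theta_{j,p_s}^2$, and I would try to bound each $D_j\Ps\mu_k$ below by isolating a genuine ``scale-$j$ contribution''. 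For $\ox\in\text{supp}(\mu_k)$ let $Q_j(\ox)$ denote the generation-$j$ cube containing $\ox$ and set
$$
\varphi_j(\ox):=\int_{Q_j(\ox)\setminus Q_{j+1}(\ox)}\nabla_xP_s(\ox-\oy)\dd\mu_k(\oy)\ \ (0\le j<k),\qquad \varphi_k(\ox):=\int_{Q_k(\ox)}\nabla_xP_s(\ox-\oy)\dd\mu_k(\oy),
$$
so that $\Ps\mu_k=\sum_{j=0}^k\varphi_j$.

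The key is that these pieces interact rigidly with the martingale. Because the spatial part of $\nabla_xP_s$ is antisymmetric for each fixed time and $E_{p_s}$ is a product of Cantor sets, the computation behind Lemma \ref{lem1.2} shows that $\varphi_j$ has vanishing $\mu_k$-average on \emph{every} cube of every generation $\le j$; hence $S_j\varphi_l=0$ whenever $l\ge j$, so that $D_j\varphi_l=0$ for $l\ge j+1$, $D_j\varphi_j=S_{j+1}\varphi_j$, and $D_j\Ps\mu_k=S_{j+1}\varphi_j+\sum_{l=0}^{j-1}D_j\varphi_l$. Moreover, for $l<j$ the restriction of $\varphi_l$ to a cube $Q\in\QQ^j$ is $\Ps_{\mu_k}\chi_R$ for a \emph{fixed} annular region $R$ lying at $s$-parabolic distance $\gtrsim_{\tau_0}\ell_l$ from $Q$; the component-wise mean value estimate used in Lemma \ref{lem4.1.1}, together with the $s$-parabolic $(n+1)$-growth of $\mu_k$, bounds the oscillation of $\varphi_l$ over $Q$ by $\lesssim_{\tau_0}\theta_{l,p_s}\,\ell_j/\ell_l$, so that
$$
\Big\|\sum_{l=0}^{j-1}D_j\varphi_l\Big\|\lesssim\sum_{l=0}^{j-1}\theta_{l,p_s}\,\frac{\ell_j}{\ell_l}\lesssim_{\tau_0}\theta_{j,p_s},
$$
the series being geometric because $\theta_{l,p_s}\,\ell_j/\ell_l\le\theta_{j,p_s}\,(\tau_0^{n+2}(d+1)d^n)^{j-l}$ and $\tau_0<1/d$ forces $\tau_0^{n+2}(d+1)d^n<1$. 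The piece $\varphi_k$ does not enter $\sum_{j<k}\|D_j\Ps\mu_k\|^2$; for the later deduction of Theorem \ref{thm4.1} one records in addition that $S_k\varphi_k=0$ and $|\varphi_k|\lesssim\theta_{k,p_s}$ by polar integration, where $s>1/2$ is needed exactly as in Lemma \ref{lem4.1.4}.

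It then remains to prove the per-scale lower bound $\|S_{j+1}\varphi_j\|\gtrsim_{\tau_0}\theta_{j,p_s}$ for $0\le j<k$ and to combine the last two facts scale by scale, as in \cite[\textsection 5]{T2}. By the $s$-parabolic self-similarity of $E_{p_s}$, restricted to a fixed $Q\in\QQ^j$ the function $\varphi_j$ equals, after the affine $s$-parabolic map $Q^0\to Q$, precisely $\theta_{j,p_s}$ times a universal function — the top-scale annular potential of the uniform measure on a generation of the Cantor set built from the tail ratios $(\lambda_{j+1},\dots,\lambda_k)\in(0,\tau_0]$ — so it is enough to bound below, uniformly over all admissible tail sequences, the $L^2$-norm of the generation-$1$ average of that model function. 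For this I would use the explicit form $\nabla_xP_s(x,t)\simeq x\,t^{-\frac{n+2}{2s}}\phi_{n+2,s}\big(|x|\,t^{-\frac1{2s}}\big)\chi_{t>0}$ from the proof of Lemma \ref{lem1.2}: fixing the generation-$1$ sub-cube of $Q^0$ extreme in the first spatial coordinate, for points of its bulk the contributions to $\varphi_j$ from the sibling sub-cubes lying on the opposite spatial side all have first component of a single sign and, as $\phi_{n+2,s}$ is strictly positive, cannot cancel; their sum has size $\gtrsim_{\tau_0}\theta_{j,p_s}$, and this survives the generation-$1$ averaging on a set of $\mu_k$-measure a fixed fraction of $\mu_k(Q)$.

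The hard part will be precisely this per-scale non-degeneracy. For the Riesz kernel one produces non-cancellation by pairing an odd kernel with an essentially symmetric configuration; here $\nabla_xP_s$ is odd only in the spatial variable for fixed time and, crucially, is supported in $\{t>0\}$, so the sum that must be kept away from zero is temporally one-sided and a symmetry argument alone will not do — one has to exploit the positivity and radial monotonicity of $\phi_{n+2,s}$, with constants uniform in $(\lambda_{j+1},\dots,\lambda_k)$. Keeping those constants compatible with the cross-scale term $\sum_{l<j}D_j\varphi_l$, so that $S_{j+1}\varphi_j$ still dominates after summation over $j$, is the delicate point, and it is there that the quantitative conditions of the construction — $\tau_0<1/d$ and $d+1<d^{2s}$, which also force the $s$-parabolic gaps of $E_{p_s}$ to be bounded below — are used, exactly as in \cite[\textsection 5]{T2}.
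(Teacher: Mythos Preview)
Your decomposition $\Ps\mu_k=\sum_j\varphi_j$ and the identity $D_j\Ps\mu_k=S_{j+1}\varphi_j+\sum_{l<j}D_j\varphi_l$ are correct, and the observation that $\sum_{l<j}\theta_{l,p_s}\,\ell_j/\ell_l\lesssim\theta_{j,p_s}$ is both correct and genuinely useful: since $\theta_{l,p_s}\,\ell_j/\ell_l=\theta_{j,p_s}\prod_{i=l+1}^j(d+1)d^n\lambda_i^{\,n+2}$ and each factor is at most $(d+1)d^n\tau_0^{\,n+2}<(d+1)/d^2\le 3/4$, one gets $p_j\le 4\,\theta_{j,p_s}$ uniformly. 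In Tolsa's language every scale is already \emph{good}, so the good/bad dichotomy that the paper carries over from \cite{T2} (Lemmas~\ref{lem3.3}--\ref{lem4.5}) is in fact vacuous for this particular Cantor set and could be dropped.

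That said, the single-scale scheme $\|D_j\Ps\mu_k\|\ge\|S_{j+1}\varphi_j\|-\|\sum_{l<j}D_j\varphi_l\|$ has a real gap: you obtain $\|S_{j+1}\varphi_j\|\ge c_1\theta_{j,p_s}$ and a cross term $\le c_2\theta_{j,p_s}$ with $c_1,c_2$ both depending on $n,s,\tau_0$, and there is no reason why $c_1>c_2$. This is precisely why the paper (and \cite{T2}) does \emph{not} argue scale by scale. In Lemma~\ref{lem4.6} one compares $S_P$ and $S_Q$ with $P$ lying $h{+}1$ generations below $Q$, so that the main term $|S_P\Ps_{\mu_k}\chi_{Q\setminus P}|$ collects $\theta_{j,p_s}+\cdots+\theta_{j+h,p_s}$ while the cross term stays of size $\frac{\ell_j}{\ell_{j-1}}p_{j-1}\lesssim\theta_{j,p_s}$; Lemma~\ref{lem4.7} then takes $h=M_0-1$ large enough---depending on the ratio $c_2/c_1$---on an interval where the $\theta_{i,p_s}$ are mutually comparable, and it is this block length that forces the main term to dominate. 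Producing intervals with comparable densities requires the stopping-time construction (the set $\mathrm{Stop}$ and the ID/DD/terminal trichotomy for short intervals), which your per-scale setup does not replace. Your final sentence is right that the combination is ``exactly as in \cite[\S5]{T2}'', but that means invoking precisely this block-plus-stopping machinery; the $\varphi_j$-decomposition by itself does not shortcut it.
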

	Indeed, begin by observing that
	\begin{align*}
		\|S_k\Ps\mu_k\|^2&=\int \bigg\rvert \sum_{Q\in \QQ^k}S_Q\Ps\mu_k\bigg\rvert^2\dd\mu_k =\int \bigg\rvert \sum_{Q\in\QQ^k} \bigg( \frac{1}{\mu_k(Q)}\int_Q\Ps\mu_k\dd\mu_k\bigg)\chi_Q  \bigg\rvert^2\dd\mu_k\\
		&=\sum_{Q\in\QQ^k} \frac{1}{\mu_k(Q)}\bigg\rvert \int_Q\Ps\mu_k\dd\mu_k\bigg\rvert^2 \leq \sum_{Q\in\QQ^k}  \int_Q|\Ps\mu_k|^2\dd\mu_k=\|\Ps\mu_k\|^2,
	\end{align*}
	by the Cauchy-Schwarz inequality. Therefore, using \eqref{eq1.5} and Lemma \ref{lem2.2}, we get
	\begin{equation*}
		\|\Ps\mu_k\|^2 \geq \|S_k\Ps\mu_k\|^2=\sum_{Q\in\QQ^\ast}\|D_Q\Ps\mu_k\|^2\gtrsim \sum_{j=0}^{k-1}\theta_{j,p_s}^2.
	\end{equation*}
	Hence, we only have to prove $\|\Ps\mu_k\|^2\gtrsim \theta_{k,p_s}^2$. Consider $Q\in \pazocal{Q}^k$ and compute
	\begin{align}
		\|\Ps\mu_k\|&=\bigg\| \sum_{Q\in\QQ^k} \chi_Q\Ps\mu_k \bigg\| \nonumber \\
		&=\bigg\| \sum_{Q\in\QQ^k} \chi_Q\Ps_{\mu_k}\chi_Q + \sum_{Q\in\QQ^k} \chi_Q\Ps{\mu_k}\chi_{\mathbb{R}^{n+1}\setminus{Q}} + S_k\Ps\mu_k-S_k\Ps\mu_k \bigg\| \nonumber\\
		&\geq \bigg\| \sum_{Q\in\QQ^k} \chi_Q\Ps_{\mu_k}\chi_Q \bigg\| - \|S_k\Ps\mu_k\|- \bigg\| \sum_{Q\in\QQ^k} \chi_Q\Ps{\mu_k}\chi_{\mathbb{R}^{n+1}\setminus{Q}} - S_k\Ps\mu_k \bigg\| \label{eq2.1}
	\end{align}
	
	We deal with the first term. Name $Q_{\text{up-ri}}\subset Q$ the $s$-parabolic cube sharing the upper right-most vertex with $Q$ and with side length $\ell(Q)/4$. We name $Q_{\text{lo-le}}\subset Q$ the analogous $s$-parabolic cube that shares the lower left-most vertex with $Q$. We pick $\ox=(x,t)\in Q_{\text{up-ri}}$ and use \cite[Theorem 2.2]{HMPr} to compute:
	\begin{align*}
		|\Ps_{\mu_k}\chi_Q(\ox)| = \frac{1}{|E_{k,p_s}|}\bigg\rvert \int_Q \nabla_xP_s(\ox-\oy)\dd\oy \bigg\rvert \approx \frac{1}{|E_{k,p_s}|}\bigg\rvert \int_Q \frac{(x-y)(t-u)}{|\ox-\oy|_{p_s}^{n+2s+2}}\chi_{t-u>0} \dd y \dd u \bigg\rvert 
	\end{align*}
	
	To estimate the previous modulus, we fix the first component of $x-y$ and denote it by $x_1-y_1$ and study the integral
	\begin{equation}
		\label{eq2.2}
		\bigg\rvert \int_Q \frac{(x_1-y_1)(t-u)}{|\ox-\oy|_{p_s}^{n+2s+2}}\chi_{t-u>0} \dd y \dd u \bigg\rvert 
	\end{equation}
	We name $R_{\rightarrow}:=\{(y,u)\in Q\,:\, y_1>x_1\}$ and $R_{\leftarrow}:=\{(y,u)\in Q\,:\, 2x_1-\ell(Q)<y_1<x_1\}$. We depict such regions in Figure \ref{fig1}. By the spatial anti-symmetry of the integration kernel one gets
	\begin{equation*}
		\int_{R_{\rightarrow}} \frac{(x_1-y_1)(t-u)}{|\ox-\oy|_{p_s}^{n+2s+2}}\chi_{t-u>0} \dd y \dd u + \int_{R_{\leftarrow}} \frac{(x_1-y_1)(t-u)}{|\ox-\oy|_{p_s}^{n+2s+2}}\chi_{t-u>0} \dd y \dd u = 0.
	\end{equation*}
    \begin{figure}[t]
		\centering
		\includegraphics[width=0.5\textwidth]{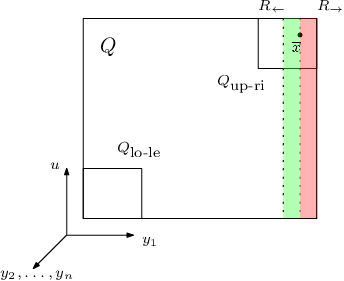}
		\caption{Here $Q$ is an $s$-parabolic cube of the $k$-th generation, $Q_{\text{up-ri}}$ is the $s$-parabolic cube of side length $\ell(Q)/4$ contained in $Q$ and sharing its upper right-most corner. $Q_{\text{lo-le}}$ is the analogous $s$-parabolic cube sharing the lower left-most corner. In red we depict the region $R_{\rightarrow}$ and in green the region $R_{\leftarrow}$.}
		\label{fig1}
	\end{figure}
    
	Then, since $x_1-y_1$ is nonnegative if $y_1\in Q\setminus(R_\rightarrow\cup R_\leftarrow)$, returning to \eqref{eq2.2} we obtain
	\begin{align*}
		\bigg\rvert \int_Q \frac{(x_1-y_1)(t-u)}{|\ox-\oy|_{p_s}^{n+2s+2}}\chi_{t-u>0} \dd y \dd u \bigg\rvert &= \int_{Q\setminus{(R_\rightarrow\cup R_\leftarrow)}} \frac{(x_1-y_1)(t-u)}{|\ox-\oy|_{p_s}^{n+2s+2}}\chi_{t-u>0} \dd y \dd u \\
		&\geq \int_{Q_{\text{lo-le}}} \frac{(x_1-y_1)(t-u)}{|\ox-\oy|_{p_s}^{n+2s+2}}\chi_{t-u>0} \dd y \dd u \\
		&\gtrsim \frac{\ell_k^{1+2s}}{\ell_k^{n+2s+2}}|Q_{\text{lo-le}}|\simeq \theta_{k,p_s}|E_{k,p_s}|.
	\end{align*}
	Hence we deduce $|\Ps_{\mu_k}\chi_Q(\ox)|\gtrsim \theta_{k,p_s}$ for all $\ox \in Q_{\text{up-ri}}$.	Then,
	\begin{align*}
		\bigg\| \sum_{Q\in\QQ^k} \chi_Q\Ps_{\mu_k}\chi_Q \bigg\|^2&= \sum_{Q\in \QQ^k}\int_Q|\Ps_{\mu_k}\chi_Q|^2\dd\mu_k\geq \sum_{Q\in \QQ^k}\int_{Q_{\text{up-ri}}}|\Ps_{\mu_k}\chi_Q|^2\dd\mu_k\gtrsim \theta_{k,p_s}^2.
	\end{align*}
	To deal with the second term in \eqref{eq2.1} we simply use that $\|S_k\Ps\mu_k\|\leq \|\Ps\mu_k\|$. On the other hand, by Lemma \ref{lem4.1.1} and \eqref{eq2.1}, if $\ox\in Q\in \QQ^k$,
	
	\begin{align*}
		|\Ps_{\mu_k}\chi_{\mathbb{R}^{n+1}\setminus{Q}}(\ox)&-S_k\Ps\mu_k(\ox)|=\bigg\rvert \Ps_{\mu_k}\chi_{\mathbb{R}^{n+1}\setminus{Q}}(\ox)-\sum_{Q\in \QQ^k} S_Q\Ps\mu_k(\ox) \bigg\rvert\\
		&=\bigg\rvert \Ps_{\mu_k}\chi_{\mathbb{R}^{n+1}\setminus{Q}}(\ox)- S_Q\Ps_{\mu_k}\chi_{\mathbb{R}^{n+1}\setminus{Q}}(\ox) \bigg\rvert\\
		&=\bigg\rvert \frac{1}{\mu_k(Q)}\int_Q\Big( \Ps_{\mu_k}\chi_{\mathbb{R}^{n+1}\setminus{Q}}(\ox)- \Ps_{\mu_k}\chi_{\mathbb{R}^{n+1}\setminus{Q}}(\ox')\Big)\dd\ox' \bigg\rvert\\
		&\lesssim \sum_{j=0}^{k-1}\theta_{j,p_s}\frac{\ell_{k-1}}{\ell_j} \leq \bigg( \sum_{j=0}^{k-1} \theta_{j,p_s}^2 \bigg)^{1/2}\bigg( \sum_{j=0}^{k-1} \frac{1}{d^{2j}}\bigg)^{1/2}\lesssim \bigg( \sum_{j=0}^{k-1} \theta_{j,p_s}^2 \bigg)^{1/2}.
	\end{align*}
	Then, for the third term in \eqref{eq2.1} we get
	\begin{equation*}
		\bigg\| \sum_{Q\in\QQ^k} \chi_Q\Ps{\mu_k}\chi_{\mathbb{R}^{n+1}\setminus{Q}} - S_k\Ps\mu_k \bigg\| \lesssim \bigg( \sum_{j=0}^{k-1} \theta_{j,p_s}^2 \bigg)^{1/2}.
	\end{equation*}
	Combining all three estimates in \eqref{eq2.1} we get
	\begin{equation*}
		\|\Ps\mu_k\|\gtrsim \theta_{k,p_s}-\|\Ps\mu_k\|-\bigg( \sum_{j=0}^{k-1} \theta_{j,p_s}^2 \bigg)^{1/2},
	\end{equation*}
	so that applying Lemma \ref{lem2.2} together with \eqref{eq1.5},
	\begin{equation*}
		\theta_{k,p_s} \lesssim \|\Ps\mu_k\|+\bigg( \sum_{j=0}^{k-1} \theta_{j,p_s}^2 \bigg)^{1/2} \lesssim \|\Ps\mu_k\|,
	\end{equation*}
	and this finishes the proof of Theorem \ref{thm4.1} assuming Lemma \ref{lem2.2}. In the forthcoming subsections we give basic notation and some details on how to prove the previous Lemma. The arguments are inspired by those given by Tolsa in \cite{T2} for Riesz kernels, so we only focus on specifying the computations which depend on the kernel itself and the nature of the $s$-parabolic Cantor set we are considering.
	
	\subsection{The stopping scales and intervals \mathinhead{I_k}{}} 
	Let $B$ be some big constant (say $B>100$) to be fixed below. We define inductively the following subset
	\begin{equation*}
		\text{Stop}:=\{ s_0,\ldots,s_m \}\subset \{0,1,\ldots,k\}.
	\end{equation*}
	First, set $s_0:=0$. If for some $j\geq 0$, $s_j$ has already been defined and $s_j<k-1$, then $s_{j+1}$ is the least integer $i>s_j$ which verifies at least one of the following:
	\begin{enumerate}
		\item[a)]  $i=k$, or
		\item[b)] $\theta_{i,p_s}>B\theta_{s_j,p_s}$, or
		\item[c)] $\theta_{i,p_s}<B^{-1}\theta_{s_j,p_s}$.
	\end{enumerate}
	We finish our construction of Stop when we find some $s_{j+1}=k$. Notice that we have
	\begin{equation*}
		[0,k-1]=\bigcup_{j=0}^{m-1} [s_j,s_{j+1}) =: \bigcup_{j=0}^{m-1} I_j,
	\end{equation*}
	with $I_j$ pairwise disjoint. Observe that $|I_j|=s_{j+1}-s_j$ coincides with $\#(I_j\cap \mathbb{Z})$. We write
	\begin{equation*}
		T_j\mu_k:=\sum_{s_j\leq i <s_{j+1}} D_i\Ps\mu_k, \qquad \text{for $0\leq j \leq m$}.
	\end{equation*}
	Then, $S_k\Ps\mu_k = \sum_{i=0}^{k-1} S_{i+1}\Ps\mu_k - S_i\Ps\mu_k = \sum_{i=0}^{k-1} D_i\Ps\mu_k = \sum_{j=0}^{m-1} T_j\mu_k$, and since functions $D_j\Ps\mu_k$ are pairwise orthogonal,
	\begin{equation*}
		\|S_k\Ps\mu_k\|^2=\sum_{j=0}^{m-1} \|T_j\mu_k\|^2.
	\end{equation*}	
	
	\subsection{Good and bad scales} To simplify notation we shall write, for any $A\subset [0,k]$,
	\begin{equation*}
		\sigma(A):=\sum_{j\in A\cap \mathbb{Z}} \theta_{j,p_s}^2.
	\end{equation*}
	We will say that $j\in \{0,1,\ldots,k-1\}$ is a \textit{good scale} and we write $j\in \pazocal{G}$, if
	\begin{equation*}
		\sum_{i=0}^j \theta_{i,p_s}\frac{\ell_j}{\ell_i} =: p_j \leq 40\theta_{j,p_s}.
	\end{equation*}
	Otherwise we that $j$ is a \textit{bad scale} and we write $j\in \pazocal{B}$.
	\begin{lem}
		\label{lem3.3}
		The following holds,
		\begin{equation*}
			\sigma(\pazocal{B}) \leq \frac{1}{400} \sigma([0,k-1]).
		\end{equation*}
	\end{lem}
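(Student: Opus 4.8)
The plan is to exploit the fast geometric decay of the ratios $\ell_j/\ell_i$: since $0<\lambda_r\le\tau_0<1/d\le 1/2$ for every $r$, one has $\ell_j/\ell_i=\lambda_{i+1}\cdots\lambda_j\le\tau_0^{\,j-i}$ whenever $0\le i\le j$. Hence every summand in $p_j=\sum_{i=0}^{j}\theta_{i,p_s}\,\ell_j/\ell_i$ carries a geometric weight, and two geometric summations will be enough to absorb the factor $40^{2}=1600$ built into the definition of a bad scale.

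First I would record, for every $j$, the Cauchy--Schwarz estimate
\[
 p_j^{\,2}=\Bigl(\sum_{i=0}^{j}\theta_{i,p_s}\,\tfrac{\ell_j}{\ell_i}\Bigr)^{2}
 \le\Bigl(\sum_{i=0}^{j}\theta_{i,p_s}^{2}\,\tfrac{\ell_j}{\ell_i}\Bigr)\Bigl(\sum_{i=0}^{j}\tfrac{\ell_j}{\ell_i}\Bigr)
 \le\frac{1}{1-\tau_0}\sum_{i=0}^{j}\theta_{i,p_s}^{2}\,\frac{\ell_j}{\ell_i},
\]
using $\sum_{i=0}^{j}\ell_j/\ell_i\le\sum_{m\ge 0}\tau_0^{\,m}=(1-\tau_0)^{-1}$ in the last step.

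Next, by definition $j\in\pazocal{B}$ gives $\theta_{j,p_s}<p_j/40$, hence $\theta_{j,p_s}^{2}<p_j^{\,2}/1600$. Summing over $j\in\pazocal{B}$, enlarging the outer sum to all $j\in\{0,\ldots,k-1\}$, interchanging the order of the $i$- and $j$-summations, and applying the geometric bound $\sum_{j\ge i}\ell_j/\ell_i\le(1-\tau_0)^{-1}$ a second time yields
\[
 \sigma(\pazocal{B})<\frac{1}{1600}\sum_{j\in\pazocal{B}}p_j^{\,2}
 \le\frac{1}{1600(1-\tau_0)}\sum_{i=0}^{k-1}\theta_{i,p_s}^{2}\sum_{j=i}^{k-1}\frac{\ell_j}{\ell_i}
 \le\frac{1}{1600(1-\tau_0)^{2}}\,\sigma([0,k-1]).
\]

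Finally, since $\tau_0<1/d\le 1/2$ we have $(1-\tau_0)^{2}>1/4$, so $\tfrac{1}{1600(1-\tau_0)^{2}}<\tfrac{1}{400}$, which is exactly the asserted inequality; the constant $40$ in the definition of a good scale is chosen precisely so that this last arithmetic closes. I do not anticipate any genuine obstacle here: the only points needing a little care are keeping the two geometric summations straight (over $i$ for fixed $j$, and over $j$ for fixed $i$) and checking that the numerical constants line up.
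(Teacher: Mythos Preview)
Your proof is correct and follows essentially the same approach as the paper: Cauchy--Schwarz on $p_j$ followed by two geometric summations and the observation that $j\in\pazocal{B}$ forces $\theta_{j,p_s}^2<p_j^2/1600$. The only cosmetic difference is that you bound $\ell_j/\ell_i\le\tau_0^{\,j-i}$ while the paper uses $\ell_j/\ell_i\le d^{-(j-i)}$; since $\tau_0<1/d\le1/2$, both yield $(1-\tau_0)^{-2}<4$ (respectively $(d/(d-1))^2\le4$), and the constants close identically.
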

	\begin{proof}
		Proceeding as in the proof of Lemma \ref{lem4.1.3} we get
		\begin{align*}
			\sum_{j=0}^{k-1} p_j^2 \leq \bigg( \frac{d}{d-1} \bigg)^2 \sum_{i=0}^{k-1}\theta_{i,p_s}^2 = \bigg( \frac{d}{d-1} \bigg)^2\sigma([0,k-1]) \leq 4\sigma([0,k-1]).
		\end{align*}
		Then,
		\begin{equation*}
			\sigma(\pazocal{B})=\sum_{j\in\pazocal{B}}\theta_{j,p_s}^2 \leq \frac{1}{1600} \sum_{j=0}^{k-1}p_j ^2 \leq \frac{1}{400} \sigma([0,k-1]).
		\end{equation*}
	\end{proof}
	
	\subsection{Good and bad intervals} We will say that an interval $I_j$ is \textit{good} if
	\begin{equation*}
		\sigma(I_j\cap \pazocal{G})\geq \frac{1}{400}\sigma(I_j).
	\end{equation*}
	Otherwise we say that it is bad, meaning,
	\begin{equation*}
		\sigma(I_j\cap \pazocal{B}) > \frac{399}{400}\sigma(I_j).
	\end{equation*}
	
	\begin{lem}
		\label{lem4.4}
		The following holds,
		\begin{equation*}
			\sigma([0,k-1])\leq \frac{399}{398} \sum_{I_j \,\text{\normalfont{good}}} \sigma(I_j).
		\end{equation*}
	\end{lem}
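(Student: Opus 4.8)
The plan is to split $[0,k-1]$ into its good and bad intervals and to control the total $\sigma$-mass carried by the bad ones via Lemma \ref{lem3.3}. Since the intervals $I_0,\dots,I_{m-1}$ are pairwise disjoint and cover $[0,k-1]$, we have
\begin{equation*}
	\sigma([0,k-1]) = \sum_{j=0}^{m-1}\sigma(I_j) = \sum_{I_j\,\text{good}}\sigma(I_j) + \sum_{I_j\,\text{bad}}\sigma(I_j),
\end{equation*}
so it suffices to show that $\sum_{I_j\,\text{bad}}\sigma(I_j) \leq \tfrac{1}{399}\sigma([0,k-1])$; rearranging this gives the claim with the stated constant $\tfrac{399}{398}$.

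To bound the bad part, I would use the very definition of a bad interval: if $I_j$ is bad then $\sigma(I_j\cap\pazocal{B}) > \tfrac{399}{400}\sigma(I_j)$. Summing over bad intervals and noting that the sets $I_j\cap\pazocal{B}$ are pairwise disjoint subsets of $\pazocal{B}\subset[0,k-1]$, one obtains
\begin{equation*}
	\sigma(\pazocal{B}) \;\geq\; \sum_{I_j\,\text{bad}}\sigma(I_j\cap\pazocal{B}) \;>\; \frac{399}{400}\sum_{I_j\,\text{bad}}\sigma(I_j).
\end{equation*}
Now invoke Lemma \ref{lem3.3}, which states $\sigma(\pazocal{B}) \leq \tfrac{1}{400}\sigma([0,k-1])$. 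Combining the two displays yields $\tfrac{399}{400}\sum_{I_j\,\text{bad}}\sigma(I_j) < \tfrac{1}{400}\sigma([0,k-1])$, i.e. $\sum_{I_j\,\text{bad}}\sigma(I_j) < \tfrac{1}{399}\sigma([0,k-1])$, which is exactly what was needed.

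There is essentially no serious obstacle here: the argument is a short bookkeeping step once Lemma \ref{lem3.3} and the definitions of good/bad scales and intervals are in place. The only points worth a moment's care are that $\pazocal{B}$ is entirely contained in $[0,k-1]$ so that no mass is lost when passing from $\sigma(\pazocal{B})$ to the sum over bad intervals, and that the decomposition $\bigcup_j I_j = [0,k-1]$ is into disjoint sets so that $\sigma$ is additive over it; both are immediate from the construction of the stopping scales.
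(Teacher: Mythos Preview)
Your proof is correct and follows essentially the same approach as the paper: split $\sigma([0,k-1])$ into good and bad intervals, use the defining inequality $\sigma(I_j\cap\pazocal{B})>\tfrac{399}{400}\sigma(I_j)$ for bad $I_j$ together with Lemma~\ref{lem3.3} to bound $\sum_{\text{bad}}\sigma(I_j)<\tfrac{1}{399}\sigma([0,k-1])$, and rearrange. The paper's argument is line-for-line the same.
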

	\begin{proof}
		If $I_j$ is bad, then $\sigma(I_j\cap \pazocal{G})<\frac{1}{400}\sigma(I_j)$ and therefore
		\begin{equation*}
			\sigma(I_j)<\frac{1}{400}\sigma(I_j)+\sigma(I_j\cap \pazocal{B}), \quad \text{implying} \quad \sigma(I_j\cap \pazocal{B})>\frac{399}{400}\sigma(I_j).
		\end{equation*}
		By Lemma \ref{lem3.3} we then obtain
		\begin{equation*}
			\sum_{I_j\, \text{bad}} \sigma(I_j) \leq \frac{400}{399}\sigma(\pazocal{B}) \leq \frac{1}{399}\sigma([0,k-1]),
		\end{equation*}
		so we get
		\begin{equation*}
			\sigma([0,k-1])\leq \sum_{I_j\, \text{good}} \sigma(I_j) + \frac{1}{399}\sigma([0,k-1]),
		\end{equation*}
		and the result follows.
	\end{proof}
	
	\subsection{Long and short intervals} Let $N_L$ be some (large) integer to be fixed below. We say that an interval $I_j$ i \textit{long} if
	\begin{equation*}
		|I_j|=s_{j+1}-s_j\geq N_L,
	\end{equation*}
	and otherwise we say it is \textit{short}.
	\begin{lem}
		\label{lem4.5}
		Let $I_j$ be good and $j_0:=\min(I_j\cap \pazocal{G})$. Then,
		\begin{equation*}
			j_0-s_j\leq \frac{400B^4}{400B^4+1}(s_{j+1}-s_j).
		\end{equation*}
	\end{lem}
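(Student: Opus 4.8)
The plan is to exploit the fact that, by construction of the stopping set $\text{Stop}$, the density $\theta_{i,p_s}$ oscillates by at most a factor $B$ along a single interval $I_j$, so that $\sigma$ restricted to any block of consecutive scales inside $I_j$ is comparable to the number of integers in that block.

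First I would record the oscillation estimate: for every integer $i$ with $s_j\le i<s_{j+1}$ the index $i$ fails all of conditions (a), (b), (c) in the definition of $\text{Stop}$ (otherwise $s_{j+1}$ would be at most $i$), and failing (b) and (c) yields $B^{-1}\theta_{s_j,p_s}\le\theta_{i,p_s}\le B\theta_{s_j,p_s}$, which trivially also holds for $i=s_j$. Hence $\theta_{i,p_s}^2\le B^{4}\theta_{i',p_s}^2$ for any $i,i'\in I_j\cap\mathbb{Z}$, and setting $m:=\min_{i\in I_j\cap\mathbb{Z}}\theta_{i,p_s}^2>0$ we obtain the two-sided bound $(\#A)\,m\le\sigma(A)\le(\#A)\,B^{4}m$ for every $A\subseteq I_j\cap\mathbb{Z}$.

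Next I would locate the good scales: since $j_0=\min(I_j\cap\pazocal{G})$, every integer of $[s_j,j_0)$ is a bad scale and every good scale of $I_j$ lies in $[j_0,s_{j+1})$, so $\sigma(I_j\cap\pazocal{G})\le\sigma([j_0,s_{j+1}))$; note also that the hypothesis that $I_j$ is good forces $\sigma(I_j\cap\pazocal{G})\ge\tfrac{1}{400}\sigma(I_j)>0$, which is precisely what guarantees that $j_0$ exists. Writing $a:=j_0-s_j$ and $b:=s_{j+1}-j_0$ and combining this with the bounds of the previous step and the goodness of $I_j$, one gets the chain
\[
a\,m\;\le\;\sigma([s_j,j_0))\;\le\;\sigma(I_j)\;\le\;400\,\sigma(I_j\cap\pazocal{G})\;\le\;400\,\sigma([j_0,s_{j+1}))\;\le\;400\,b\,B^{4}m .
\]
Cancelling $m>0$ gives $a\le 400B^{4}b=400B^{4}\big((s_{j+1}-s_j)-(j_0-s_j)\big)$, and solving this linear inequality for $j_0-s_j$ produces exactly $j_0-s_j\le\frac{400B^{4}}{400B^{4}+1}(s_{j+1}-s_j)$.

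I do not expect a genuine obstacle here; the points requiring care are the bookkeeping of the two factors $B^{2}$ that combine into $B^{4}$ (one from $\theta_{i,p_s}\le B\theta_{s_j,p_s}$, one from $\theta_{i,p_s}\ge B^{-1}\theta_{s_j,p_s}$), and the placement of the goodness inequality: it must enter as $\sigma(I_j)\le 400\,\sigma(I_j\cap\pazocal{G})$ with $\sigma(I_j\cap\pazocal{G})$ then estimated by the \emph{small} block $\sigma([j_0,s_{j+1}))$, which has only $s_{j+1}-j_0$ scales, since the more naive route --- bounding $\sigma(I_j\cap\pazocal{B})$ by $\tfrac{399}{400}\sigma(I_j)$ and $\sigma(I_j)$ by $(s_{j+1}-s_j)B^{4}m$ --- would only yield $j_0-s_j\lesssim B^{4}(s_{j+1}-s_j)$, which is too weak to be of the stated form.
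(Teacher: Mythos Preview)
Your proof is correct and follows the same strategy as the paper: use the oscillation bound $B^{-1}\theta_{s_j,p_s}\le\theta_{i,p_s}\le B\theta_{s_j,p_s}$ along $I_j$ together with the inclusion $I_j\cap\pazocal{G}\subseteq[j_0,s_{j+1})$ and the goodness inequality $\sigma(I_j)\le 400\,\sigma(I_j\cap\pazocal{G})$ to obtain $\lambda\le 400B^{4}(\ell-\lambda)$. Your reformulation via $m:=\min_{i\in I_j}\theta_{i,p_s}^2$ is just a cosmetic repackaging of the paper's use of $\theta_{s_j,p_s}^2$ as the reference scale.
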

	\begin{proof}
		Write $\ell:=s_{j+1}-s_j$ and $\lambda=j_0-s_j$. Then, by definition of $s_{j+1}$ we have
		\begin{align*}
			\sigma(I_j\cap \pazocal{G}) \leq \sum_{s_j\leq i <s_{j+1}} \theta_{i,p_s}^2\leq B^2\theta_{s_j,p_s}^2(\ell-\lambda),
		\end{align*}
		as well as
		\begin{align*}
			\sigma(I_j\cap \pazocal{B}) \leq \sum_{i\in \pazocal{B} \,:\, s_j\leq i <s_{j+1}} \theta_{i,p_s}^2\geq B^{-2}\theta_{s_j,p_s}^2\lambda.
		\end{align*}
		Since $I_j$ is good, $\sigma(I_j\cap \pazocal{B})\leq 400 \sigma(I_j\cap \pazocal{G})$, so
		\begin{equation*}
			B^2\lambda \leq 400B^2(\ell-\lambda), \quad \text{that is} \quad \lambda \leq \frac{400B^4}{400B^4+1}\ell,
		\end{equation*}
		and the result follows.
	\end{proof}	
	
	Next we prove a couple of technical lemmas that will be fundamental to provide estimates for intervals $I_j$ which are long and good.
	
	\begin{lem}
    \label{lem4.6}
		Let $0< j \leq k-1$ and $h\geq 0$ integer. If there exists a constant $C_6(n,s,h)$ such that
		\begin{equation*}
			\frac{\ell_j}{\ell_{j-1}}p_{j-1} \leq C_6\big( \theta_{j,p_s}+\theta_{j+1,p_s}+\cdots + \theta_{j+h,p_s} \big),
		\end{equation*}
		then for some other constant $C_7(n,s,h)$,
		\begin{equation*}
			\sum_{i=j}^{j+h}\|D_i\Ps\mu_k\|^2\geq C_7^{-1}\frac{1}{(d+1)^hd^{hn}}\big( \theta_{j,p_s}+\theta_{j+1,p_s}+\cdots + \theta_{j+h,p_s} \big)^2.
		\end{equation*}
	\end{lem}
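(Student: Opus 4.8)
The plan is to mimic the argument of Tolsa \cite[\textsection 5]{T2}, modifying only the two points where the kernel $\nabla_xP_s$ and the $s$-parabolic structure of $E_{p_s}$ enter. First I would use that the increments $D_i\Ps\mu_k$ are mutually orthogonal in $L^2(\mu_k)$ (a consequence of Lemma \ref{lem1.3}, summing over generations), together with $D_i=S_{i+1}-S_i$, to write
\[
\sum_{i=j}^{j+h}\|D_i\Ps\mu_k\|^2=\Big\|\sum_{i=j}^{j+h}D_i\Ps\mu_k\Big\|^2=\|S_{j+h+1}\Ps\mu_k-S_j\Ps\mu_k\|^2=\sum_{Q\in\QQ^j}\big\|\chi_Q\big(S_{j+h+1}\Ps\mu_k-S_j\Ps\mu_k\big)\big\|^2,
\]
with the convention $S_{j+h+1}=S_k$ when $j+h+1>k$ (this boundary case only shortens the chains below). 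It therefore suffices to bound, from below, the contribution of a single fixed cube $Q=Q^j\in\QQ^j$ by $c\,\mu_k(Q)\,(d+1)^{-h}d^{-nh}\,(\theta_{j,p_s}+\dots+\theta_{j+h,p_s})^2$ with $c=c(n,s,h)$, and then add over $Q$. On $Q$ I split $\Ps\mu_k=\pazocal P^s_{\mu_k}\chi_Q+\pazocal P^s_{\mu_k}\chi_{\R^{n+1}\setminus Q}$; by Lemma \ref{lem1.2} the average over $Q$ of the ``near'' part $\pazocal P^s_{\mu_k}\chi_Q$ is zero, so $\chi_Q(S_{j+h+1}\Ps\mu_k-S_j\Ps\mu_k)=\chi_QS_{j+h+1}(\pazocal P^s_{\mu_k}\chi_Q)+\chi_Q\big(S_{j+h+1}(\pazocal P^s_{\mu_k}\chi_{\R^{n+1}\setminus Q})-(\pazocal P^s_{\mu_k}\chi_{\R^{n+1}\setminus Q})_{Q}\big)$.

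For the ``far'' term I would invoke Lemma \ref{lem4.1.1} in the sharper form that actually appears in its proof: on a generation-$i$ subcube of $Q$ the function $\pazocal P^s_{\mu_k}\chi_{\R^{n+1}\setminus Q}$ oscillates by at most $\lesssim \tfrac{\ell_i}{\ell_{j-1}}p_{j-1}$; since $\sum_{i\ge j}(\ell_i/\ell_j)^2\lesssim_{\tau_0}1$, the hypothesis $\tfrac{\ell_j}{\ell_{j-1}}p_{j-1}\le C_6(\theta_{j,p_s}+\dots+\theta_{j+h,p_s})$ shows that the far term contributes at most $\lesssim C_6^2(\theta_{j,p_s}+\dots+\theta_{j+h,p_s})^2\mu_k(Q)$ to $\sum_{i=j}^{j+h}\|\chi_QD_i(\cdot)\|^2$. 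For the near term I would use the self-similarity of the construction together with the $s$-parabolic homogeneity $\nabla_xP_s(\lambda x,\lambda^{2s}t)=\lambda^{-(n+1)}\nabla_xP_s(x,t)$: rescaling $Q$ to $[0,1]^{n+1}$, the function $\pazocal P^s_{\mu_k}\chi_Q$ becomes $\theta_{j,p_s}$ times $\Ps\widetilde\mu$, where $\widetilde\mu$ is the uniform probability measure of the generation-$(k-j)$ $s$-parabolic Cantor set associated with the shifted sequence $(\lambda_{j+r})_{r\ge1}$; hence $\sum_{i=j}^{j+h}\|\chi_QD_i(\pazocal P^s_{\mu_k}\chi_Q)\|^2=\mu_k(Q)\,\theta_{j,p_s}^2\,\|\widetilde S_{h+1}\Ps\widetilde\mu\|_{L^2(\widetilde\mu)}^2$, and $\theta_{j,p_s}^2\widetilde\theta_{r}^2=\theta_{j+r,p_s}^2$ for the corresponding densities $\widetilde\theta_r$.

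The core of the argument is the lower bound $\|\widetilde S_{h+1}\Ps\widetilde\mu\|_{L^2(\widetilde\mu)}^2\gtrsim\sum_{r=0}^h\widetilde\theta_r^2$. For this I would run, inside the shifted Cantor set, the same anti-symmetric ``corner'' computation as in the proof of Theorem \ref{thm4.1}: on a generation-$r$ cube $R$ and a point $\ox$ in the descendant of $R$ sharing its upper-right-most vertex, the spatial anti-symmetry of $\nabla_xP_s(\cdot,t)$ makes the contributions of the symmetric regions $R_{\rightarrow},R_{\leftarrow}$ cancel, leaving the lower-left portion of $R$ with a contribution of definite sign in the first spatial coordinate and of size $\gtrsim\widetilde\theta_r$; with Lemma \ref{lem1.2} this yields $\|\widetilde D_r(\pazocal P^s_{\widetilde\mu}\chi_R)\|_{L^2(\widetilde\mu)}^2\gtrsim\widetilde\theta_r^2\widetilde\mu(R)$ at every scale, and one checks that for the present construction $\widetilde p_r\le C(\tau_0)\widetilde\theta_r$ for all $r$ (because consecutive $\widetilde\theta$'s change by a factor $<(d+1)/d$ while lengths shrink by $\le\tau_0<1/d$, so $\widetilde p_r-\widetilde\theta_r=\sum_{r'<r}\widetilde\theta_{r'}\tfrac{\widetilde\ell_r}{\widetilde\ell_{r'}}\lesssim\widetilde\theta_r$), i.e. every scale is ``good''; this makes the internal far parts harmless. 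Feeding this back and combining with the previous paragraph via the triangle inequality (choosing $C_6=C_6(n,s,h)$ small enough that $C_6(\theta_j+\dots+\theta_{j+h})\le\tfrac12(\sum_{r}\theta_{j+r}^2)^{1/2}$, which is where the hypothesis is used), one obtains $\sum_{i=j}^{j+h}\|\chi_QD_i\Ps\mu_k\|^2\gtrsim\mu_k(Q)\sum_{r=0}^h\theta_{j+r,p_s}^2$. Summing over $Q\in\QQ^j$ gives
\[
\sum_{i=j}^{j+h}\|D_i\Ps\mu_k\|^2\gtrsim\sum_{i=j}^{j+h}\theta_{i,p_s}^2\ge\frac{1}{h+1}\Big(\sum_{i=j}^{j+h}\theta_{i,p_s}\Big)^2\ge\frac{1}{(d+1)^hd^{nh}}\Big(\sum_{i=j}^{j+h}\theta_{i,p_s}\Big)^2,
\]
which is the claim (the weak normalisation $(d+1)^{-h}d^{-nh}$ is there only so that the statement chains cleanly with the interval estimates that follow).

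The main obstacle I expect is the bookkeeping of constants in the last combination step, namely verifying that the one-signed ``corner'' lower bound for the near part genuinely dominates the accumulated far-field oscillation: this forces one to fix $C_6$ (small, depending on $n,s,h$) in the course of the proof, and to control the internal far parts not by a single crude estimate but through the geometric decay $\lambda_{j+r}\le\tau_0<1/d$. A minor technical point is that the corner estimate of Theorem \ref{thm4.1} is stated for a subcube of side $\ell(R)/4$, whereas here it must be applied to the corner descendant of side $\le\tau_0\ell(R)$; re-running that computation one sees it only needs $\tau_0<1/2$, which holds.
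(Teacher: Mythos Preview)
Your decomposition into near and far parts, the use of the oscillation estimate for the far part, and the rescaling to the shifted Cantor set are all correct and mirror the paper's structure. The genuine problem is in your ``core'' step: you try to prove $\|\widetilde S_{h+1}\Ps\widetilde\mu\|^2\gtrsim\sum_{r=0}^h\widetilde\theta_r^2$ by bounding each $\|\widetilde D_r\Ps\widetilde\mu\|^2$ from below separately. Your observation that $\widetilde p_r\le K\widetilde\theta_r$ with $K=K(n,s,\tau_0)$ is correct (indeed $K\le d^2/(d^2-d-1)$), but it does \emph{not} make the internal far parts ``harmless''. For a fixed generation-$r$ cube $R$ the corner computation gives $\|\chi_R\widetilde D_r(\Ps_{\widetilde\mu}\chi_R)\|\ge c_0\,\widetilde\theta_r\,\widetilde\mu(R)^{1/2}$ for some $c_0=c_0(n,s)$, whereas the far piece satisfies only $\|\chi_R\widetilde D_r(\Ps_{\widetilde\mu}\chi_{\mathbb R^{n+1}\setminus R})\|\le C'K\,\widetilde\theta_r\,\widetilde\mu(R)^{1/2}$. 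You have no control over $K$ (for $d=2$ and $\tau_0$ near $1/2$ one has $K\approx4$), so there is no reason $c_0>C'K$, and the subtraction can annihilate your lower bound. This is precisely why the hypothesis of the lemma involves a \emph{small} constant $C_6$ rather than a generic one.

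The paper avoids this entirely by not separating the scales. It evaluates the full telescoped function $f=\sum_{i=j}^{j+h}D_i\Ps\mu_k$ pointwise on a \emph{single} generation-$(j{+}h{+}1)$ cube $P$ at the upper-right corner of $Q$: for $\ox\in P$ one has $f(\ox)=S_P\Ps_{\mu_k}\chi_{Q\setminus P}(\ox)+\big(S_P-S_Q\big)\Ps_{\mu_k}\chi_{\mathbb R^{n+1}\setminus Q}(\ox)$. The corner computation, run through the chain $P=\Delta_{h+1}\subset\cdots\subset\Delta_0=Q$, then accumulates \emph{all} intermediate scales at once and gives $|S_P\Ps_{\mu_k}\chi_{Q\setminus P}|\gtrsim C_9^{-1}\sum_{i=0}^h\theta_{j+i}$, while the only remaining far contribution is the one outside $Q$, which the hypothesis bounds by $C_6\sum_i\theta_{j+i}$ with $C_6$ \emph{chosen} small enough (namely $C_6\le C_9^{-1}C_8^{-1}/2$). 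Integrating over the single cube $P$ and summing over $Q\in\QQ^j$ yields the claim. Your rescaled picture would work if you applied this same single-corner argument to $\widetilde S_{h+1}\Ps\widetilde\mu$ on the corner cube $\widetilde P\in\widetilde\QQ^{h+1}$ (there is then no internal far part at all, since $\widetilde Q^0$ is the whole space for $\widetilde\mu$); but that is exactly the paper's computation with an extra change of variables, not the scale-by-scale argument you outlined.
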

	\begin{proof}
		Let $f:= \sum_{i=j}^{j+h} D_i\Ps\mu_k$. Take $P\in \QQ^{j+h+1}$ and $Q\in \QQ^j$ containing $P$. Then, for $\ox\in P$ we have $f(\ox)=S_P\Ps\mu_k(\ox)-S_Q\Ps\mu_k(\ox)$. By Lemma \ref{lem1.2} we have
		\begin{equation}
        \label{eq3.3}
			f(\ox)=S_P\Ps_{\mu_k}\chi_{Q\setminus{P}}(\ox)+S_P\Ps_{\mu_k}\chi_{\mathbb{R}^{n+1}\setminus{Q}}(\ox)-S_Q\Ps_{\mu_k}\chi_{\mathbb{R}^{n+1}\setminus{Q}}(\ox).
		\end{equation}
		Proceeding as in the proof of Lemma \ref{lem4.1.2} we get
		\begin{equation*}
			\big\rvert S_P\Ps_{\mu_k}\chi_{\mathbb{R}^{n+1}\setminus{Q}}(\ox)-S_Q\Ps_{\mu_k}\chi_{\mathbb{R}^{n+1}\setminus{Q}}(\ox)\big\rvert \lesssim C_8\frac{\ell_j}{\ell_{j-1}}p_{j-1}, \qquad \text{where $C_8:=(d+1)^hd^{hn}$}.
		\end{equation*}
		Assume that $P$ is an $s$-parabolic cube of the $(j+h+1)$-generation sharing the upper right-most corner of $Q$. In this setting, we write
		\begin{align*}
			\bigg\rvert \frac{1}{\mu_k(P)}\int_P &\Ps_{\mu_k}\chi_{Q\setminus{P}}\dd\mu_k \bigg\rvert \\
			& \gtrsim \frac{1}{\mu_k(P)}\bigg\rvert \int_P \int_{Q\setminus{P}} \frac{(x_1-y_1)(t-u)}{|(x,t)-(y,u)|_{p_s}^{n+2s+2}}\chi_{t-u>0} \dd\mu_k(y,u) \dd\mu_k(x,t) \bigg\rvert
		\end{align*}
		In $Q\setminus{P}$ there are cubes of $\QQ^{j+h+1}$ whose centers share the first spatial component of the center of $P$. We name this family $F_1$. By spatial anti-symmetry, the above double integral is null when the domain of integration of the inner integral is precisely $F_1$. Indeed, if we denote by $\overline{c}_{P}=(c_{P_1},\ldots,c_{P_n},c_{P_t})$ the center of $P$ and write
        \begin{equation*}
            P_{>}:=\{(x_1,\ldots,x_n,t)\in P\,:\, y_1>c_{P_1} \}, \qquad P_{<}:=P\setminus{P_{>}},
        \end{equation*}
        its corresponding halves with respect to the first coordinate, the above double integral over $F_1$ can be rewritten as
        \begin{align*}
            \int_{P_{>}} \int_{F_1} &\frac{(x_1-y_1)(t-u)}{|(x,t)-(y,u)|_{p_s}^{n+2s+2}}\chi_{t-u>0} \dd\mu_k(y,u) \dd\mu_k(x,t)\\
            &+\int_{P_{<}} \int_{F_1} \frac{(x_1-y_1)(t-u)}{|(x,t)-(y,u)|_{p_s}^{n+2s+2}}\chi_{t-u>0} \dd\mu_k(y,u) \dd\mu_k(x,t).
        \end{align*}
        For the second integral, we consider the isometric change of variables
        \begin{equation*}
            \pazocal{R}(x_1,\ldots,x_n,t,y_1,\ldots,y_n,u)=(2c_{P_1}-x_1,x_2,\ldots,x_n,t,2c_{P_1}-y_1,y_2,\ldots,y_n,u),
        \end{equation*}
        that is nothing but a reflection with respect to the first variable centered at the center of $P$ in both the outer and inner domains of integration. Notice that $c_{P_1}$ is also the first coordinate of all the cubes conforming $F_1$, by construction. It is clear then, that $\pazocal{R}(P_{<})=P_{>}$ and $\pazocal{R}(F_1)=F_1$. Therefore,\medskip\\
        \begin{align*}
            \int_{P_{<}} \int_{F_1} &\frac{(x_1-y_1)(t-u)}{|(x,t)-(y,u)|_{p_s}^{n+2s+2}}\chi_{t-u>0} \dd\mu_k(y,u) \dd\mu_k(x,t)\\
            &=\int_{P_{>}} \int_{F_1} \frac{(2c_{P_1}-x_1-2c_{P_1}+y_1)(t-u)\chi_{t-u>0}}{|(2c_{P_1}-x_1,\ldots,x_n,t)-(2c_{P_1}-y_1,\ldots,y_n,u)|_{p_s}^{n+2s+2}} \dd\mu_k(y,u) \dd\mu_k(x,t)\\
            &=-\int_{P_{>}} \int_{F_1} \frac{(x_1-y_1)(t-u)}{|(x,t)-(y,u)|_{p_s}^{n+2s+2}}\chi_{t-u>0} \dd\mu_k(y,u) \dd\mu_k(x,t),
        \end{align*}
        and from this we conclude
        \begin{align*}
            \int_P \int_{F_1} \frac{(x_1-y_1)(t-u)}{|(x,t)-(y,u)|_{p_s}^{n+2s+2}}\chi_{t-u>0} \dd\mu_k(y,u) \dd\mu_k(x,t)=0,
        \end{align*}
        and the claim follows. So we are left to study the previous integral in the remaining inner domain of integration $Q\setminus{(P\cup F_1)}$. In the latter, by the choice of $P$, the integrand is positive and thus it can be bounded as follows: name
        \begin{equation*}
            P=:\Delta_{h+1}\subset \Delta_{h}\subset \cdots \Delta_1 \subset \Delta_{0}=:Q,
        \end{equation*}
        the unique chain of $s$-parabolic cubes with $\Delta_r\in \QQ^{j+r}$ passing form $P$ to $Q$. Then,
        \begin{align*}
			\bigg\rvert \frac{1}{\mu_k(P)}&\int_P \Ps_{\mu_k}\chi_{Q\setminus{P}}\dd\mu_k \bigg\rvert \\
			& \gtrsim \frac{1}{\mu_k(P)}\sum_{i=1}^{h+1}\int_{P}\int_{(\Delta_{i-1}\setminus{\Delta_i})\setminus{F_1}} \frac{(x_1-y_1)(t-u)}{|(x,t)-(y,u)|_{p_s}^{n+2s+2}}\chi_{t-u>0} \dd\mu_k(y,u) \dd\mu_k(x,t)\\
            &\gtrsim \frac{1}{\mu_k(P)}\sum_{i=1}^{h+1} \frac{\ell_{i+j}^{1+2s}}{\ell_{i+j}^{n+2s+2}} \mu_k(\Delta_{i-1})\mu_k(P)=\frac{C(s)}{(d+1)^hd^{hn}}\sum_{i=0}^{h} \theta_{i+j,p_s}=:C_9^{-1}\sum_{i=0}^{h} \theta_{i+j,p_s}.
		\end{align*}
        Then, returning to \eqref{eq3.3},
        \begin{equation*}
            |f(\ox)|\geq C_9^{-1}\big( \theta_{j,p_s}+\theta_{j+1,p_s}+\cdots+\theta_{j+h,p_s} \big)-C_8\frac{\ell_j}{\ell_{j-1}}p_{j-1},
        \end{equation*}
        for $\ox\in P \in  \QQ^{j+h+1}$, with $P$ sharing the upper right-most corner of $Q$. Now pick $C_6\leq C_9^{-1}C_8^{-1}/2$ so that:
        \begin{align*}
            \|\chi_Qf\|^2=\int_{Q}\big\rvert S_{j+h}\Ps\mu_k &- S_Q\Ps\mu_k \big\rvert^2\dd\mu_k\\
            &\geq C^{-1}(s,h)\bigg( \sum_{i=0}^{h} \theta_{i+j,p_s} \bigg)^2 \frac{1}{(d+1)^{j+h+1}d^{n(j+h+1)}},
        \end{align*}
        and summing over all cubes $Q\in \QQ^j$ we get the result.
	\end{proof}

    \begin{lem}
    \label{lem4.7}
        Let $A,c_0$ be positive constants and $r,q\in [0,k-1]$ integers such that $q\leq r$, $\frac{\ell_q}{\ell_{q-1}}\leq c_0\theta_{q,p_s}$ and, for all $j$ with $q\leq j \leq r$,
        \begin{equation*}
            A^{-1}\theta_{q,p_s}\leq \theta_{j,p_s}\leq A\theta_{q,p_s}.
        \end{equation*}
        There exists $N_1(n,s,c_0,A)$ such that if $|q-r|>N_1$, then
        \begin{equation*}
            \sum_{j=q}^r \|D_j\Ps\mu_k\|^2 \geq C|q-r|\theta_{q,p_s}^2, \qquad \text{where $C(n,s,c_0,A)$}.
        \end{equation*}
    \end{lem}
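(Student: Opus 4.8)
The plan is to reduce the statement to a repeated application of Lemma \ref{lem4.6} over the interval $[q,r]$, partitioned into consecutive blocks of a fixed length $h=h(n,s,c_0,A)$. The key point is that the hypothesis $\frac{\ell_q}{\ell_{q-1}}\leq c_0\theta_{q,p_s}$, together with the comparability $A^{-1}\theta_{q,p_s}\leq\theta_{j,p_s}\leq A\theta_{q,p_s}$ for $q\leq j\leq r$, allows us to verify the hypothesis of Lemma \ref{lem4.6} at \emph{every} scale $j$ in $[q,r-h]$, provided $h$ is chosen large enough. Indeed, recall $p_{j-1}=\sum_{i=0}^{j-1}\theta_{i,p_s}\frac{\ell_{j-1}}{\ell_i}$. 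First I would bound $\frac{\ell_j}{\ell_{j-1}}p_{j-1}$ by splitting the sum defining $p_{j-1}$ at the index $q$: the tail $\sum_{i=q}^{j-1}$ is controlled by $A\theta_{q,p_s}\sum_{i=q}^{j-1}\frac{\ell_{j-1}}{\ell_i}\lesssim A\theta_{q,p_s}$ since $\ell_{j-1}/\ell_i\leq d^{-(j-1-i)}$ (here $\lambda_i\le\tau_0<1/d$), while the head $\sum_{i=0}^{q-1}\theta_{i,p_s}\frac{\ell_{j-1}}{\ell_i}=\frac{\ell_{j-1}}{\ell_{q-1}}\,p_{q-1}\leq \frac{\ell_{j-1}}{\ell_{q-1}}\,\frac{d}{d-1}\,c_0\theta_{q,p_s}\cdot\frac{\ell_{q-1}}{\ell_q}$ — no, more carefully, one uses $\frac{\ell_{j}}{\ell_{q-1}}=\frac{\ell_j}{\ell_q}\cdot\frac{\ell_q}{\ell_{q-1}}\le d^{-(j-q)}c_0\theta_{q,p_s}$, so the head contribution to $\frac{\ell_j}{\ell_{j-1}}p_{j-1}$ is at most a geometric factor times $c_0\theta_{q,p_s}$. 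Combining, $\frac{\ell_j}{\ell_{j-1}}p_{j-1}\leq C_{10}(n,s,c_0,A)\,\theta_{q,p_s}\leq C_{10}A\,\theta_{j,p_s}\leq C_{10}A(h+1)\max_{0\le i\le h}\theta_{j+i,p_s}$, which is exactly the hypothesis of Lemma \ref{lem4.6} with $h$ arbitrary and $C_6=C_6(n,s,h,c_0,A)$.

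Next I would fix $h$: Lemma \ref{lem4.6} then yields, for each such $j$,
\begin{equation*}
\sum_{i=j}^{j+h}\|D_i\Ps\mu_k\|^2\geq C_7^{-1}\frac{1}{(d+1)^hd^{hn}}\big(\theta_{j,p_s}+\cdots+\theta_{j+h,p_s}\big)^2\geq C_7^{-1}\frac{(h+1)^2}{(d+1)^hd^{hn}}\,A^{-2}\theta_{q,p_s}^2,
\end{equation*}
using $\theta_{j+i,p_s}\geq A^{-1}\theta_{q,p_s}$. Now partition $[q,r]$ into $\big\lfloor |q-r|/(h+1)\big\rfloor$ disjoint consecutive blocks of length $h+1$ (discarding at most $h$ leftover scales at the top). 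Summing the above estimate over these blocks — the blocks are disjoint, so the $D_i$'s appearing are distinct and we are simply adding a subset of the terms $\sum_{j=q}^r\|D_j\Ps\mu_k\|^2$ — gives
\begin{equation*}
\sum_{j=q}^{r}\|D_j\Ps\mu_k\|^2\;\geq\;\Big\lfloor\tfrac{|q-r|}{h+1}\Big\rfloor\cdot C_7^{-1}A^{-2}\frac{(h+1)^2}{(d+1)^hd^{hn}}\,\theta_{q,p_s}^2\;\geq\;C\,|q-r|\,\theta_{q,p_s}^2,
\end{equation*}
where $C=C(n,s,c_0,A)$, valid as soon as $|q-r|>N_1:=2(h+1)$ so that $\lfloor|q-r|/(h+1)\rfloor\geq |q-r|/(2(h+1))$.

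The main obstacle is the bookkeeping in the first step: one must show that the single hypothesis at scale $q$, namely $\ell_q/\ell_{q-1}\le c_0\theta_{q,p_s}$, \emph{propagates} to a bound on $p_{j-1}$ for all larger $j$ in the block, and that the constant $C_6$ produced does not blow up with $|q-r|$ (it may depend on $h$, which is fixed, but not on $r$). This is where the geometric decay $\ell_j/\ell_i\le d^{-(j-i)}$ and the two-sided comparability of the $\theta$'s are both essential. A secondary subtlety is ensuring the blocks used in Lemma \ref{lem4.6} stay within $[q,r]$: since each block needs scales up to $j+h+1$, one should only start blocks at $j\le r-h-1$, which is harmless after adjusting $N_1$. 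Everything else is a routine sum of a geometric series and the orthogonality of the $D_j\Ps\mu_k$ already recorded before Lemma \ref{lem2.2}.
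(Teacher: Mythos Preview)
Your proposal is correct and follows essentially the same route as the paper: partition $[q,r]$ into consecutive blocks of a fixed length, bound $\frac{\ell_j}{\ell_{j-1}}p_{j-1}$ at the start of each block by splitting the defining sum at index $q$ (geometric tail from the comparability of the $\theta$'s, plus the hypothesis at scale $q$ for the head), apply Lemma~\ref{lem4.6} on each block, and sum using orthogonality of the $D_i$. One clarification: the constant $C_6$ in Lemma~\ref{lem4.6} is determined by $(n,s,h)$ and is \emph{not} a free parameter you may let depend on $c_0,A$; the paper (and implicitly your plan) handles this by choosing the block length $M_0=h+1$ large enough that $C_6\cdot M_0A^{-1}\geq 2A+c_0$, which is precisely why a threshold $N_1$ is needed---also note that the stated hypothesis should read $\frac{\ell_q}{\ell_{q-1}}p_{q-1}\le c_0\theta_{q,p_s}$ (equivalently $p_q\le(c_0+1)\theta_{q,p_s}$), as the paper itself uses in both the proof and the application in Lemma~\ref{lem4.8}, which resolves the wobble in your head-term estimate.
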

    \begin{proof}
        Set $f:=\sum_{j=q}^r D_j\Ps\mu_k$. We have to show that
        \begin{equation*}
            \|f\|^2 \geq C|q-r|\theta_{q,p_s}^2.
        \end{equation*}
        Let $M_0$ be some positive integer depending on $n,s,c_0$ and $A$ to be fixed below. We decompose $f$ as follows
        \begin{equation*}
            f = \sum_{j=q}^{q+tM_0-1} D_j\Ps\mu_k + \sum_{j=q+tM_0}^r D_j\Ps\mu_k,
        \end{equation*}
        where $t$ is the biggest integer with $q+tM_0-1\leq r$. Assuming $N_1$ big enough (take $N_1>2M_0-1$), we have $|q-r|<M_0t\leq 2|q-r|$. For the first sum on the right side, write
        \begin{equation*}
            \sum_{j=q}^{q+tM_0-1} D_j\Ps\mu_k = \sum_{l=0}^{t-1} \sum_{j=q+lM_0}^{q+(l+1)M_0-1} D_j\Ps\mu_k=:\sum_{l=0}^{t-1}U_l(\mu_k).
        \end{equation*}
        By orthogonality we have
        \begin{equation*}
            \|f\|^2\geq \sum_{l=0}^{t-1} \|U_l(\mu_k)\|.
        \end{equation*}
        We shall show that if the parameter $M_0(n,s,c_0,A)$ is chosen big enough,
        \begin{equation}
        \label{eq4.4}
            \|U_l(\mu_k)\|^2\geq C(n,s,c_0,A)\theta_{q,p_s}^2, \qquad \text{for all $0\leq l \leq t-1$},
        \end{equation}
        and then $\|f\|^2\geq C|q-r|\theta_{q,p_s}^2$, and we would be done. To prove \eqref{eq4.4}, we intend to apply Lemma \ref{lem4.6}. Observe that
        \begin{align*}
            p_{q+lM_0-1} &= \sum_{i=0}^{q+lM_0-1} \theta_{i,p_s}\frac{\ell_{q+lM_0-1}}{\ell_i} = \sum_{i=q}^{q+lM_0-1} \theta_{i,p_s}\frac{\ell_{q+lM_0-1}}{\ell_i} + \sum_{i=0}^{q-1} \theta_{i,p_s}\frac{\ell_{q+lM_0-1}}{\ell_i}\\
            &=\sum_{i=q}^{q+lM_0-1} \theta_{i,p_s}\frac{\ell_{q+lM_0-1}}{\ell_i}+\frac{\ell_{q+lM_0-1}}{\ell_{q-1}}p_{q-1} \leq A\frac{d}{d-1}\theta_{q,p_s}+\frac{\ell_{q+lM_0-1}}{\ell_{q-1}}p_{q-1}.
        \end{align*}
        Then,
        \begin{align*}
            \frac{\ell_{q+lM_0}}{\ell_{q+lM_0-1}}p_{q+lM_0-1} &\leq A\frac{d}{d-1}\theta_{q,p_s}+\frac{\ell_{q+lM_0}}{\ell_{q-1}}p_{q-1}\leq 2A\theta_{q,p_s}+\frac{\ell_q}{\ell_{q+1}}p_{q-1}\\
            &\leq (2A+c_0)\theta_{q,p_s}.
        \end{align*}
        On the other hand,
        \begin{equation*}
            \sum_{j=q+lM_0}^{q+(l+1)M_0-1}\theta_{j,p_s} \geq M_0A^{-1}\theta_{q,p_s}.
        \end{equation*}
        Then, if $M_0$ is big enough, $2A+c_0\leq C_6M_0A^{-1}$, and so we are able to apply Lemma \ref{lem4.6} with $j:=q+lM_0$ and $h:=M_0-1$, so that
        \begin{align*}
            \|U_l(\mu_k)\|^2 \geq C_7^{-1}\frac{1}{(d+1)^{M_0-1}d^{(M_0-1)n}}\bigg( \sum_{j=q+lM_0}^{q+(l+1)M_0-1}\theta_{j,p_s} \bigg)^2 \geq  C_7^{-1}\frac{ M_0^2A^{-2}}{(d+1)^{M_0}d^{M_0n}}\theta_{q,p_s}^2,
        \end{align*}
        and \eqref{eq4.4} follows.
    \end{proof}
    Now we are able to provide the following estimate for intervals which are long and good:
    \begin{lem}
        \label{lem4.8}
        Suppose that the constant $N_L$ is chosen big enough (depending on $B$). If $I_j$ is long and good, then
        \begin{equation*}
            \sigma(I_j) \leq C(B)\|T_j\mu_k\|^2,
        \end{equation*}
        where recall that $T_j\mu_k := \sum_{s_j\leq i < s_{j+1}} D_i\Ps\mu_k$.
    \end{lem}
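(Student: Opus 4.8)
The plan is to locate, inside the long and good interval $I_j=[s_j,s_{j+1})$, a block of consecutive scales whose length is still comparable to $|I_j|$ and on which Lemma~\ref{lem4.7} applies, and then to push the lower bound coming from that lemma back to $\sigma(I_j)$ via the $L^2(\mu_k)$-orthogonality of the functions $D_i\Ps\mu_k$. First I would set $j_0:=\min(I_j\cap\pazocal{G})$, which is well defined since $I_j$ is good and hence $I_j\cap\pazocal{G}\neq\varnothing$, and apply Lemma~\ref{lem4.5} to obtain $j_0-s_j\leq\tfrac{400B^4}{400B^4+1}(s_{j+1}-s_j)$, equivalently $s_{j+1}-j_0\geq(400B^4+1)^{-1}|I_j|$. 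So the scales $j_0,j_0+1,\dots,s_{j+1}-1$ form an interval of length $\gtrsim_B|I_j|$.

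Next I would verify the hypotheses of Lemma~\ref{lem4.7} with $q:=j_0$ and $r:=s_{j+1}-1$. Both indices lie in $[0,k-1]$ and $q\leq r$. Since by construction $s_{j+1}$ is the first index past $s_j$ at which $\theta_{\cdot,p_s}$ leaves the range $[B^{-1}\theta_{s_j,p_s},B\theta_{s_j,p_s}]$, every $i$ with $q\leq i\leq r$ satisfies $B^{-2}\theta_{q,p_s}\leq\theta_{i,p_s}\leq B^{2}\theta_{q,p_s}$, so the comparability hypothesis holds with $A:=B^{2}$. The scale-ratio hypothesis at $q=j_0$ follows from $j_0\in\pazocal{G}$: writing $p_{j_0}=\tfrac{\ell_{j_0}}{\ell_{j_0-1}}p_{j_0-1}+\theta_{j_0,p_s}$ and using $p_{j_0}\leq 40\,\theta_{j_0,p_s}$ gives $\tfrac{\ell_{j_0}}{\ell_{j_0-1}}p_{j_0-1}\leq 39\,\theta_{j_0,p_s}$, i.e. the required inequality with an absolute $c_0$. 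Finally, Lemma~\ref{lem4.7} demands $|q-r|=s_{j+1}-1-j_0>N_1(n,s,c_0,A)$; as $c_0$ and $A$ depend only on $B$, so does $N_1$, and since $s_{j+1}-1-j_0\geq(400B^4+1)^{-1}|I_j|-1\geq(400B^4+1)^{-1}N_L-1$ (using that $I_j$ is long), it suffices to fix $N_L$ large enough, in terms of $B$ and hence of $N_1$, that $(400B^4+1)^{-1}N_L-1>N_1$. This is precisely where the hypothesis ``$N_L$ chosen big enough depending on $B$'' is used.

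Granting these, Lemma~\ref{lem4.7} yields $\sum_{i=j_0}^{s_{j+1}-1}\|D_i\Ps\mu_k\|^2\geq C(n,s,c_0,A)\,(s_{j+1}-1-j_0)\,\theta_{j_0,p_s}^2$, and by the orthogonality of $\{D_i\Ps\mu_k\}$ (Lemma~\ref{lem1.3}) the left-hand side is at most $\|T_j\mu_k\|^2$. Combining $s_{j+1}-1-j_0\gtrsim_B|I_j|$ with the comparability $\theta_{i,p_s}\leq B^{2}\theta_{j_0,p_s}$ on $I_j$, which gives $\sigma(I_j)=\sum_{i\in I_j}\theta_{i,p_s}^2\leq B^{4}|I_j|\,\theta_{j_0,p_s}^2$, one obtains $\|T_j\mu_k\|^2\gtrsim_B|I_j|\,\theta_{j_0,p_s}^2\gtrsim_B\sigma(I_j)$, which is the assertion. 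No new estimate on the kernel $\nabla_xP_s$ is needed at this stage — the kernel-dependent antisymmetry arguments were already carried out in Lemmas~\ref{lem4.6} and~\ref{lem4.7} — so the only real obstacle is the bookkeeping of constants, in particular respecting the order of the quantifiers: $B$ is fixed first, then $N_1$ and the constant $C(B)$ are produced by the previous lemmas, and only afterwards is $N_L$ chosen.
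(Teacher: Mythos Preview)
Your proof is correct and follows essentially the same approach as the paper: locate $j_0=\min(I_j\cap\pazocal{G})$, use Lemma~\ref{lem4.5} to ensure $[j_0,s_{j+1}-1]$ has length $\gtrsim_B|I_j|$, apply Lemma~\ref{lem4.7} on that block, and conclude by orthogonality. Your bookkeeping is in fact slightly more careful than the paper's --- you correctly take $A=B^2$ when comparing to $\theta_{j_0,p_s}$ (the paper writes $A=B$, which is a small slip since the comparability in Lemma~\ref{lem4.7} is against $\theta_{q,p_s}=\theta_{j_0,p_s}$, not $\theta_{s_j,p_s}$), and your derivation of the $c_0$-condition from $j_0\in\pazocal{G}$ is clean.
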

    \begin{proof}
        Set $\ell:=s_{j+1}-s_j$. Notice that by the definition of $s_{j+1}$,
        \begin{equation*}
            \sigma(I_j)=\sum_{s_j\leq i < s_j} \theta_{j,p_s}^2\leq \ell B^2 \theta_{s_j,p_s}^2.
        \end{equation*}
        If $j_0:=\min (I_j\cap \pazocal{G})$ and we take $N_l \gg 400B^4+1$, by Lemma \ref{lem4.5},
        \begin{equation*}
            s_j-j_0 = \ell-(j_0-s_j)\geq \bigg( 1- \frac{400B^4}{400B^4+1}\bigg)\ell = \frac{1}{400B^4+1}\ell \gg 1.
        \end{equation*}
        We write
        \begin{equation*}
            T_j\mu_k = \sum_{i=s_j}^{j_0-1} D_i\Ps\mu_k + \sum_{i=j_0}^{s_{j+1}-1} D_i \Ps\mu_k,
        \end{equation*}
        and notice that condition $\frac{\ell_q}{\ell_{q-1}}p_{q-1}\leq c_0\theta_{q,p_s}$ is rewritten simply as $p_q\leq (c_0+1)\theta_{q,p_s}$. We apply \ref{lem4.7} with $A=B, q=j_0, r=s_{j+1}-1$ and $c_0=40$, so that
        \begin{equation*}
            \sum_{i=j_0}^{s_{j+1}-1}\|D_i\Ps\mu_k\|\geq \frac{1}{C'(B)}|s_{j+1}-j_0|\theta_{s_j,p_s}^2.
        \end{equation*}
        By orthogonality,
        \begin{align*}
            \|T_j\mu_k\|^2&\geq \frac{1}{C'(B)}|s_{j+1}-j_0|\theta_{s_j,p_s}^2\geq \frac{\ell}{C'(B)(400B^4+1)}\theta_{s_j,p_s}^2\\
            &\geq \frac{1}{C'(B)(400B^4+1)}\sigma(I_j)=:C^{-1}(B)\sigma(I_j),
        \end{align*}
        and we are done.
    \end{proof}
    By Lemmas \ref{lem4.4} and \ref{lem4.8}, to prove the desired bound
    \begin{equation*}
        \sigma([0,k-1])\lesssim \sum_{j=0}^{k-1}\|D_j\Ps\mu_k\|^2= \sum_{i=0}^{m-1}\|T_i\mu_k\|^2,
    \end{equation*}
    we are only left to check
    \begin{equation*}
        \sum_{I_j\, \text{short good}} \sigma(I_j) \lesssim \sum_{j=0}^{k-1}\|D_j\Ps\mu_k\|^2.
    \end{equation*}
    To do so, now we only need to follow the exact same arguments to those in \cite[\textsection 5.7, \textsection 5.8, \textsection 5.9]{T2}, which do not depend on the nature of the convolution kernel nor the change in the geometry of our particular Cantor set. To be more precise, one needs to distinguish three types of intervals $I_j$ depending on the three conditions a), b) and c) used to define Stop. We say that
	\begin{itemize}
		\item $I_j$ is \textit{terminal} if $s_j$ satisfies a), so that $j+1=m$.
		\item $I_j$ has \textit{increasing density} ($I_j\in ID$) if $s_j$ satisfies b) and not a).
		\item $I_j$ has \textit{decreasing density} ($I_j\in DD$) if $s_j$ satisfies c) and not a).
	\end{itemize}
    By means of these notions, one is able to estimate $\sigma(I_j)$ for intervals which are short and good and complete the proof of Lemma \ref{lem2.2}.

    \section{The upper bound for the capacity}
    \label{sec6}
    Let us begin by recalling an alternative definition of the capacity $\Gamma_{\Theta^s,+}$. Fix $s\in(1/2,1]$ and denote by $\Sigma_{n+1}^s(E)$ the collection of positive Borel measures supported on $E$ with upper $s$-parabolic growth of degree $n+1$ with constant 1. By means of \cite[Lemma 4.2]{HMPr}, we shall redefine $\Gamma_{\Theta^s,+}$ simply as follows
    \begin{equation*}
    	\Gamma_{\Theta^s,+}(E):=\sup \big\{ \mu(E)\,:\, \mu\in \Sigma_{n+1}^s(E), \; \|\pazocal{P}^{s}\mu\|_\infty \leq 1 \big\}.
    \end{equation*}
    For a fixed generation $0\leq j \leq k$ of the Cantor set, we define the following auxiliary capacity just for $E_{j,p_s}$,
    \begin{equation*}
        \Gamma_{j}(E_{j,p_s}):=\text{sup}\big\{ \alpha>0\,:\, \|\Ps_{\alpha\mu_j}\|_{L^2(\alpha\mu_j)\to L^2(\alpha\mu_j)}\leq 1 \big\},
    \end{equation*}
    where $\mu_j:=|E_{j,p_s}|^{-1}\pazocal{L}^{n+1}|_{E_{j,p_s}}$.

    \begin{lem}
        \label{lem5.1}
        There exists $C(n,s)>0$ such that for all $0\leq j \leq k$,
        \begin{equation*}
            C^{-1}\bigg( \sum_{i=0}^{j} \theta_{i,p_s}^2 \bigg)^{-1/2}\leq \Gamma_{j}(E_{j,p_s})\leq C\bigg( \sum_{i=0}^{j} \theta_{i,p_s}^2 \bigg)^{-1/2}.
        \end{equation*}
    \end{lem}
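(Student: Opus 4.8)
The plan is to identify $\Gamma_{j}(E_{j,p_s})$ with the reciprocal of the $L^2(\mu_j)$-operator norm of $\Ps_{\mu_j}$, and then to estimate that norm from both sides: below by means of the lower $L^2$-estimate of Section~\ref{sec5} and above by means of the $T1$-machinery of Section~\ref{sec4}.

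First, the scaling step. For every $\varepsilon>0$ and $\alpha>0$ one has $\Ps_{\alpha\mu_j,\varepsilon}=\alpha\,\Ps_{\mu_j,\varepsilon}$, while $\|g\|_{L^2(\alpha\mu_j)}=\alpha^{1/2}\|g\|_{L^2(\mu_j)}$; hence $\|\Ps_{\alpha\mu_j}\|_{L^2(\alpha\mu_j)\to L^2(\alpha\mu_j)}=\alpha\,\|\Ps_{\mu_j}\|_{L^2(\mu_j)\to L^2(\mu_j)}$. By Theorem~\ref{thm4.1} (applied with the fixed generation taken to be $j$) this last norm is $\gtrsim 1>0$, so the definition of $\Gamma_j$ gives
\[
\Gamma_{j}(E_{j,p_s})=\big\|\Ps_{\mu_j}\big\|_{L^2(\mu_j)\to L^2(\mu_j)}^{-1},
\]
and it suffices to show $\|\Ps_{\mu_j}\|_{L^2(\mu_j)\to L^2(\mu_j)}\approx_{n,s}\big(\sum_{i=0}^{j}\theta_{i,p_s}^2\big)^{1/2}$. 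The lower bound for this norm is immediate: Theorem~\ref{thm4.1} with fixed generation $j$ gives $\|\Ps_{\mu_j}1\|_{L^2(\mu_j)}^2\gtrsim_{n,s}\sum_{i=0}^{j}\theta_{i,p_s}^2$, and since $\mu_j$ is a probability measure $\|1\|_{L^2(\mu_j)}=1$, so testing the operator against the constant function $1$ yields $\|\Ps_{\mu_j}\|_{L^2(\mu_j)\to L^2(\mu_j)}\ge\|\Ps_{\mu_j}1\|_{L^2(\mu_j)}\gtrsim_{n,s}\big(\sum_{i=0}^{j}\theta_{i,p_s}^2\big)^{1/2}$, which is exactly $\Gamma_{j}(E_{j,p_s})\lesssim_{n,s}\big(\sum_{i=0}^{j}\theta_{i,p_s}^2\big)^{-1/2}$.

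For the upper bound on the norm I would rerun the argument behind \eqref{eq3.7}, but track the normalizations so that the final constant depends only on $n$ and $s$. Let $C_0$ be the upper $s$-parabolic growth constant of $\mu_j$; as computed in Section~\ref{sec2} one has $C_0\lesssim_{n,s}\kappa_j:=\max_{0\le i\le j}\theta_{i,p_s}$, and since $\theta_{0,p_s}=1$ it follows that $\kappa_j\le\big(\sum_{i=0}^{j}\theta_{i,p_s}^2\big)^{1/2}$. Put $\widetilde\mu_j:=C_0^{-1}\mu_j$, which has growth constant $1$ and, by Lemma~\ref{lem1.2} together with scaling, still satisfies $\langle\chi_R,\Ps_{\widetilde\mu_j}\chi_R\rangle=0$ for every $s$-parabolic cube $R$. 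Applying Theorem~\ref{thm4.1.8} to $\mu_j$ and rescaling gives $\|\Ps_{\widetilde\mu_j}\chi_Q\|_{L^2(\widetilde\mu_j|_Q)}\,\widetilde\mu_j(Q)^{-1/2}\lesssim_{n,s}C_0^{-1}\big(\sum_{i=0}^{j}\theta_{i,p_s}^2\big)^{1/2}$ for any cube $Q$; then Lemma~\ref{lem4.1.11} yields $\|\Ps\widetilde\mu_j\|_{\text{BMO}_{3,p_s}(\widetilde\mu_j)}+\|\pazocal{P}^{s,\ast}\widetilde\mu_j\|_{\text{BMO}_{3,p_s}(\widetilde\mu_j)}\lesssim_{n,s}1+C_0^{-1}\big(\sum_{i=0}^{j}\theta_{i,p_s}^2\big)^{1/2}$, and finally the $T1$ theorem \cite[Theorem 2.3]{HyMa} (whose weak-boundedness hypothesis is void here, again by Lemma~\ref{lem1.2}) gives $\|\Ps_{\widetilde\mu_j}\|_{L^2(\widetilde\mu_j)\to L^2(\widetilde\mu_j)}\lesssim_{n,s}1+C_0^{-1}\big(\sum_{i=0}^{j}\theta_{i,p_s}^2\big)^{1/2}$. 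Multiplying through by $C_0$, using the scaling relation once more and $C_0\lesssim_{n,s}\kappa_j\le\big(\sum_{i=0}^{j}\theta_{i,p_s}^2\big)^{1/2}$, we conclude $\|\Ps_{\mu_j}\|_{L^2(\mu_j)\to L^2(\mu_j)}\lesssim_{n,s}\big(\sum_{i=0}^{j}\theta_{i,p_s}^2\big)^{1/2}$, i.e. $\Gamma_{j}(E_{j,p_s})\gtrsim_{n,s}\big(\sum_{i=0}^{j}\theta_{i,p_s}^2\big)^{-1/2}$. I expect the only genuinely delicate point to be precisely this last bookkeeping: the $T1$ package \eqref{eq3.7} carries an a priori dependence on an upper bound $\kappa$ for the densities $\theta_{i,p_s}$, and the argument goes through only because the growth constant of $\mu_j$ is itself controlled by $\big(\sum_{i=0}^{j}\theta_{i,p_s}^2\big)^{1/2}$, which already appears on the right-hand side and thus absorbs that dependence.
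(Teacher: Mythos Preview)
Your approach is essentially the paper's: both directions reduce to estimating $\|\Ps_{\mu_j}\|_{L^2(\mu_j)\to L^2(\mu_j)}$, with Theorem~\ref{thm4.1} giving the lower bound on the norm and the $T1$ package behind \eqref{eq3.7} giving the upper bound. Your explicit scaling identity $\Gamma_j(E_{j,p_s})=\|\Ps_{\mu_j}\|_{L^2(\mu_j)\to L^2(\mu_j)}^{-1}$ is a cleaner way of saying what the paper does with $\alpha$'s.

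Where you add something is in the bookkeeping for the upper bound on the norm. The paper simply cites \eqref{eq3.7}, whose constant is stated as $C(n,s,\tau_0,\kappa)$; since Lemma~\ref{lem5.1} is later applied inside an induction (Section~\ref{sec6}) where no a~priori bound on the densities is available, that $\kappa$-dependence is not innocuous. Your normalization $\widetilde\mu_j=C_0^{-1}\mu_j$ and the observation $C_0\lesssim_{n,s}\max_{0\le i\le j}\theta_{i,p_s}\le\big(\sum_{i=0}^j\theta_{i,p_s}^2\big)^{1/2}$ is exactly the missing step that absorbs $\kappa$ into the right-hand side; this is correct and arguably what the paper intends. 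Note only that a $\tau_0$-dependence survives (it enters through Theorem~\ref{thm4.1.8}), so the final constant is really $C(n,s,\tau_0)$, consistent with the rest of the paper despite the ``$C(n,s)$'' in the lemma's statement.
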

    \begin{proof}
        To simplify notation, in this proof we shall write $\langle f, g \rangle := \int fg \dd\mu_j$. Then, since $\mu_j$ is a probability measure,
        \begin{align*}
            \|\Ps(\alpha\mu_j)\|_{L^2(\alpha\mu_j)}^2=|\langle \Ps(\alpha\mu_j), \Ps(\alpha\mu_j) \rangle |\leq \|\Ps_{\alpha\mu_j}\|_{L^2(\alpha\mu_j)\to L^2(\alpha\mu_j)} \alpha^{1/2} \|\Ps(\alpha\mu_j)\|_{L^2(\alpha\mu_j)}^2,
        \end{align*}
        in other words,
        \begin{equation*}
            \|\Ps(\alpha\mu_j)\|_{L^2(\alpha\mu_j)} \leq \alpha^{1/2}\|\Ps_{\alpha\mu_j}\|_{L^2(\alpha\mu_j)\to L^2(\alpha\mu_j)}.
        \end{equation*}
        This implies
        \begin{align*}
            \|\Ps_{\alpha\mu_j}\|_{L^2(\alpha\mu_j)\to L^2(\alpha\mu_j)} &\geq \alpha^{-1/2}\|\Ps(\alpha\mu_j)\|_{L^2(\alpha\mu_j)} = \alpha\|\Ps\mu_j\|_{L^2(\mu_j)} \geq \alpha C_1^{-1} \bigg( \sum_{i=0}^{j} \theta_{i,p_s}^2 \bigg)^{1/2},
        \end{align*}
        where at the last step we have applied Theorem \ref{thm4.1}. So by definition of $\Gamma_{j}(E_{j,p_s})$ we get the upper bound with constant $C_1$. On the other hand, for any $f,g\in L^2(\mu_j)$, applying relation \eqref{eq3.7} we get
        \begin{align*}
            |\langle \Ps_{\alpha\mu_j}f,g\rangle| \leq \alpha C_2^{-1}\bigg( \sum_{i=0}^{j} \theta_{i,p_s}^2 \bigg)^{1/2}\|f\|_{L^2(\alpha\mu_j)}\|g\|_{L^2(\alpha\mu_j)}.
        \end{align*}
        Therefore,
        \begin{equation*}
            \|\Ps_{\mu_j}\|_{L^2(\mu_j)\to L^2(\mu_j)} \leq \alpha C_2^{-1} \bigg( \sum_{i=0}^{j} \theta_{i,p_s}^2 \bigg)^{1/2},
        \end{equation*}
        and by the definition of $\Gamma_j(E_{j,p_s})$ we get the lower bound with constant $C_2$. Hence, setting $C:=\max{(C_1,C_2)}$ we are done.
    \end{proof}

    As mentioned in the proof of Theorem \ref{thm4.1.12}, by Lemma \ref{lem5.1} and \cite[Theorem 4.3]{H} we get
    \begin{equation*}
        \Gamma_{\Theta^s,+}(E_{k,p_s})\geq \widetilde{\Gamma}_{\Theta^s,+}(E_{k,p_s})\gtrsim \bigg( \sum_{j=0}^{k} \theta_{j,p_s}^2 \bigg)^{-1/2}\approx \Gamma_k(E_{k,p_s}).
    \end{equation*}
    We aim at proving the existence of a constant $C_0(n,s)>0$ such that for all $k=1,2,\ldots$
    \begin{equation}
    \label{eq5.2}
        \Gamma_{\Theta^s,+}(E_{k,p_s})\leq C_0\Gamma_k(E_{k,p_s}).
    \end{equation}
    Before proceeding, let us prove that we can assume three assumptions to simplify our problem without loss of generality. Namely, we will assume the existence of $1 \leq M \leq k$ such that
    \begin{enumerate}
        \item[\textbf{A1}:] $\sigma_{M+1}\leq 2\sigma_M \leq \sigma_k\leq 2\sigma_{M+1}$.
        \item[\textbf{A2}:] $\Gamma_{\Theta^s,+}(E_{j,p_s})\leq C_0 \Gamma_j(E_{j,p_s}), \qquad 0<j<k.$
        \item[\textbf{A3}:] For some constant $A_0(n,s)\geq \sqrt{2}C^2$,
        \begin{equation*}
            \Gamma_{\Theta^s,+}\big( E_{p_s}(\lambda_{M+1},\ldots, \lambda_k) \big) \leq A_0\theta_{M,p_s}\widetilde{\Gamma}_{\Theta^s,+}(E_{k,p_s}).
        \end{equation*}
    \end{enumerate}
    
    Let us justify the above. Fix a generation $k>1$ and write
    \begin{equation*}
        \sigma_j:=\theta_{0,p_s}^2+\theta_{1,p_s}^2+\cdots+\theta_{j,p_s}^2, \qquad 0\leq j \leq k.
    \end{equation*}
    We assume that there is $1\leq M \leq k$ such that
    \begin{equation}
    \label{eq5.3}
        \sigma_M \leq \frac{\sigma_k}{2}<\sigma_{M+1}.
    \end{equation}
    If this was not the case, we would have
    \begin{equation*}
        \frac{\sigma_k}{2}<\sigma_1=\theta_{0,p_s}^2+\theta_{1,p_s}^2=1+\frac{1}{\big[(d+1)d^n\lambda_1^{n+1}\big]^2}<\bigg( 2+\frac{1}{d} \bigg) \frac{1}{\big[(d+1)d^n\lambda_1^{n+1}\big]^2}.
    \end{equation*}
    That is, $\sigma_k^{-1/2}\geq C^{-1}\lambda_1^{n+1}$. Then, by Lemma \ref{lem5.1} and \cite[Theorem 4.2]{H},
    \begin{equation*}
        \Gamma_{\Theta^s,+}(E_{k,p_s})\leq \Gamma_{\Theta^s}(E_{1,p_s})\lesssim \pazocal{H}^{n+1}_{\infty,p_s}(E_{1,p_s})\lesssim \lambda_1^{n+1},
    \end{equation*}
    and we would be done. We notice that we can also assume $\theta_{M+1,p_s}^2\leq \sigma_M$. Indeed, if this was not satisfied, then
    \begin{equation*}
        \theta_{M+1,p_s}^{2}<\sigma_{M+1}=\sigma_M+\theta_{M+1,p_s}^2<2\theta_{M+1,p_s}^2,
    \end{equation*}
    and thus
    \begin{align*}
        \Gamma_{M+1}(E_{M+1,p_s})&\geq C^{-1}\sigma_{M+1}^{-1/2} > (\sqrt{2}C)^{-1}\theta_{M+1,p_s}^{-1}\geq (\sqrt{2}C)^{-1}\pazocal{H}^{n+1}_{\infty,p_s}(E_{M+1,p_s})\\
        &\geq (\sqrt{2}C)^{-1}\widetilde{C}\,\Gamma_{\Theta^s,+}(E_{M+1,p_s})\geq (\sqrt{2}C)^{-1}\widetilde{C}\,\Gamma_{\Theta^s,+}(E_{k,p_s}).
    \end{align*}
    Then,
    \begin{align*}
        \Gamma_{\Theta^s,+}(E_{k,p_s})\leq \frac{\sqrt{2}C}{\widetilde{C}} \Gamma_{M+1}(E_{M+1,p_s}) \leq \frac{\sqrt{2}C^2}{\widetilde{C}} \sigma_{M+1}^{-1/2} < \frac{C^2}{\widetilde{C}} \sigma_k^{-1/2},
    \end{align*}
    where in the last step we have applied \eqref{eq5.3}. Then, we deduce \eqref{eq5.2} by redefining $C_0$ if necessary, since $C$ and $\widetilde{C}$ are constants depending only on $n$ and $s$. Hence, combining \eqref{eq5.3} with assumption $\theta_{M+1,p_s}^2\leq \sigma_M$ we get \textbf{A1}.
    
    Now, to prove \eqref{eq5.2} we would proceed by induction. Since $\sigma_1^{-1/2}\gtrsim \lambda_1^{n+1}$ (as seen above), the case $k=1$ holds. So the induction hypothesis is
    \begin{equation*}
        \Gamma_{\Theta^s,+}(E_{j,p_s})\leq C_0 \Gamma_j(E_{j,p_s}), \qquad 0<j<k.
    \end{equation*}
    that is precisely assumption \textbf{A2}. We will also denote by
    \begin{equation*}
        E_{p_s}(\lambda_{i_1},\ldots, \lambda_{i_j})
    \end{equation*}
    the $j$-th generation of a Cantor set constructed as in \eqref{eq4.1.1} with $\lambda_{i_l}$ its $l$-th contraction ratio. Now we distinguish two cases:
    \begin{enumerate}
        \item[\textit{1.}] For some constant $A_0(n,s)$ to be determined below, 
        \begin{equation*}
            \Gamma_{\Theta^s,+}\big( E_{p_s}(\lambda_{M+1},\ldots, \lambda_k) \big) \leq A_0\theta_{M,p_s}\Gamma_{\Theta^s,+}(E_{k,p_s}).
        \end{equation*}
        \item[\textit{2.}] The above relation does not hold.
    \end{enumerate}
    We deal first with case \textit{2}. By the induction hypothesis applied to the sequence $\lambda_{M+1},\ldots,\lambda_k$ we have
    \begin{align*}
        \Gamma_{\Theta^s,+}(E_{k,p_s}) &\leq A_0^{-1}\theta_{M,p_s}^{-1} \Gamma_{\Theta^s,+}\big( E_{p_s}(\lambda_{M+1},\ldots, \lambda_k) \big)\\
        &\leq A_0^{-1}C_0\theta_{M,p_s}^{-1}\Gamma_{k-M}\big( E_{p_s}(\lambda_{M+1},\ldots, \lambda_k) \big)\\
        &\leq A_0^{-1}C_0C\theta_{M,p_s}^{-1}\Bigg[ \sum_{j=M+1}^{k} \frac{1}{\big( (d+1)d^n\lambda_{M+1}^{n+1}\cdots (d+1)d^n\lambda_{i}^{n+1} \big)^2} \Bigg]^{-1/2}\\
        &=A_0^{-1}C_0C\Bigg[ \sum_{j=M+1}^{k}\theta_{j,p_s}^2 \Bigg]^{-1/2}= A_0^{-1}C_0C\big[ \sigma_k-\sigma_M \big]^{-1/2}.
    \end{align*}
    Assumption \eqref{eq5.3} implies $\sigma_k\leq 2(\sigma_k-\sigma_M)$, so by Lemma \ref{lem5.1} we get
    \begin{equation*}
        \Gamma_{\Theta^s,+}(E_{k,p_s})\leq \sqrt{2}A_0^{-1}C_0C\sigma_k^{-1/2}\leq \sqrt{2}A_0^{-1}C_0C^2\Gamma_{k}(E_{k,p_s}),
    \end{equation*}
    so taking $A_0\geq \sqrt{2}C^2$ we are done. Hence we are left to study case \textit{1}, which is stated in \textbf{A3}.
    
    Now, let us proceed by considering the measure
    \begin{equation*}
        \mu:=\Gamma_{\Theta^s,+}(E_{k,p_s})\,\mu_M, \qquad \text{where recall} \quad \mu_M:=|E_{M,p_s}|^{-1}\pazocal{L}^{n+1}|_{E_{M,p_s}}.
    \end{equation*}
    If we prove that $\Ps_\mu$ is a bounded operator in $L^2(\mu)$, by definition of $\Gamma_M(E_{M,p_s})$ we would get
    \begin{equation*}
        \Gamma_{\Theta^s,+}(E_{k,p_s}) \leq \Gamma_M(E_{M,p_s})\leq C\sigma_M^{-1/2} \lesssim \sigma_k^{-1/2},
    \end{equation*}
    where the last step holds by \textbf{A1}. Then, by Lemma \ref{lem5.1} we would be done. Thus, the desired upper bound will follow from the next result:
    \begin{lem}
    \label{lem5.2}
        Under assumptions \normalfont{\textbf{A1}}, \normalfont{\textbf{A2}} and \normalfont{\textbf{A3}}, $\Ps_\mu$ is a bounded operator in $L^2(\mu)$.
    \end{lem}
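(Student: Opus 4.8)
The plan is to deduce the $L^2(\mu)$-boundedness of $\pazocal{P}^s_\mu$ from the non-homogeneous local $Tb$ theorem of Auscher and Routin \cite[Theorem 3.5]{AR}, which is available because $\mathbb{R}^{n+1}$ equipped with the $s$-parabolic distance is geometrically doubling. The first task is to check that $\mu=\Gamma_{\Theta^s,+}(E_{k,p_s})\mu_M$ meets the structural hypotheses: it is a finite, compactly supported, positive Borel measure, and it has upper $s$-parabolic growth of degree $n+1$ with a constant independent of $k$. Writing $c:=\Gamma_{\Theta^s,+}(E_{k,p_s})$, this last point follows from the assumptions we may invoke: monotonicity of $\Gamma_{\Theta^s,+}$ under inclusion together with the induction hypothesis \textbf{A2} give $c\leq \Gamma_{\Theta^s,+}(E_{M,p_s})\leq C_0\Gamma_M(E_{M,p_s})\lesssim \sigma_M^{-1/2}$, while $\mu_M$ has $s$-parabolic growth with constant $\lesssim \max_{0\leq j\leq M}\theta_{j,p_s}\leq \sigma_M^{1/2}$; hence $\mu$ has growth $\lesssim 1$, and in particular the flat-scale density bound $c\,\theta_{M,p_s}\lesssim 1$ holds with an absolute constant. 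Assumptions \textbf{A1} (which gives $\sigma_M\approx\sigma_k$) and \textbf{A3} enter, as in the surrounding argument, to keep the remaining constants independent of $k$ and to prevent the induction behind \eqref{eq5.2} from becoming circular.

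It then remains to produce, for every $s$-parabolic cube $Q$ with $\mu(Q)>0$, a pair of accretive testing functions $(b_Q^1,b_Q^2)$, one for $\pazocal{P}^s_\mu$ and one for its transpose $\pazocal{P}^{s,*}_\mu$. I would take $b_Q^1:=\chi_Q$ and $b_Q^2:=\chi_Q\circ R_Q$, where $R_Q(\ox):=2c_Q-\ox$ is the reflection of $\mathbb{R}^{n+1}$ about the center $c_Q$ of $Q$; for the construction cubes these coincide, and in general $b_Q^2$ differs from $\chi_Q$ only off the support of $\mu$, so the size and non-degeneracy requirements $\|b_Q^i\|_\infty\leq 1$ and $\bigl|\int_Q b_Q^i\,\dd\mu\bigr|=\mu(Q)$ are immediate. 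For the testing inequalities at scales $\ell(Q)>\ell_M$ one writes $\pazocal{P}^s_\mu\chi_Q=c\,\pazocal{P}^s_{\mu_M}\chi_Q$ and $\|\cdot\|_{L^2(\mu)}=c^{1/2}\|\cdot\|_{L^2(\mu_M)}$, so that Theorem \ref{thm4.1.8}, applied with the generation $M$ in place of $k$ (its proof is generation-independent and it controls $\pazocal{P}^s_{\mu_M}$ and $\pazocal{P}^{s,*}_{\mu_M}$ simultaneously, for arbitrary cubes), yields $\|\pazocal{P}^s_\mu\chi_Q\|_{L^2(\mu|_Q)}^2\lesssim c^3\sigma_M\,\mu_M(Q)=\bigl(c\,\sigma_M^{1/2}\bigr)^2\mu(Q)\lesssim\mu(Q)$, and the same for $\pazocal{P}^{s,*}_\mu$. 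At the flat scales $\ell(Q)\leq \ell_M$, where $\mu$ is a multiple of Lebesgue measure, the identical bound comes from polar integration, using $s>1/2$ and $c\,\theta_{M,p_s}\lesssim 1$. The transpose is handled via the identity $\pazocal{P}^{s,*}_\mu f=\bigl(\pazocal{P}^s_\mu(f\circ R_Q)\bigr)\circ R_Q$, valid because $\mu$ inherits the invariance of the Cantor construction under reflection of each coordinate and because $(\nabla_x P_s)^*(\ox)=\nabla_x P_s(-\ox)$; this reflection identity is precisely what plays, for $\nabla_x P_s$, the role that temporal anti-symmetry plays for Riesz kernels, and it lets $\pazocal{P}^{s,*}_\mu$ be tested on the same footing as $\pazocal{P}^s_\mu$.

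With all hypotheses of \cite[Theorem 3.5]{AR} verified, the conclusion is that $\pazocal{P}^s_\mu$ is bounded on $L^2(\mu)$ with norm controlled only by $n$, $s$ and the structural parameters $\tau_0,B,A_0,C_0$, which is the assertion of the lemma. I expect the main obstacle, and the place where the structure of $\nabla_x P_s$ really matters, to be the transpose testing condition: since $\nabla_x P_s$ is only spatially anti-symmetric, $\pazocal{P}^{s,*}_\mu$ is a genuinely different operator and cannot be read off from $\pazocal{P}^s_\mu$ by a sign change as with the Cauchy or Riesz kernels, so one must build the accretive system so that it sees the reflection $\ox\mapsto-\ox$ interchanging $\nabla_x P_s$ with its conjugate — equivalently, one must verify that the generation-$M$ form of Theorem \ref{thm4.1.8} for the conjugate kernel, whose proof rests on the per-time-slice spatial anti-symmetry used in Lemma \ref{lem1.2}, indeed holds for every $s$-parabolic cube, not only for the construction cubes. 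A secondary, bookkeeping-type difficulty is to order the estimates so that the a priori bounds $c\,\sigma_M^{1/2}\lesssim 1$ and $c\,\theta_{M,p_s}\lesssim 1$ are secured before invoking the local $Tb$ theorem, ensuring that the resulting operator norm — hence the constant $C_0$ in \eqref{eq5.2} — does not depend on the generation $k$.
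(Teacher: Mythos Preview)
Your proposal has a genuine circularity that prevents the induction behind \eqref{eq5.2} from closing. You test with $b_Q=\chi_Q$ and invoke Theorem \ref{thm4.1.8} at generation $M$, which gives
\[
\|\pazocal{P}^s_\mu\chi_Q\|_{L^2(\mu|_Q)}^2
\;=\;c^3\|\pazocal{P}^s_{\mu_M}\chi_Q\|_{L^2(\mu_M|_Q)}^2
\;\lesssim\;(c\,\sigma_M^{1/2})^2\,\mu(Q),
\]
so the testing constant is $c\,\sigma_M^{1/2}$. To make this $\lesssim 1$ you appeal to \textbf{A2}, obtaining $c\leq C_0\,\Gamma_M(E_{M,p_s})\lesssim C_0\,\sigma_M^{-1/2}$; hence the testing constant, and therefore the $L^2(\mu)$ operator norm delivered by the local $Tb$ theorem, is only $\lesssim C_0$. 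But the very next step of the argument (the line immediately preceding the lemma) reads $\Gamma_{\Theta^s,+}(E_{k,p_s})\leq \|\pazocal{P}^s_\mu\|_{L^2(\mu)\to L^2(\mu)}\cdot\Gamma_M(E_{M,p_s})$, so you would conclude $\Gamma_{\Theta^s,+}(E_{k,p_s})\lesssim C_0\,\sigma_k^{-1/2}$ with a strictly larger implicit constant than $C_0$, and the induction on $k$ does not stabilise. Note that $c\,\sigma_M^{1/2}\lesssim 1$ with an \emph{absolute} constant is, via \textbf{A1} and Lemma \ref{lem5.1}, equivalent to the estimate you are trying to prove; the only a priori bound available without \textbf{A2} is the Hausdorff-content bound $c\,\theta_{M,p_s}\lesssim 1$, which is much weaker than $c\,\sigma_M^{1/2}\lesssim 1$. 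This is not a bookkeeping issue: choosing $b_Q=\chi_Q$ forces the testing constant to see the full quantity $\sigma_M$, and no reordering of the estimates avoids it.

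The paper circumvents this by \emph{not} using Theorem \ref{thm4.1.8} (equivalently, not using the $L^2(\mu_M)$ information) at all in the proof of the lemma. Instead it picks a near-extremal measure $\nu$ for $\Gamma_{\Theta^s,+}(E_{k,p_s})$ with $\|\pazocal{P}^s\nu\|_\infty\leq 1$ and builds, for each construction cube $Q^j_i$, an accretive function $b^j_i$ whose measure $b^j_i\,\mu$ mimics $\psi^j_i\nu$ at the scale $\ell_M$. The $L^\infty$ bounds $\|\pazocal{P}^s_\mu b^j_i\|_\infty\lesssim 1$ then come from localisation of $\|\pazocal{P}^s\nu\|_\infty\leq 1$ together with \textbf{A3} (which yields $\langle\nu,\psi^M_m\rangle\lesssim A_0\,\mu(Q^M_m)$ and hence $\|b^j_i\|_\infty\lesssim 1$), and the resulting constants depend only on $n,s,\tau_0,A_0$ --- never on $C_0$. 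Your reflection idea for the transpose is essentially right and matches the paper's use of the temporal reflection $\mathsf{R}^j_i$ about the centre of $Q^j_i$ (both work because $(\nabla_xP_s)(x,-t)=-(\nabla_xP_s)^*(x,t)$ and the Cantor construction is coordinate-symmetric inside each $Q^j_i$); the gap is entirely in the choice of accretive system and the source of the testing bounds.
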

    \begin{proof}
        We shall verify the hypothesis of a local $Tb$ theorem in the setting of spaces of homogeneous type found in \cite[Theorem 3.5]{AR}. Using the notation of the previous article, we work with the metric $\rho$ induced by the $s$-parabolic distance, with our particular measure $\mu$, and we choose as dominating function $\Lambda(\ox,r):=cr^{n+1}$, with $c=c(n,s)$ big enough. Observe that given any $s$-parabolic cube $Q\subset \mathbb{R}^{n+1}$ centered at $E_{M,p_s}$ we have:
        \begin{enumerate}
            \item[\textit{1.}] If $\ell(Q)\leq \ell_M$, since $2s-1>0$,
            \begin{equation*}
                \mu(Q)\leq \frac{\Gamma_{\Theta^s,+}(E_{k,p_s})}{|E_{M,p_s}|}\ell(Q)^{n+2s}\lesssim \frac{\ell_k^{n+1}}{\ell_{M}^{n+2s}}\ell(Q)^{n+2s} \leq \ell(Q)^{n+1}.
            \end{equation*}
            \item[2.] If $\ell(Q)>\ell_M$, we may assume that there is $0\leq N < M$ such that $\ell_N\leq \ell(Q) < \ell_{N+1}$. Since $Q$ intersects a bounded number (depending on $n$ and $s$) of $s$-parabolic cubes of the $N$-th generation, we have
            \begin{align*}
                \mu(Q) &\lesssim \mu(Q^N) = {\Gamma_{\Theta^s,+}(E_{k,p_s})}\frac{1}{(d+1)^Nd^{nN}}\leq {\Gamma_{\Theta^s,+}(E_{N,p_s})}\frac{1}{(d+1)^Nd^{nN}}\\
                &\lesssim \pazocal{H}^{n+1}_{\infty,p_s}(E_{N,p_s})\frac{1}{(d+1)^Nd^{nN}}\leq \ell_N^{n+1} \leq \ell(Q)^{n+1}.
            \end{align*}
        \end{enumerate}
        This implies that $\mu(B(\ox,r))\leq \Lambda(\ox,r)$, for any $\ox\in\mathbb{R}^{n+1}$ and $r>0$. Following \cite{MT}, the same kind of ideas can be used to prove that $\mu(B(\ox,2r))\lesssim \mu(B(\ox,r))$ holds. Now, given any $s$-parabolic cube $Q^j_i$ of the $j$-th generation, $0\leq j \leq M$, $1\leq i \leq (d+1)^jd^{jn}$, we will construct two functions $b_i^j$ and $b_i^{j,\ast}$ supported on $Q_i^j$ with
        \begin{align}
        \label{eq5.5}
            \|b_i^j\|_{L^\infty(\mu)}\leq 1,& \qquad \|b_i^{j,\ast}\|_{L^\infty(\mu)}\leq 1,\\
            \label{eq5.6}
            \mu(Q_i^j)\leq C\bigg\rvert \int b_i^j\dd\mu \bigg\rvert,& \qquad \mu(Q_i^j)\leq C\bigg\rvert \int b_i^{j,\ast}\dd\mu \bigg\rvert \quad \text{and}\\
            \label{eq5.7}
            \|\Ps_{\mu,\varepsilon}b_i^j\|_{L^\infty(\mu)}\leq 1,& \qquad \|\pazocal{P}^{s,\ast}_{\mu,\varepsilon}b_i^{j,\ast}\|_{L^\infty(\mu)}\leq 1, \qquad \text{uniformly on $\varepsilon>0$.}
        \end{align}
        Begin by considering a positive Borel measure $\nu$ admissible for $\Gamma_{\Theta^s,+}(E_{k,p_s})$ such that $\Gamma_{\Theta^s,+}(E_{k,p_s})\leq 2\nu(E_{k,p_s})$. Recall that there exists an absolute parameter $\tau_0$ so that $0<\lambda_j\leq \tau_0<1/d$, for each $j$. Hence, we shall consider $\alpha(\tau_0)>1$ an $s$-parabolic dilation factor small enough so that the dilated $M$-th generation $ \alpha\,E_{M,p_s}$ is still conformed by disjoint $s$-parabolic cubes. For each $1\leq i \leq (d+1)^Md^{Mn}$ take $\psi_i^M$ a smooth function satisfying $\chi_{Q_i^M}\leq \psi_i^M \leq \chi_{\alpha Q_i^M}$ and such that
        \begin{equation*}
            \|\nabla_x\psi_i^M\|_{\infty} \leq C(\tau_0)\ell_M^{-1}, \qquad \|\partial_t\psi_i^M\|_\infty \leq C'(\tau_0)\ell_{M}^{-2s}, \qquad \|\Delta_x\psi_i^M\|_\infty \leq C''(\tau_0)\ell_M^{-2}.
        \end{equation*}
        Notice that for $i_1\neq i_2$ the functions $\psi_{i_1}^M$ and $\psi_{i_2}^M$ have disjoint supports. For a fixed generation $0\leq j \leq M$ we will also write
        \begin{equation*}
            \psi_i^j := \sum_{Q_m^M \subset Q_i^j} \psi_m^M, \qquad  1 \leq i \leq (d+1)^{j}d^{jn}.
        \end{equation*}
        Observe that $\psi_i^j$ is admissible for $Q_i^j$ (with implicit constants depending on $\tau_0$). Since $\text{supp}(\nu)\subset E_{k,p_s}$, for each generation $j$ there is an index $i_0$ such that 
        \begin{equation*}
            \Gamma_{\Theta^s,+}(E_{k,p_s})\leq 2 \nu(E_{j,p_s}) \leq 2 \sum_{i=1}^{(d+1)^jd^{jn}}\langle \nu, \psi_i^j \rangle \leq 2(d+1)^jd^{jn}\langle \nu, \psi_{i_0}^j \rangle.
        \end{equation*}
        In other words,
        \begin{equation}
        \label{eq5.8}
            \mu(Q_i^j)\leq 2\langle \nu, \psi_{i_0}^j \rangle, \qquad \text{for all} \quad 1 \leq i \leq (d+1)^jd^{jn}.
        \end{equation}
        With this in mind, we fix a generation $0\leq j \leq M$ and $1\leq i \leq (d+1)^{j}d^{jn}$ and define $b_i^j$. Consider $\varphi$ test function supported on $Q^0$, $0\leq \varphi \leq 1$, $\int_{Q^0}\varphi \geq 1/2$ and such that
        \begin{equation}
        \label{eq5.9}
            \|\Ps_{\pazocal{L}^{n+1}|_{Q^0},\varepsilon}\varphi \|_\infty \lesssim \ell(Q^0)^{2s-1}\|\varphi\|_\infty, \qquad \text{uniformly on $\varepsilon>0$}.
        \end{equation}
        The above property can be imposed since $|\nabla_xP_s(\ox)|\lesssim |\ox|_{p_s}^{-n-1}$ \cite[Theorem 2.2]{HMPr} and $2s-1>0$. Now set
        \begin{equation*}
            \varphi_i^M(\ox):= \varphi \bigg( \frac{\ox-\overline{v}_i^M}{\ell_M} \bigg),
        \end{equation*}
        where $\overline{v}_i^M$ is the vertex of $Q_i^M$ closest to the origin. We have $\text{supp}(\varphi_i^M)\subset Q_i^M$ and equally $\|\Ps_{\pazocal{L}^{n+1}|_{Q^M_i},\varepsilon}\varphi^M_i\|_\infty \lesssim \ell(Q^0)^{2s-1}\|\varphi\|_\infty$ uniformly on $\varepsilon>0$. Moreover,
        \begin{align}
        \nonumber
            \int \varphi_i^M \dd\mu &= \frac{\widetilde{\Gamma}_{\Theta^s,+}(E_{k,p_s})}{|E_{M,p_s}|}\int_{Q_i^M}\varphi_i^M \dd\pazocal{L}^{n+1} = \frac{\widetilde{\Gamma}_{\Theta^s,+}(E_{k,p_s})}{(d+1)^Md^{Mn}}\int_{Q^0}\varphi \dd\pazocal{L}^{n+1}\\
            &\geq \frac{1}{2} \frac{\widetilde{\Gamma}_{\Theta^s,+}(E_{k,p_s})}{(d+1)^Md^{Mn}} = \frac{1}{2} \mu(Q_i^M).
            \label{eq5.10}
        \end{align}
        The last equality of the first line clarifies that the integral of $\varphi_i^M$ with respect to $\mu$ is independent of $i$. Now we shall define $b_{i_0}^j$ as follows,
        \begin{equation*}
            b_{i_0}^j:=\sum_{Q_m^M \subset Q_{i_0}^j} \langle \nu, \psi_m^M \rangle \frac{\varphi_m^M}{\int \varphi_m^M \dd\mu }.
        \end{equation*}
        For $i\neq i_0$ we construct $b_i^j$ by translation of $b_{i_0}^j$. More precisely, if $Q_i^j = \overline{w}_i^j+Q_{i_0}^j$, we put
        \begin{equation*}
            b_i^j(\ox):=b_{i_0}^{j}(\ox-\overline{w}_i^j), \qquad \overline{x}\in\mathbb{R}^{n+1}.
        \end{equation*}

        Now we define $b_i^{j,\ast}$ composing $b_i^{j}$ with a proper temporal reflection. More precisely, if $\overline{c}_{i}^j$ denotes the center of $Q_i^j$ and $t_{i}^j$ its temporal component, we name $\mathsf{R}_{i}^j$ the temporal reflection with respect to the horizontal hyperplane $\{t=t_{i}^j\}$. That is,
        \begin{equation*}
            \mathsf{R}_i^j(x_1,\ldots,x_n,t):=(x_1,\ldots,x_n,2t_{i}^j-t).
        \end{equation*}
        With this, we define
        \begin{equation*}
            b_i^{j,\ast}(\ox):=b_i^j(\mathsf{R}_i^j(\ox)).
        \end{equation*}
        Since $\mathsf{R}_i^j(Q_i^j)=Q_i^j$, it is clear that $\text{supp}(b_i^{j,\ast})\subset Q_i^j$. Another key observation is that, by construction of the Cantor set, the set
        \begin{equation*}
            \QQ_i^{M\nearrow j}:=\bigcup_{Q_m^M\subset Q_i^j} Q_m^M\qquad \text{also satisfies} \qquad \mathsf{R}_i^j(\QQ_i^{M\nearrow j}) = \QQ_i^{M\nearrow j}.
        \end{equation*}

        We now begin by proving that \eqref{eq5.6} holds. Observe that with estimate \eqref{eq5.8} and the independence of $\int \varphi_i^M$ with respect to $i$, writing $\varphi_\mathsf{R_i^j(m)}^M:=\varphi_m^M\circ\mathsf{R}_i^j$ we directly deduce the desired estimates:\medskip\\
        \begin{align}
        \label{eq5.11}
            \bigg\rvert \int b_i^j \dd\mu \bigg\rvert &= \sum_{Q_m^M \subset Q_{i_0}^j} \langle \nu, \psi_m^M \rangle \frac{\int \varphi_m^M(\,\cdot-\overline{w}_i^j) \dd\mu}{\int \varphi_m^M \dd\mu } = \langle \nu, \psi_{i_0}^j \rangle \geq \frac{1}{2}\mu(Q_i^j), \quad \text{and}\\
        \label{eq5.11_conj}
            \bigg\rvert \int b_{i}^{j,\ast}\dd\mu \bigg\rvert &= \sum_{Q_m^M \subset Q_{i_0}^j} \langle \nu, \psi_m^M \rangle \frac{\int \varphi_{\mathsf{R}_i^j(m)}^M \dd\mu}{\int \varphi_m^M \dd\mu } = \langle \nu, \psi_{i_0}^j \rangle \geq \frac{1}{2}\mu(Q_i^j).
        \end{align}
        Moreover, since $\nu$ is admissible for $\Gamma_{\Theta^s,+}(E_{k,p_s})$ and thus has upper $s$-parabolic growth of degree $n+1$, by \cite[Lemma 4.2]{HMPr} and the localization result \cite[Lemma 3.2]{H} we get
        \begin{equation*}
            \|\Ps_\nu \psi_i^j\|_\infty \lesssim 1,
        \end{equation*}
        with implicit constants depending on $n,s$ and $\tau_0$, for all $0\leq j \leq M$ and $1\leq i \leq (d+1)^jd^{jn}$. Then, in particular,
        \begin{equation*}
            \langle \nu, \psi_{i_0}^M \rangle \lesssim \Gamma_{\Theta^s,+}\big( \alpha Q_{i_0}^M\cap E_{k,p_s}\big) = \Gamma_{\Theta^s,+}\big(Q_{i_0}^M\cap E_{k,p_s}\big).
        \end{equation*}
        But the set $Q_{i_0}^M\cap E_{k,p_s}$ can be obtained by dilating $E_{p_s}(\lambda_{M+1},\ldots,\lambda_k)$ an $s$-parabolic factor $\ell_M$. Thus, by assumption \textbf{A3},
        \begin{align*}
            \Gamma_{\Theta^s,+}\big(Q_{i_0}^M\cap E_{k,p_s}\big) &= \ell_{M}^{n+1}\Gamma_{\Theta^s,+}\big(E_{p_s}(\lambda_{M+1},\ldots,\lambda_k) \big)\\
            &\leq \ell_M^{n+1}A_0\theta_{M,p_s}\Gamma_{\Theta^s,+}(E_{k,p_s}) = \frac{A_0}{(d+1)^Md^{Mn}}\Gamma_{\Theta^s,+}(E_{k,p_s}) =A_0\mu(Q_{i_0}^M). 
        \end{align*}
        So we get
        \begin{align*}
            \bigg\rvert \int b_i^j \dd\mu \bigg\rvert &= \bigg\rvert \int b_i^{j,\ast} \dd\mu \bigg\rvert = \langle \nu, \psi_{i_0}^j \rangle = \sum_{Q_m^M \subset Q_{i_0}^j} \langle \nu, \psi_m^M \rangle \leq  \sum_{Q_m^M \subset Q_{i_0}^j} \langle \nu, \psi_{i_0}^M \rangle \\
            &\leq A_0 \sum_{Q_m^M \subset Q_{i_0}^j} \mu(Q_{i_0}^M)=\mu(Q_{i_0}^j)=\mu(Q_i^j)
        \end{align*}
        Therefore, by \eqref{eq5.11} and \eqref{eq5.11_conj} we get
        \begin{equation}
            \label{eq5.12}
            \mu(Q_i^j)\geq \bigg\rvert \int b_i^j \dd\mu \bigg\rvert = \bigg\rvert \int b_i^{j,\ast} \dd\mu \bigg\rvert = \langle \nu, \psi_{i_0}^j \rangle \geq \frac{1}{2}\mu(Q_i^j),
        \end{equation}
        and \eqref{eq5.5} follows. 
        
        We are left to verify relations \ref{eq5.7}. In fact, we will bound $|\pazocal{P}^s_{\mu,\varepsilon}b_i^j|$ at every point, and from this we will get the same estimate for the conjugate operator by the following observation: since $\QQ_i^{M\nearrow j}$ is invariant under $\mathsf{R}_i^j$ and $\mathsf{R}_i^j$ is its own inverse,
        \begin{align*}
            \pazocal{P}^{s,\ast}_{\mu}b_i^{j,\ast}(\ox) &= \frac{\Gamma_{\Theta^s,+}(E_{k,p_s})}{|E_{M,p_s}|} \int_{\QQ_i^{M\nearrow j}} \nabla_xP_s(x-y,u-t)b_i^{j}(y,2t_i^j-u)\dd y \dd u\\
            &=\frac{\Gamma_{\Theta^s,+}(E_{k,p_s})}{|E_{M,p_s}|} \int_{\QQ_i^{M\nearrow j}} \nabla_xP_s(x-y,2t_i^j-t-u)b_i^{j}(y,u)\dd y \dd u = \pazocal{P}^s_\mu b_i^j(\mathsf{R}_i^j(\ox)).
        \end{align*}
        So we focus our efforts on estimating $|\pazocal{P}^s_{\mu,\varepsilon}b_i^j|$. Let us begin our arguments by choosing $\Psi^\varepsilon:=\varepsilon^{-n-2s}\Psi(\frac{\cdot}{\varepsilon})$ a standard mollifier in the $s$-parabolic space $\mathbb{R}^{n+1}$. Estimate $\|\Ps(\psi_i^j\nu)\|_\infty \lesssim 1$ is equivalent to
        \begin{equation}
        \label{eq5.13}
            \|\nabla_xP_s\ast \Psi^\varepsilon\ast (\psi_i^j\nu)\|_\infty \lesssim 1,
        \end{equation}
        where we convey that $\|\Psi\|_{L^1(\mathbb{R}^{n+1})}=1$. We call $\nabla_x P_s^\varepsilon := \nabla_xP_s\ast \Psi^\varepsilon$ the regularized kernel and $\pazocal{P}_\mu^{s,\varepsilon}$ its associated convolution operator. Notice the different position of the symbol $\varepsilon$ in the previous operator with respect to $\Ps_{\mu,\varepsilon}$, that recall that is a truncation of $\Ps_\mu$.

        It is not hard to prove that for $|\ox|_{p_s}<\varepsilon$, one has $|\nabla_xP_s^\varepsilon(\ox)|\lesssim \varepsilon^{-(n+1)}$. Then, given $\sigma$ any Borel measure (possibly signed) with upper $s$-parabolic growth of degree $n+1$, by the growth estimates of \cite[Theorem 2.2]{HMPr} we get for any $\ox\in\mathbb{R}^{n+1}$\medskip
        \begin{align*}
            \big\rvert \pazocal{P}^{s,\varepsilon}\sigma(\ox) &- \Ps_\varepsilon \sigma (\ox) \big\rvert \\
            &= \bigg\rvert \int_{|\ox-\oy|_{p_s}\leq 4\varepsilon} \nabla_xP_s^\varepsilon(\ox-\oy)\dd\sigma(\oy) + \int_{|\ox-\oy|_{p_s}> 4\varepsilon} \nabla_xP_s^\varepsilon(\ox-\oy)\dd\sigma(\oy)\\
            &\hspace{1cm}-\int_{\varepsilon<|\ox-\oy|_{p_s}<\leq 4\varepsilon} \nabla_xP_s(\ox-\oy)\dd\sigma(\oy)-\int_{|\ox-\oy|_{p_s}> 4\varepsilon} \nabla_xP_s(\ox-\oy)\dd\sigma(\oy)\bigg\rvert\\
            &\lesssim \frac{|\sigma|(B(\ox,4\varepsilon))}{\varepsilon^{n+1}} + \int_{|\ox-\oy|_{p_s}> 4\varepsilon} |\nabla_xP_s^\varepsilon(\ox-\oy)-\nabla_xP_s(\ox-\oy)|\dd|\sigma|(\oy)\\
            &\lesssim 1 +\int_{|\ox-\oy|_{p_s}>4\varepsilon}\int_{|\oz|_{p_s}<\varepsilon} |\nabla_xP_s(\ox-\oy-\oz)-\nabla_xP_s(\ox-\oy)|\Psi^\varepsilon(\oz)\dd\oz \dd|\sigma|(\oy)\\
            &\lesssim 1+ \int_{|\ox-\oy|_{p_s}>4\varepsilon}\int_{|\oz|_{p_s}<\varepsilon} \frac{|\oz|_{p_s}}{|\ox-\oy|_{p_s}^{n+2}}\Psi^\varepsilon(\oz) \dd\oz \dd|\sigma|(\oy)\\
            &\lesssim 1 +\varepsilon\int_{|\oz|_{p_s}<\varepsilon}\bigg(  \int_{|\ox-\oy|_{p_s}>4\varepsilon} \frac{\dd|\sigma|(\oy)}{|\ox-\oy|_{p_s}^{n+2}} \bigg)\Psi^\varepsilon(\oz)\dd\oz\lesssim 1+\varepsilon\frac{\|\Psi^\varepsilon\|_{L^1}}{\varepsilon}\lesssim 1.
        \end{align*}
        This implies that the boundedness of $\Ps_{\mu,\varepsilon}b_i^j$ follows from that of $\pazocal{P}^{s,\varepsilon}_\mu b_{i_0}^j$, that in turn by \eqref{eq5.13} follows from that of
        \begin{equation}
        \label{eq5.14}
            \big\rvert \pazocal{P}^{s,\varepsilon}_\mu b_{i_0}^j - \pazocal{P}^{s,\varepsilon}_\nu \psi_{i_0}^j \big\rvert.
        \end{equation}
        Observe that the above difference can be written as
        \begin{equation*}
            \pazocal{P}^{s,\varepsilon}_\mu b_{i_0}^j - \pazocal{P}^{s,\varepsilon}_\nu \psi_{i_0}^j = \sum_{Q_m^M\subset Q_{i_0}^j} \pazocal{P}^{s,\varepsilon}\alpha_m^M,
        \end{equation*}
        where we have defined
        \begin{equation*}
            \alpha_m^M:=\langle \nu, \psi_m^M \rangle \frac{\varphi_m^M}{\int \varphi_m^M \dd\mu}\mu - \psi_m^M\nu, \qquad \text{that is such that} \quad \int \dd\alpha_m^M = 0.
        \end{equation*}
        Let us prove two claims that will help us estimate \eqref{eq5.14}:
        \begin{enumerate}[itemsep=0.3cm]
            \item[\textit{1.}] The following holds,
            \begin{equation}
                \label{eq5.15}
                |\pazocal{P}^{s,\varepsilon}\alpha_m^M(\ox)\lesssim 1|, \qquad \ox\in\mathbb{R}^{n+1}.
            \end{equation}
            Indeed, notice that on the one hand
            \begin{align*}
                |\pazocal{P}^{s,\varepsilon}_\mu \varphi_m^M(\ox)| &= \frac{\widetilde{\Gamma}_{\Theta^{s},+}(E_{k,p_s})}{|E_{M,p_s}|}|\pazocal{P}^{s,\varepsilon}_{\pazocal{L}^{n+1}|_{Q_m^M}} \varphi_m^M(\ox)|\\
                &\leq \frac{\widetilde{\Gamma}_{\Theta^{s},+}(E_{k,p_s})}{(d+1)^Md^{Mn}\ell_M^{n+1}}\| \pazocal{P}^{s,\varepsilon}_{\pazocal{L}^{n+1}|_{Q^0}} \varphi\|_\infty \\
                &\lesssim \frac{\pazocal{H}^{n+1}_{\infty,p_s}(E_{M,p_s})}{(d+1)^Md^{Mn}\ell_M^{n+1}}\| \pazocal{P}^{s,\varepsilon}_{\pazocal{L}^{n+1}|_{Q^0}} \varphi\|_\infty \leq \| \pazocal{P}^{s,\varepsilon}_{\pazocal{L}^{n+1}|_{Q^0}} \varphi\|_\infty,
            \end{align*}
            and by \eqref{eq5.9} we get $|\pazocal{P}^{s,\varepsilon}_\mu \varphi_m^M(\ox)|\lesssim 1$. We also have $|\pazocal{P}^{s,\varepsilon}_\nu \psi_m^M(\ox)|\lesssim 1$ for almost every point by \eqref{eq5.13}, and by continuity this extends to all points in $\mathbb{R}^{n+1}$. Finally, by \eqref{eq5.10} and \eqref{eq5.12}
            \begin{equation*}
                \int\varphi_m^M\dd\mu \geq \frac{1}{2}\mu(Q_m^M) \geq \langle \nu, \psi_m^M \rangle,
            \end{equation*}
            and the desired estimate follows.
            \item[\textit{2.}] Let us now fix $m$ and a cube $Q_m^M$ centered at $\ox_m^M$. Take $\ox\in\mathbb{R}^{n+1}$ with $|\ox-\ox_m^M|_{p_s}>4\ell_M$ and assume $\varepsilon<\ell_M/2$. Then,
            \begin{equation}
                \label{eq5.16}
                \pazocal{P}^{s,\varepsilon}\alpha_m^M(\ox)\lesssim \frac{\ell_M^{n+2}}{\text{dist}_{p_s}(\ox,Q_m^M)^{n+2}}.
            \end{equation}
            To prove the latter, begin by taking $\oy\in B(\ox,\varepsilon)$ and write
            \begin{equation*}
                |\pazocal{P}^s\alpha_m^M(\oy)|=\bigg\rvert \frac{\langle \nu, \psi_m^M \rangle}{\int \varphi_m^M \dd\mu} \Ps_\mu \varphi_m^M(\oy) - \Ps_\nu\psi_m^M(\oy) \bigg\rvert.
            \end{equation*}
            Notice
            \begin{align*}
                \Ps_\mu\varphi_m^M(\oy)= \int_{Q_m^M}\big( \nabla_xP_s(\oy-\oz) - \nabla_xP_s(\oy&-\ox_m^M) \big)\varphi_m^M(\oz)\dd\mu(\oz)\\
                &+\nabla_xP_s(\oy-\ox_m^M)\int_{Q_m^M} \varphi(\oz)\dd\mu(\oz),
            \end{align*}
            which implies,
            \begin{align*}
                |\Ps\alpha_m^M(\oy)|\leq \big\rvert &\Ps_\nu\psi_m^M(\oy)-\langle\nu,\psi_m^M\rangle\nabla_xP_s(\oy-\ox_m^M) \big\rvert\\
                &\hspace{-0.25cm}+\frac{\langle \nu, \psi_m^M\rangle}{\int \varphi_m^m\dd\mu}\int_{Q_m^M} |\nabla_xP_s(\oy-\oz-\nabla_xP_s(\oy-\ox_m^M))|\varphi_m^M(\oz)\dd\mu(\oz).
            \end{align*}
            For the second summand we apply the last estimate of \cite[Theorem 2.2]{HMPr} and obtain that it can be bounded by
            \begin{equation*}
                \frac{\langle \nu, \psi_m^M\rangle}{\int \varphi_m^m\dd\mu}\int_{Q_m^M}\frac{|\oz-\ox_m^M|_{p_s}}{|\oy-\oz|_{p_s}^{n+2}}\varphi_m^M(\oz)\dd\mu(\oz)\lesssim \frac{\ell_M^{n+2}}{\text{dist}_{p_s}(\oy,Q_m^M)^{n+2}},
            \end{equation*}
            where we have used $\langle \nu, \psi_m^M\rangle\lesssim \ell_M^{n+1}$ by \cite[Theorem 3.1]{H}. Here, implicit constants may depend on $\tau_0$. 
            
            For the first summand, naming $\varphi_{\oy}:=\nabla_xP_s(\oy-\cdot)-\nabla_xP_s(\oy-\ox_{m}^M)$ we get
            \begin{equation*}
                \big\rvert \Ps_\nu\psi_m^M(\oy)-\langle\nu,\psi_m^M\rangle\nabla_xP_s(\oy-\ox_m^M) \big\rvert = |\langle \varphi_{\oy}, \psi_m^M\nu \rangle|
            \end{equation*}
            It is clear that
            \begin{equation*}
                \|\varphi_{\oy}\|_{L^\infty(2Q_m^M)}\lesssim \frac{\ell_M}{\text{dist}_{p_s}(\oy,Q_m^M)^{n+2}}.
            \end{equation*}
            In addition, by \cite[Theorem 2.2]{HMPr} it is also clear that for almost every point $\oz\in 2Q_m^M$ the following bounds hold,
            \begin{align*}
                |\partial_{x_i}^2P_s(\oz-\oy)|&\lesssim \frac{1}{|\oz-\oy|_{p_s}^{n+2}}\lesssim \frac{1}{\text{dist}_{p_s}(\oy,Q_m^M)^{n+2}},\\
                |\partial_{x_i}^3P_s(\oz-\oy)|&\lesssim \frac{1}{|\oz-\oy|_{p_s}^{n+3}}\lesssim \frac{1}{\text{dist}_{p_s}(\oy,Q_m^M)^{n+2}\,\ell_M},\\
                |\partial_t\partial_{x_i}P_s(\oz-\oy)|&\lesssim \frac{|z-y|}{|\oz-\oy|_{p_s}^{n+2s+2}}\lesssim \frac{\ell_M}{\text{dist}_{p_s}(\oy,Q_m^M)^{n+2}\,\ell_M^{2s}}.
            \end{align*}
            Now we observe that the function
            \begin{equation*}
                \eta:= \bigg[ \frac{\ell_M}{\text{dist}_{p_s}(\oy,Q_m^M)^{n+2}} \bigg]^{-1} \varphi_{\oy}\psi_{m}^M
            \end{equation*}
            becomes admissible for $\widetilde{\Gamma}_{\Theta^s,+}(2Q_m^M)$, with implicit constants also depending on $\tau_0$. To be precise, the previous function may lack being differentiable with respect to time in a set of null $\pazocal{L}^{n+1}$-measure. Nevertheless, \cite[Theorem 3.1]{HMPr} can be also proved, with minor modifications, for such functions, and so the growth result \cite[Theorem 3.1]{H} can be also extended to this setting. Then,
            \begin{align*}
                |\langle \varphi_{\oy}, \psi_m^M\nu \rangle| = \frac{\ell_M}{\text{dist}_{p_s}(\oy,Q_m^M)^{n+2}}|\langle \eta, \nu \rangle|\lesssim \frac{\ell_M^{n+2}}{\text{dist}_{p_s}(\oy,Q_m^M)^{n+2}}.
            \end{align*}
            Therefore, given $\ox\in\mathbb{R}^{n+1}$ with $|\ox-\ox_m^M|>4\ell_M$ and $\varepsilon<\ell_M/2$ we have proved:
            \begin{equation*}
                |\Ps\alpha_m^M(\oy)|\lesssim \frac{\ell_M^{n+2}}{\text{dist}_{p_s}(\oy,Q_m^M)^{n+2}}, \qquad \oy\in B(\ox,\varepsilon).
            \end{equation*}
            This in turn implies
            \begin{equation*}
                \pazocal{P}^{s,\varepsilon}\alpha_m^M(\ox)\leq \int_{B(\ox,\varepsilon)}\Psi^\varepsilon(\ox-\oy)|\Ps\alpha_m^M(\oy)|\dd\oy \lesssim \frac{\ell_M^{n+2}}{\text{dist}_{p_s}(\oy,Q_m^M)^{n+2}},
            \end{equation*}
            since $\|\Psi^\varepsilon\|_{L^1}=1$, that is what we wanted to prove.
        \end{enumerate}
        \medskip
    Now let $\varepsilon<\ell_m/2$, $\ox\in \mathbb{R}^{n+1}$ and fix $Q_m^M(\ox)$ one of the $s$-parabolic cubes of the $M$-th generation that is closest to $\ox$. Since the number of cubes such that $\text{dist}_{p_s}(Q_m^M, Q_{m}^M(\ox))\leq 4\ell_M$ is bounded by a constant depending on $n$ and $s$, we get by relations \eqref{eq5.15} and \eqref{eq5.16},
    \begin{align*}
        \big\rvert \pazocal{P}^{s,\varepsilon}_\mu &b_{i_0}^j(\ox) - \pazocal{P}^{s,\varepsilon}_\nu \psi_{i_0}^j(\ox) \big\rvert\leq \sum_{Q_m^M\subset Q_{i_0}^{j}} |\pazocal{P}^{s,\varepsilon}\alpha_m^M(\ox)|\\
        &=\sum_{\substack{ Q_m^M\subset Q_{i_0}^{j} \\ \text{dist}_{p_s}(Q_m^M, Q_{m}^M(\ox))\leq 4\ell_M }} |\pazocal{P}^{s,\varepsilon}\alpha_m^M(\ox)| + \sum_{\substack{ Q_m^M\subset Q_{i_0}^{j} \\ \text{dist}_{p_s}(Q_m^M, Q_{m}^M(\ox))> 4\ell_M }} |\pazocal{P}^{s,\varepsilon}\alpha_m^M(\ox)|\\
        &\lesssim 1 + \sum_{\substack{ Q_m^M\subset Q_{i_0}^{j} \\ Q_m^M \neq Q_m^M(\ox) }} \frac{\ell_M^{n+2}}{\text{dist}_{p_s}(\ox,Q_m^M)^{n+2}} \leq 1+\sum_{r=j}^{M-1}\sum_{Q_m^M\subset Q_{i_0}^r\setminus{Q_{i_0}^{r+1}}} \frac{\ell_M^{n+2}}{\text{dist}_{p_s}(\ox,Q_m^M)^{n+2}}\\
        &\leq 1+ \sum_{r=j}^{M-1}\sum_{Q_m^M\subset Q_{i_0}^r\setminus{Q_{i_0}^{r+1}}} \frac{\ell_M^{n+2}}{\ell_r^{n+2}} (d+1)^{M-r}d^{(M-r)n}\lesssim 1+\sum_{r=j}^{M-1}\bigg[ \frac{1}{d}\bigg(1+\frac{1}{d}\bigg) \bigg]^{M-r}\lesssim 1.
    \end{align*}
    This finally implies
    \begin{equation*}
        |\Ps_{\mu,\varepsilon}b_i^j(\ox)|\lesssim 1 \quad \text{and} \quad |\pazocal{P}^{s,\ast}_{\mu,\varepsilon}b_i^{j,\ast}(\ox)|\lesssim 1, \qquad \forall \ox\in\mathbb{R}^{n+1} \; \text{and} \; \,0<\varepsilon<\frac{\ell_M}{2}\; \text{uniformly}
    \end{equation*}
    With this, by \cite[Theorem 3.5]{AR} we would get the $L^2(\mu)$-boundedness of $\Ps_{\mu,\varepsilon}$ uniformly on $0<\varepsilon<\frac{\ell_M}{2}$. Now, applying Cotlar's inequality (see for example \cite[Theorem 2.18]{T3}), we would deduce
    \begin{align*}
        |\Ps_{\mu,\varepsilon}b_i^j(\ox)|\lesssim 1 \quad \text{and} \quad |\pazocal{P}^{s,\ast}_{\mu,\varepsilon}b_i^{j,\ast}(\ox)|\lesssim 1, \qquad \forall \ox\in\mathbb{R}^{n+1} \; \text{and} \; \,\varepsilon>0\; \text{uniformly},
    \end{align*}
    and we get the $L^2(\mu)$-boundedness of $\Ps_{\mu}$, and the proof of the lemma is complete.
    \end{proof}
    Thus, in light of Theorems \ref{thm4.1.12} and \ref{thm3.6} we have finally obtained:
    \begin{thm}
		\label{thm5.3}
		Let $(\lambda_j)_j$ be such that $0<\lambda_j\leq \tau_0<1/d$, for every $j$, and denote by $E_{p_s}$ its associated $s$-parabolic Cantor set as in \eqref{eq4.1.1}. Then, there exists $C(n,s,\tau_0)>0$ such that for every generation $k$,
		\begin{equation*}
			\Gamma_{\Theta^s,+}(E_{k,p_s})\leq C \Bigg( \sum_{j=0}^{k} \theta_{j,p_s}^2 \Bigg)^{-1/2}.
		\end{equation*}
        Moreover,
        \begin{equation*}
            C^{-1} \Bigg( \sum_{j=0}^{\infty} \theta_{j,p_s}^2 \Bigg)^{-1/2} \leq \widetilde{\Gamma}_{\Theta^s,+}(E_{p_s}) \leq \Gamma_{\Theta^s,+}(E_{p_s})\leq C \Bigg( \sum_{j=0}^{\infty} \theta_{j,p_s}^2 \Bigg)^{-1/2}.
        \end{equation*}
	\end{thm}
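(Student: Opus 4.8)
The plan is to prove first the finite--generation bound $\Gamma_{\Theta^s,+}(E_{k,p_s})\le C_0\,\Gamma_k(E_{k,p_s})$, with $C_0=C_0(n,s,\tau_0)$, for every $k\ge 1$ by induction on $k$, and then to let $k\to\infty$. For the base case $k=1$ I would combine the elementary estimate $\sigma_1^{-1/2}\gtrsim\lambda_1^{n+1}$ with $\Gamma_{\Theta^s,+}(E_{1,p_s})\le\Gamma_{\Theta^s}(E_{1,p_s})\lesssim\pazocal{H}^{n+1}_{\infty,p_s}(E_{1,p_s})\lesssim\lambda_1^{n+1}$ (from \cite[Theorem 4.2]{H}) and Lemma \ref{lem5.1}. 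For the inductive step, fix $k>1$, take the inductive hypothesis to be exactly assumption \textbf{A2}, and reduce to \textbf{A1}: if there is no $1\le M\le k$ with $\sigma_M\le\sigma_k/2<\sigma_{M+1}$ then $\sigma_k/2<\sigma_1$ and the bound follows at once from \cite[Theorem 4.2]{H} and Lemma \ref{lem5.1}; if such $M$ exists one may additionally assume $\theta_{M+1,p_s}^2\le\sigma_M$, since otherwise the chain $\Gamma_{\Theta^s,+}(E_{k,p_s})\le\Gamma_{\Theta^s,+}(E_{M+1,p_s})\lesssim\pazocal{H}^{n+1}_{\infty,p_s}(E_{M+1,p_s})\lesssim\theta_{M+1,p_s}^{-1}\lesssim\sigma_k^{-1/2}$ already closes the step after enlarging $C_0$. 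This puts us in the setting of \textbf{A1}.

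Then I would split according to whether the comparison of \textbf{A3} holds. In \emph{Case 2} (\textbf{A3} fails), the induction hypothesis applied to the truncated ratios $\lambda_{M+1},\ldots,\lambda_k$ --- whose Cantor set is the $(k-M)$-th generation set $E_{p_s}(\lambda_{M+1},\ldots,\lambda_k)$ --- together with Lemma \ref{lem5.1} gives $\Gamma_{\Theta^s,+}(E_{k,p_s})\lesssim(\sigma_k-\sigma_M)^{-1/2}\lesssim\sigma_k^{-1/2}$, the last step using $\sigma_k\le 2(\sigma_k-\sigma_M)$ from \textbf{A1}; by Lemma \ref{lem5.1} this is $\lesssim\Gamma_k(E_{k,p_s})$, and taking $A_0\ge\sqrt{2}\,C^2$ closes this case. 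In \emph{Case 1} (\textbf{A3} holds), I would set $\mu:=\Gamma_{\Theta^s,+}(E_{k,p_s})\,\mu_M$; Lemma \ref{lem5.2} then gives that $\pazocal{P}^s_\mu$ is bounded on $L^2(\mu)$, so by the definition of $\Gamma_M(E_{M,p_s})$ we get $\Gamma_{\Theta^s,+}(E_{k,p_s})\le\Gamma_M(E_{M,p_s})$, and Lemma \ref{lem5.1} together with \textbf{A1} yields $\Gamma_M(E_{M,p_s})\le C\sigma_M^{-1/2}\lesssim\sigma_k^{-1/2}\approx\Gamma_k(E_{k,p_s})$. This finishes the induction, hence $\Gamma_{\Theta^s,+}(E_{k,p_s})\lesssim\big(\sum_{j=0}^k\theta_{j,p_s}^2\big)^{-1/2}$ for every generation $k$.

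For the assertion about $E_{p_s}$ itself, the left inequality $C^{-1}\big(\sum_{j\ge0}\theta_{j,p_s}^2\big)^{-1/2}\le\widetilde{\Gamma}_{\Theta^s,+}(E_{p_s})$ is Theorem \ref{thm3.6}; the middle one $\widetilde{\Gamma}_{\Theta^s,+}\le\Gamma_{\Theta^s,+}$ is immediate since every measure admissible for $\widetilde{\Gamma}_{\Theta^s,+}$ is admissible for $\Gamma_{\Theta^s,+}$; and the right one follows from monotonicity under inclusion: $E_{p_s}\subset E_{k,p_s}$ gives $\Gamma_{\Theta^s,+}(E_{p_s})\le\Gamma_{\Theta^s,+}(E_{k,p_s})\lesssim\big(\sum_{j=0}^k\theta_{j,p_s}^2\big)^{-1/2}$ for all $k$, and letting $k\to\infty$ finishes the proof (alternatively one routes this through Lemma \ref{lem4.1.13} and Theorem \ref{thm4.1.12}).

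The main obstacle is the engine behind Case 1, namely Lemma \ref{lem5.2}: its proof verifies the hypotheses of the Auscher--Routin local $Tb$ theorem \cite[Theorem 3.5]{AR}, and the delicate part there is constructing the accretive systems $b_i^j,b_i^{j,\ast}$ encoding the temporal-reflection relation between $\nabla_xP_s$ and its conjugate --- but that lemma is already available. The only remaining care needed to assemble Theorem \ref{thm5.3} is bookkeeping: checking that the constants $A_0,C_0,C$ (from Lemma \ref{lem5.1}) and $\widetilde{C}$ (from \cite[Theorem 4.2]{H}) may be taken to depend only on $n,s,\tau_0$, that the self-similar recursion in Case 2 is a genuine instance of the inductive hypothesis, and that the $\theta$-parameters of the truncated Cantor set still satisfy the admissibility constraints imposed above.
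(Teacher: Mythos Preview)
Your proposal is correct and follows essentially the same approach as the paper: the paper's Section \ref{sec6} carries out precisely this induction on $k$ for $\Gamma_{\Theta^s,+}(E_{k,p_s})\le C_0\Gamma_k(E_{k,p_s})$, with the same reductions to \textbf{A1}--\textbf{A3}, the same dichotomy (your Case 1/Case 2), and the same invocation of Lemma \ref{lem5.2} as the engine in Case 1. The final chain of inequalities for $E_{p_s}$ is then assembled exactly as you describe, via Theorems \ref{thm4.1.12} and \ref{thm3.6} together with monotonicity.
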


    \bigskip
    \bigskip

	\vspace{0.75cm}
	{\small
		\begin{tabular}{@{}l}
			\textsc{Joan\ Hernández,} \\ \textsc{Departament de Matem\`{a}tiques, Universitat Aut\`{o}noma de Barcelona,}\\
			\textsc{08193, Bellaterra (Barcelona), Catalonia.}\\
			{\it E-mail address}\,: \href{mailto:joan.hernandez@uab.cat}{\tt{joan.hernandez@uab.cat}}
		\end{tabular}
	}
\end{document}